\renewcommand{\phi}{\varphi}
\title{Asymptotics for semi-discrete entropic optimal transport}
 \author{
         Jason M. Altschuler\footnote{Supported in part by NSF Graduate Research Fellowship 1122374 and a TwoSigma PhD fellowship.}\\
         MIT\\
         \texttt{jasonalt@mit.edu}
         \and
         Jonathan Niles-Weed%
         \footnote{Supported in part by NSF grant DMS-2015291.} \\
         NYU \\
         \texttt{jnw@cims.nyu.edu}
         \and
         Austin J. Stromme\footnote{Supported in part by NDSEG Fellowship F-6749924378.} \\
         MIT \\
         \texttt{astromme@mit.edu}
}
\begin{document}
\maketitle

\begin{abstract}
We compute exact second-order asymptotics for the cost of an optimal solution to the entropic optimal transport problem in the continuous-to-discrete, or \emph{semi-discrete}, setting.
In contrast to the discrete-discrete or continuous-continuous case, we show that the first-order term in this expansion vanishes but the second-order term does not, so that in the semi-discrete setting the difference in cost between the unregularized and regularized solution is \emph{quadratic} in the inverse regularization parameter, with a leading constant that depends explicitly on the value of the density at the points of discontinuity of the optimal unregularized map between the measures.
We develop these results by proving new pointwise convergence rates of the solutions to the dual problem, which may be of independent interest.
\end{abstract}

\section{Introduction}
The entropically regularized optimal transportation problem, originally inspired by a thought experiment of Schr\"odinger~\cite{Sch31} and the subject of a great deal of recent interest in probability~\cite{Fol88,Mik04}, statistics~\cite{RigWee18,GenChiBac19,MenNil19,ChiRouLeg20} and machine learning~\cite{GenPeyCut18,cuturi2013sinkhorn}, is an optimization problem which seeks a coupling between two probability measures that minimizes the transport cost between them, subject to an additional entropic penalty.
Specifically, given Borel probability measures $\mu$ and $\nu$ on $\R^d$ with finite second moment and $\eta > 0$, the problem reads
\begin{equation}\label{eqn:eot}
	\inf_{\pi \in \Pi(\mu, \nu)}  \E_{\pi}[\| x - y \|^2 ] + \frac{1}{\eta} \KL{\pi}{\mu \otimes \nu}\,,
\end{equation}
where $\Pi(\mu, \nu)$ denotes the set of couplings of $\mu$ and $\nu$ and $\KL{\cdot}{\cdot}$ denotes the Kullback--Leibler divergence or relative entropy, defined by
\begin{equation*}
	\KL{\pi}{\rho} := \begin{cases}
		\int \log \frac{d \pi}{d \rho}(x) d \pi(x) & \pi \ll \rho \\
		+ \infty & \text{otherwise.}
	\end{cases}
\end{equation*}

Recent interest in~\eqref{eqn:eot} has been driven by the fact that, as $\eta \to \infty$, the solution $\pi_\eta$ to~\eqref{eqn:eot} approaches the solution $\pi^*$ to the unregularized optimal transport problem~\cite{Leo12,carlier2017convergence},
\begin{equation}\label{eqn:wass}
	\inf_{\pi \in \Pi(\mu, \nu)} \E_\pi [\|x - y\|^2]\,,
\end{equation}
which defines the squared Wasserstein distance $W_2^2(\mu, \nu)$~\cite{villani2008optimal}.
In statistics and machine learning applications, it has been recognized that~\eqref{eqn:eot} represents a computationally and statistically attractive proxy for~\eqref{eqn:wass}.
Statistically, the entropically regularized problem offers improved sample complexity~\cite{GenChiBac19} and cleaner limit laws~\cite{MenNil19} than its unregularized counterpart; computationally, the strict convexity of~\eqref{eqn:eot} opens the door to much faster algorithms~\cite{cuturi2013sinkhorn,altschuler2017near}.

The importance of the $\eta \to \infty$ limit has spurred a line of work which seeks to quantify the speed of convergence of $\pi_\eta \to \pi^*$ and to develop higher-order asymptotics in the $\eta \to \infty$ regime.
Of particular interest is the \emph{suboptimality} of the entropically regularized solution:
$$
\E_{\pi_{\eta}}[\|x - y\|^2] - \E_{\pi^*}[\|x - y\|^2]\,.
$$
This quantity measures the suitability of $\pi_\eta$ as an approximation for $\pi^*$, and giving precise bounds is essential for statistical and computational applications.

Two cases are well understood, with vastly different rates: when $\mu$ and $\nu$ are both finitely supported, then it is known that the difference in cost approaches zero exponentially fast as $\eta \to \infty$~\cite{cominetti1994asymptotic,weed2018explicit}.
On the other hand, when $\mu$ and $\nu$ are absolutely continuous measures with bounded, compactly supported densities, then precise asymptotics to second order are known for the cost \emph{including the entropic term}~\cite{ConTam21,Pal19,ErbMaaRen15,ChiRouLeg20}: as $\eta \to \infty$,
\begin{multline}\label{eqn:second-order}
 \E_{\pi_\eta}[\| x - y \|^2 ] + \frac{1}{\eta} \KL{\pi_\eta}{\mu \otimes \nu} = W_2^2(\mu, \nu) - \frac{d}{2 \eta} \log \left(\frac \pi \eta\right) + \frac{1}{2\eta} (h(\mu) + h(\nu)) \\ + \frac{1}{16 \eta^2} I(\mu, \nu) + o(\eta^{-2})\,, 
\end{multline}
where for a probability measure $\mu$ with density $\mu(\cdot)$ with respect to the Lebesgue measure we write
\begin{equation*}
h(\mu) := - \int \log(\mu(x)) \mu(x) d x
\end{equation*}
for the entropy relative to the Lebesgue measure, and where $I$ is the integrated Fisher information along the Wasserstein geodesic connecting $\mu$ to $\nu$.
It does not seem possible to extract asymptotics for the cost $\E_{\pi_\eta}[\| x - y \|^2 ]$  directly from~\eqref{eqn:second-order}; however, it is easy to show that in general for absolutely continuous $\mu$ and $\nu$, the suboptimality is linear in $\eta^{-1}$.
For example, when $\mu$ and $\nu$ are Gaussian measures on $\R$, it can be checked directly that
\begin{equation*}
\E_{\pi_{\eta}}[\|x - y\|^2] - \E_{\pi^*}[\|x - y\|^2] = \frac{1}{2\eta} + o(\eta^{-1})\,.
\end{equation*}

The large gulf between these convergence rates---exponential for finitely supported measures, linear in $\eta^{-1}$ for absolutely continuous measures---raises the question of which of the two behaviors should be expected in general.
As a first step towards understanding this question, we study a situation between these two extremes: the \emph{semi-discrete} case, in which one measure is absolutely continuous and the other is finitely supported.
This setting is important for both theoretical and practical reasons, but prior work gives no hint of how the suboptimality in the semi-discrete case should behave.
Should one expect to recover the exponential rate or the linear rate?

A simulation in one dimension, where all the quantities are explicit, shows, perhaps surprisingly, that the rate in the semi-discrete case is something else entirely.
Figure~\ref{fig:sttb} plots the suboptimality for two different one-dimensional examples as $\eta$ varies, one where $\mu$ is the Gaussian density, and the other where $\mu$ is the Laplacian density.
For both experiments, we take $\nu$ to be a discrete measure, uniform on $\{-1, +1\}$.
The apparent result is that in both cases, the suboptimality is neither linear nor exponential but \emph{quadratic} in $\eta^{-1}$.
Moreover, the very careful reader will note that the asymptotic suboptimality appears to agree with $\frac{\pi^2\mu(0)}{24}  \eta^{-2}$, where $\mu(0)$ is the value of the density $\mu$ at the origin, which is also the point at which the optimal \emph{unregularized} map from $\mu$ to $\nu$ changes value from $-1$ to $+1$.
We give a full exposition of this example in Section~\ref{sec:sttb}.
\begin{figure}
	\begin{subfigure}{0.45\linewidth} \centering
		\includegraphics[scale=0.38]{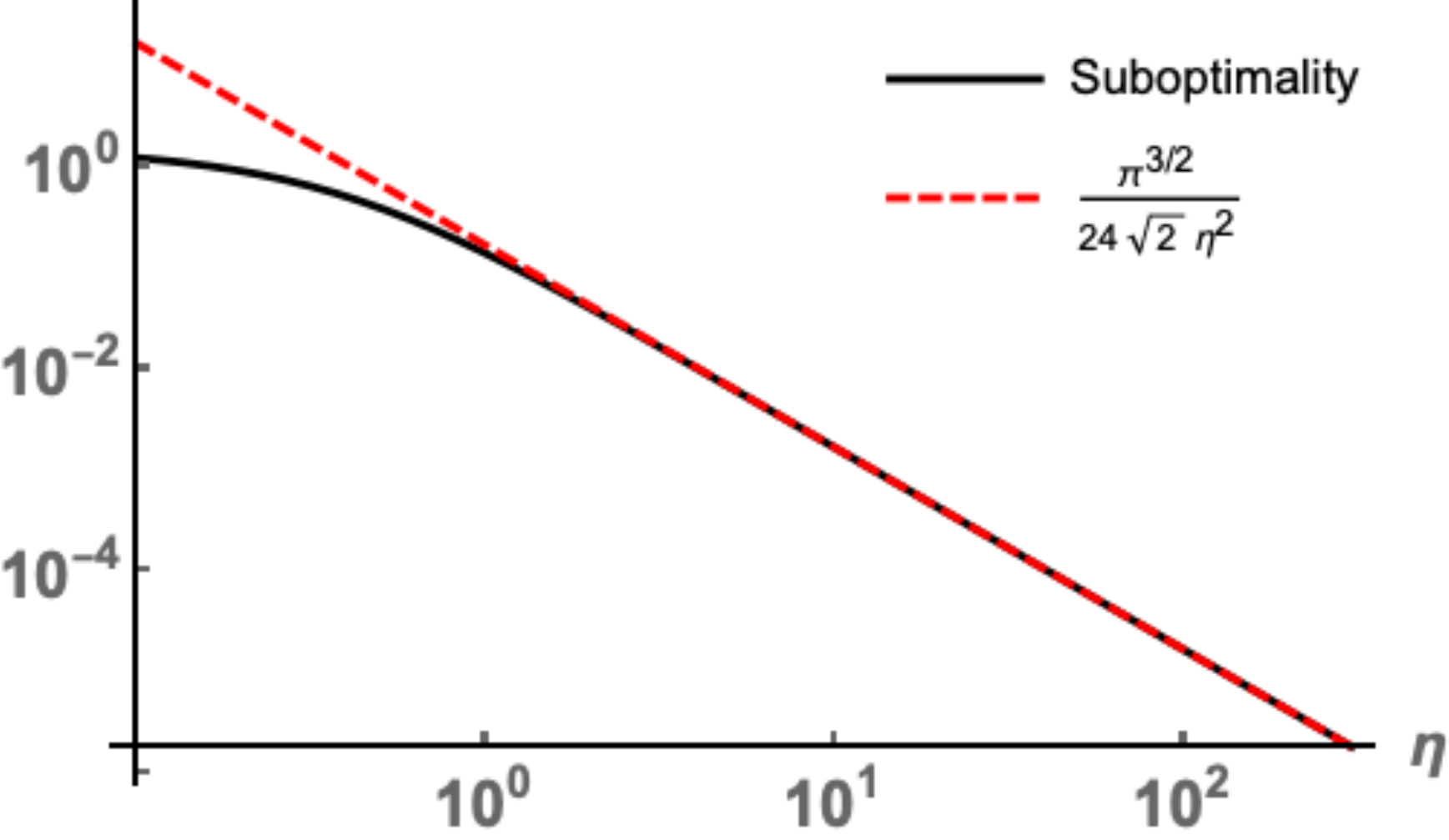}
		\caption{$\mu$ is standard Gaussian distribution, i.e., has density $\mu(x) = e^{-x^2/2}/\sqrt{2\pi}$.}
		\label{sttb:gaussian}
	\end{subfigure}
	\hfill
	\begin{subfigure}{0.45\linewidth} \centering
		\includegraphics[scale=0.38]{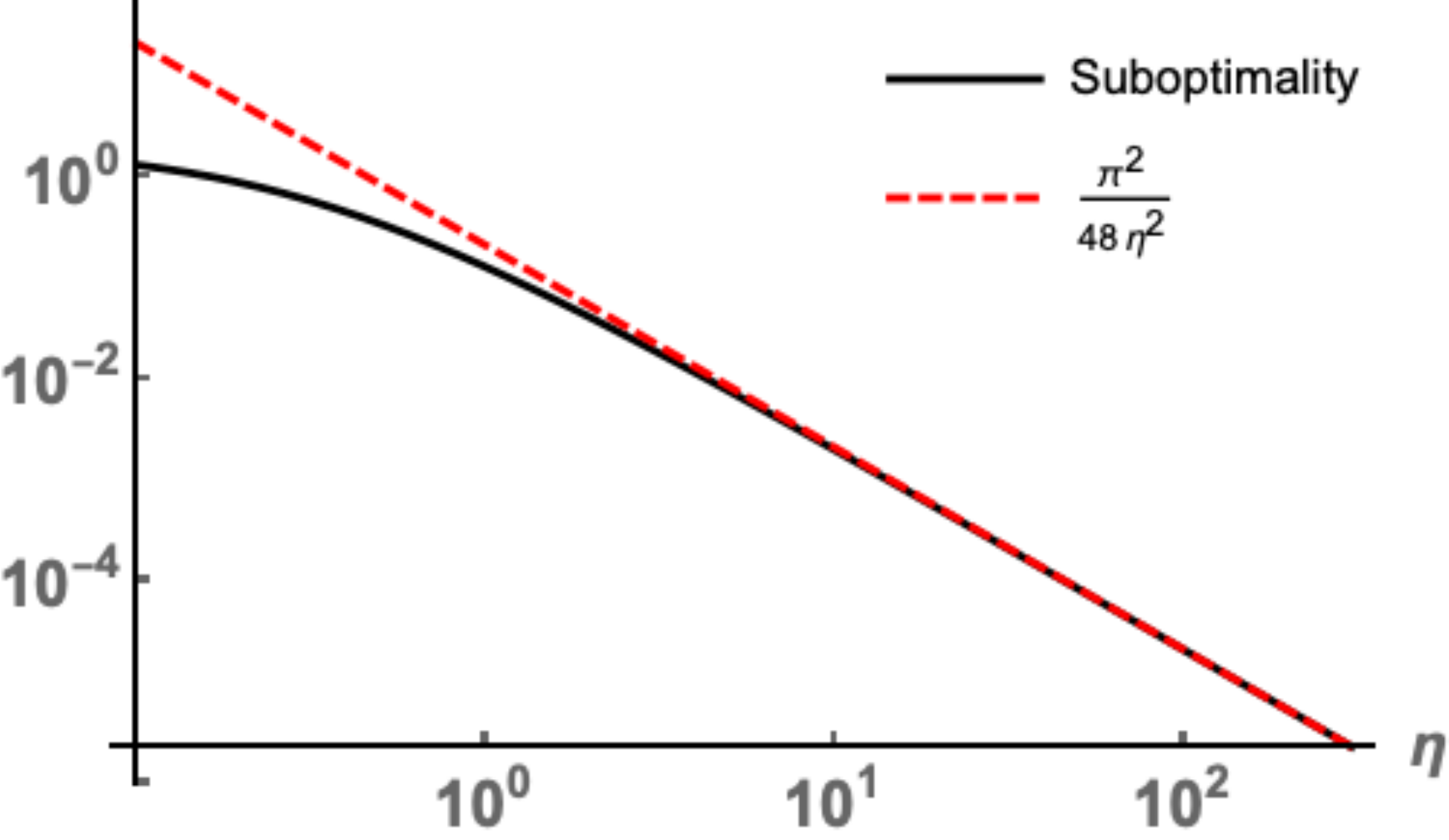}
		\caption{$\mu$ is standard Laplacian distribution, i.e., has density $\mu(x) = e^{-|x|}/2$.}
		\label{sttb:laplacian}
	\end{subfigure}
	\caption{
		For two toy examples in one dimension, simulations show that the suboptimality scales quadratically in $\eta^{-1}$, and that the leading constant is an explicit function of the value of the density at $0$.
		Our main result, Theorem~\ref{thm:subopt_limit}, extends this to the general setting. The agreement between the predicted limiting value and the simulation is precise.
	}
	\label{fig:sttb}
\end{figure}

Our main theorem shows that this phenomenon is completely general: in any dimension, if $\nu$ is discrete and $\mu$ has sufficiently regular density with respect to the Lebesgue measure, then the suboptimality scales as $\eta^{-2}$, with leading constant given by the value of $\mu$'s density on the hyperplanes on which the optimal map changes value.

\begin{theorem}\label{thm:subopt_limit}
	Suppose $\mu$ and $\nu$ are Borel probability measures on $\R^d$ such that $\nu$ is finitely supported on $y_1, \ldots, y_n$, and $\mu$ is absolutely continuous and compactly supported, with positive, continuous density on the interior of its connected support. 
	Then
	\begin{equation}\label{eqn:suboptimality_limit}
	\E_{\pi_\eta}[\|x - y\|^2] = W_2^2(\mu, \nu) + \frac{\zeta(2)}{2\eta^2} \sum_{i < j} \frac{w_{ij}}{\|y_i - y_j\|} + o(\eta^{-2})\,,
\end{equation}
where $w_{ij}$ is the $(d-1)$-dimensional integral of $\mu(x)$ on $\overline{T^{-1}(y_i)} \cap \overline{T^{-1}(y_j)}$ for the optimal map $T$ transporting $\mu$ to $\nu$ (see~\cref{ssec:prelim:geo}), and where $\zeta(2) = \frac{\pi^2}{6}$.
\end{theorem}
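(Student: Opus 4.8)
\medskip\noindent\emph{Proof plan.}
The plan is to reduce the suboptimality to a single weighted integral against $\mu$, and then to read off its $\eta^{-2}$ asymptotics from a boundary-layer analysis near the facets of the power-cell decomposition of $\mathrm{supp}\,\mu$ induced by $T$. First I would record the standard semi-discrete structure: the optimal entropic plan $\pi_\eta$ has $x$-conditionals supported on $\{y_1,\dots,y_n\}$ with masses $p^\eta_i(x)=\nu_i e^{\eta(g^\eta_i-\|x-y_i\|^2)}\big/\sum_j\nu_j e^{\eta(g^\eta_j-\|x-y_j\|^2)}$, where $g^\eta\in\R^n$ maximizes the finite-dimensional concave semi-discrete dual (call it $D_\eta$); as $\eta\to\infty$, $g^\eta\to g^*$ (a Kantorovich potential for~\eqref{eqn:wass}) and $p^\eta_i\to\mathbf 1_{T^{-1}(y_i)}$, with $T^{-1}(y_i)=\{x:\|x-y_i\|^2-g^*_i\le\|x-y_j\|^2-g^*_j\ \forall j\}$. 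Setting $f^*(x):=\min_j(\|x-y_j\|^2-g^*_j)$ and $\delta_i(x):=(\|x-y_i\|^2-g^*_i)-f^*(x)\ge0$ (which vanishes exactly on $T^{-1}(y_i)$), I would prove the \emph{exact} identity
\begin{equation*}
\E_{\pi_\eta}[\|x-y\|^2]-W_2^2(\mu,\nu)=\int\sum_{i=1}^n\delta_i(x)\,p^\eta_i(x)\,d\mu(x)
\end{equation*}
by writing $\|x-y_i\|^2=\delta_i(x)+g^*_i+f^*(x)$ and using $\sum_ip^\eta_i\equiv1$, $\int p^\eta_i\,d\mu=\nu_i$ (both marginals of $\pi_\eta$ are fixed), and $\mu(T^{-1}(y_i))=\nu_i$ to see that the $f^*$ and $g^*$ pieces cancel. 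Morally: fixing the marginals annihilates the Kantorovich contribution, and what is left is exactly the cost of the mass that the soft map $p^\eta$ leaks across cell boundaries.

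Next I would localize this integral near the skeleton $\bigcup_{i<j}F_{ij}$, $F_{ij}:=\overline{T^{-1}(y_i)}\cap\overline{T^{-1}(y_j)}$: away from it, $\delta_i p^\eta_i$ is exponentially small, and the locus where $\ge3$ cells meet has dimension $\le d-2$ and contributes $O(\eta^{-3})$. Near a facet $F_{ij}$, writing $x=z+t\,n_{ij}$ with $z\in F_{ij}$ and $n_{ij}:=(y_i-y_j)/\|y_i-y_j\|$, affineness of $\|x-y_j\|^2-\|x-y_i\|^2$ gives $\delta_j(z+tn_{ij})=2\|y_i-y_j\|\,t$ for $t>0$ and $\delta_i(z-sn_{ij})=2\|y_i-y_j\|\,s$ for $s>0$, exactly, with the other gaps staying bounded below; hence, up to exponentially small terms, the only leaked mass is $p^\eta_j(z+tn_{ij})=(\rho^\eta_{ij}\,e^{2\eta\|y_i-y_j\|t}+1)^{-1}$ with $\rho^\eta_{ij}:=\tfrac{\nu_i}{\nu_j}\exp\{\eta[(g^\eta_i-g^*_i)-(g^\eta_j-g^*_j)]\}$. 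Freezing $\mu(z\pm tn_{ij})\approx\mu(z)$ (uniform continuity of $\mu$ suffices, which is all that is assumed) and rescaling $u=2\eta\|y_i-y_j\|\,t$, the contribution of $F_{ij}$ becomes
\begin{equation*}
\frac{w_{ij}}{2\eta^2\|y_i-y_j\|}\big(\psi(\rho^\eta_{ij})+\psi(1/\rho^\eta_{ij})\big)+o(\eta^{-2}),\qquad\psi(\rho):=\int_0^\infty\frac{u}{\rho e^u+1}\,du .
\end{equation*}

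To land on the stated constant it then suffices to show $\rho^\eta_{ij}\to1$ for every adjacent pair, for then $\psi(\rho^\eta_{ij})+\psi(1/\rho^\eta_{ij})\to2\psi(1)=2\int_0^\infty u/(e^u+1)\,du=\pi^2/6=\zeta(2)$ and summing over $i<j$ yields~\eqref{eqn:suboptimality_limit}. The claim $\rho^\eta_{ij}\to1$ is precisely the sharp pointwise convergence rate for the dual potentials promised in the introduction, in the normalization $\eta[(g^\eta_i-g^*_i)-(g^\eta_j-g^*_j)]\to\log(\nu_j/\nu_i)$, and I would establish it in two stages. First, a uniform-in-$\eta$ strong-concavity bound for $D_\eta$ near $g^*$ — whose limiting Hessian is a weighted Laplacian of the cell-adjacency graph (weights proportional to $w_{ij}/\|y_i-y_j\|$), nondegenerate on the mean-zero subspace provided that graph is connected — together with $\|\nabla D_\eta(g^*)\|=O(\eta^{-1})$ (again by localization), gives the crude rate $\|g^\eta-g^*\|=O(\eta^{-1})$, so the $\rho^\eta_{ij}$ stay in a fixed compact subset of $(0,\infty)$. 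Second, expanding the optimality condition $\int p^\eta_i\,d\mu=\nu_i$ across the facets exactly as above gives $\sum_j\tfrac{w_{ij}}{2\|y_i-y_j\|}\log\rho^\eta_{ij}=O(\eta^{-1})$ for each $i$; since $\log\rho^\eta_{ij}$ is a potential difference $b^\eta_i-b^\eta_j$, this is a graph-Laplacian system forcing $b^\eta_i-b^\eta_j\to0$, i.e.\ $\rho^\eta_{ij}\to1$, by connectedness.

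The hard part is this last step. A merely qualitative $g^\eta\to g^*$ is useless here: one needs the $O(\eta^{-1})$ rate just to keep the transition profiles $(\rho^\eta_{ij}e^{2\eta\|y_i-y_j\|t}+1)^{-1}$ from degenerating to step functions or spreading out, and then needs the first-order optimality relation to center them, so that $\rho^\eta_{ij}\to1$ and the universal constant $\psi(1)=\pi^2/12$ emerges. I expect the technical work to be a genuinely uniform-in-$\eta$ lower bound on the Hessian of $D_\eta$ in a neighborhood of $g^*$, and the careful matching of the inner (scale-$\eta^{-1}$ boundary layer) and outer (cell-interior, exponentially accurate) expansions used both to estimate $\nabla D_\eta$, $\nabla^2D_\eta$ and to evaluate the final integral, with all remainders controlled at order $o(\eta^{-2})$ under only continuity of $\mu$. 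By comparison, the reduction of the first paragraph and the single exact integral $\int_0^\infty u/(e^u+1)\,du=\pi^2/12$ are routine.
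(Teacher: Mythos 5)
Your proposal is correct in substance and reaches the right constant, and its first and second stages coincide with the paper's skeleton: your exact identity $\E_{\pi_\eta}[\|x-y\|^2]-W_2^2(\mu,\nu)=\int\sum_i\delta_i p_i^\eta\,d\mu$ is precisely Lemma~\ref{lem:nonneg_decomp} combined with~\eqref{coupling_with_d}, and your boundary-layer evaluation of the facet integrals (freeze $\mu$ on the facet, rescale $u=2\eta\|y_i-y_j\|t$, discard codimension-$2$ corners and exponential tails) is Lemma~\ref{lem:subopt_int2} together with \cref{log_int_subopt} and \cref{continuous}. Where you genuinely diverge is in proving the key input $\rho_{ij}^\eta\to 1$ (equivalently, \cref{dual_convergence}: $\eta(g_\eta-g^*)\to 0$ after renormalization). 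You propose an implicit-function-style argument: a uniform-in-$\eta$ strong-concavity bound on the finite-dimensional dual near $g^*$ plus $\|\nabla D_\eta(g^*)\|=O(\eta^{-1})$ to get the crude rate, then a boundary-layer expansion of the stationarity conditions yielding a weighted graph-Laplacian system $\sum_j\frac{w_{ij}}{2\|y_i-y_j\|}\log\rho_{ij}^\eta\to 0$, forced to zero by connectedness. The paper instead argues variationally (Proposition~\ref{prop:dual}): it identifies $\eta(g_\eta-g^*)$ as the minimizer of an auxiliary convex functional $\Phi(\eta)$ whose value is the first-order entropic-cost gap, upper-bounds $\eta\Phi(\eta)$ by plugging in $d=0$ (Lemma~\ref{lem:comparison_limit}), proves boundedness via the inequality $\log(1+ab)\ge\log(1+a)\log(1+b)$ and connectedness (Proposition~\ref{dual_bounded}), and then extracts a quadratic lower bound from the dilogarithm inversion formula $-\Li_2(-e^s)-\Li_2(-e^{-s})=\zeta(2)+\tfrac12 s^2$, which pins the subsequential limit to zero. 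The two routes rest on the same pillars (facet regularity of $\mu$ and connectivity of the adjacency graph, Lemma~\ref{lem:connected}), but the paper's energy comparison sidesteps the uniform Hessian lower bound you correctly flag as your hardest remaining lemma, and it yields \cref{thm:cost_limit} as a free by-product via~\eqref{phi_limit}; your route is closer to a classical matched-asymptotics/Newton argument and would, if the Hessian uniformity is carried out, give the quantitative rate $\|g^\eta-g^*\|=O(\eta^{-1})$ with explicit constants, which the paper does not state. One small correction: the stationarity expansion gives $\sum_j\frac{w_{ij}}{2\|y_i-y_j\|}\log\rho_{ij}^\eta=o(1)$ rather than $O(\eta^{-1})$ under mere continuity of $\mu$, but $o(1)$ is all your Laplacian argument needs.
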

See \cref{sec:quadratic_sub_opt} for a precise statement and proof of this result.
The assumption that $\mu$ is compactly supported is mostly for convenience and can be substantially weakened; see \cref{continuous} and \cref{compact}.
By contrast, the continuity and positivity of $\mu$ are essential: in the absence of these assumptions, the convergence rate is no faster than $O(\eta^{-1})$ in general.

As an intermediate result, we also obtain an exact second-order expression for the cost with the entropic term. In what follows, we write $H(\nu) = -\sum_{i=1}^n \nu_i \log \nu_i$ to denote the Shannon entry of a discrete distribution $\nu$ with weights $\nu_1, \dots, \nu_n$ on its atoms.
\begin{theorem}\label{thm:cost_limit}
Suppose $\mu, \nu$ are as in Theorem~\ref{thm:subopt_limit}.
Then
\begin{equation}\label{cost_entropic_difference}
\E_{\pi_\eta}[\|x - y\|^2] + \frac 1 \eta \KL{\pi_\eta}{\mu \otimes \nu}  = W_2^2(\mu, \nu) + \frac{1}{\eta} H(\nu) - \frac{\zeta(2)}{2 \eta^2} \sum_{i< j} \frac{w_{ij}}{\|y_i - y_j\|} + o(\eta^{-2})\,.
\end{equation}
\end{theorem}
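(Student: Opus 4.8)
The plan is to pass to the semi-dual, anchor everything at the unregularized Kantorovich potential, and read off the two corrections from (i) a Laplace‑type boundary‑layer expansion of the regularized dual objective evaluated at the \emph{unregularized} potential, and (ii) the suboptimality incurred by that potential in the regularized dual. A dilogarithm identity produces the $\zeta(2)$, and the remaining entropy‑dependent $\eta^{-2}$ terms cancel between (i) and (ii).

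\textbf{Reduction to the semi-dual.} By entropic Kantorovich duality, and eliminating the continuous potential in favour of its marginal‑matching value, the left‑hand side of \eqref{cost_entropic_difference} equals $\sup_{g\in\R^n}D_\eta(g)$, where, writing $c_i(x):=\|x-y_i\|^2$,
\[ D_\eta(g)\;=\;\sum_i\nu_i g_i\;-\;\frac1\eta\int\log\Big(\sum_i\nu_i e^{\eta(g_i-c_i(x))}\Big)\,d\mu(x)\,. \]
This is concave, invariant under $g\mapsto g+t\mathbf 1$, and is maximized at the Schrödinger potential $g_\eta$. Let $g^*$ be an optimal potential for the unregularized problem, normalized so that $\sum_i\nu_i g^*_i=\sum_i\nu_i g_{\eta,i}$; then $T^{-1}(y_i)$ is the Laguerre cell $\{x:c_i-g^*_i\le c_k-g^*_k\ \forall k\}$, $\mu(T^{-1}(y_i))=\nu_i$, and $W_2^2(\mu,\nu)=\int\min_i(c_i-g^*_i)\,d\mu+\sum_i\nu_i g^*_i$. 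I decompose
\[ \E_{\pi_\eta}[\|x-y\|^2]+\tfrac1\eta\KL{\pi_\eta}{\mu\otimes\nu}\;=\;D_\eta(g_\eta)\;=\;D_\eta(g^*)\;+\;\big(D_\eta(g_\eta)-D_\eta(g^*)\big)\,. \]

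\textbf{Expansion of $D_\eta(g^*)$.} With $\delta_i(x):=(c_i(x)-g^*_i)-\min_k(c_k(x)-g^*_k)\ge 0$ (vanishing exactly on $T^{-1}(y_i)$) one has $D_\eta(g^*)-W_2^2=-\tfrac1\eta\int\log\big(\sum_i\nu_i e^{-\eta\delta_i(x)}\big)\,d\mu$. In the interior of each cell all but one $\delta_i$ is bounded below, so the integrand is $-\tfrac1\eta\log\nu_i+O(e^{-c\eta})$, and integrating over the cells gives the first‑order term $-\tfrac1\eta\sum_i\nu_i\log\nu_i=\tfrac1\eta H(\nu)$. The $\eta^{-2}$ correction comes from $O(1/\eta)$‑thick slabs around the $(d-1)$‑faces $F_{ij}:=\overline{T^{-1}(y_i)}\cap\overline{T^{-1}(y_j)}$: since the cells are polytopes, $(c_i-g^*_i)-(c_j-g^*_j)$ is affine with gradient of norm $2\|y_i-y_j\|$, so in the signed normal coordinate $s$ (positive toward $y_j$) one has $\delta_j=0,\ \delta_i\approx 2\|y_i-y_j\|s$ for $s>0$ (symmetrically for $s<0$), with all other $\delta_k$ exponentially negligible. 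Subtracting the bulk values and integrating over $s\in\R$ gives, per face, $\tfrac1\eta\cdot\tfrac{1}{2\eta\|y_i-y_j\|}\big(\mathrm{Li}_2(-a)+\mathrm{Li}_2(-a^{-1})\big)$ with $a=\nu_i/\nu_j$, using $\int_0^\infty\log(1+ae^{-u})\,du=-\mathrm{Li}_2(-a)$; the inversion identity $\mathrm{Li}_2(-a)+\mathrm{Li}_2(-a^{-1})=-\zeta(2)-\tfrac12\log^2 a$ turns this into $-\tfrac{1}{2\eta^2\|y_i-y_j\|}\big(\zeta(2)+\tfrac12(\log\nu_i-\log\nu_j)^2\big)$. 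Weighting by $w_{ij}=\int_{F_{ij}}\mu\,d\mathcal H^{d-1}$ (the variation of $\mu$ across the slab contributes only $O(\eta^{-3})$) and summing,
\[ D_\eta(g^*)\;=\;W_2^2+\frac{H(\nu)}{\eta}-\frac{1}{2\eta^2}\sum_{i<j}\frac{w_{ij}}{\|y_i-y_j\|}\Big(\zeta(2)+\tfrac12(\log\nu_i-\log\nu_j)^2\Big)+o(\eta^{-2})\,. \]

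\textbf{Dual suboptimality and conclusion.} Here I use the pointwise dual‑convergence estimates developed earlier: $\|g_\eta-g^*\|=O(\eta^{-1})$, and in fact $g_\eta-g^*=-\tfrac1\eta\log\nu+\text{const}\cdot\mathbf 1+o(\eta^{-1})$, obtained from $\nabla_{g_j}D_\eta(g^*)=\mu(T^{-1}(y_j))-\int\sigma_j\,d\mu=-\tfrac1\eta\sum_{i\ne j}\tfrac{w_{ij}}{2\|y_i-y_j\|}(\log\nu_j-\log\nu_i)+o(\eta^{-1})$ (same boundary‑layer computation, $\sigma_j$ the softmin weight) together with $-\nabla^2 D_\eta(g^*)\to M:=\sum_{i<j}\tfrac{w_{ij}}{2\|y_i-y_j\|}(e_i-e_j)(e_i-e_j)^\top$ (again boundary‑layer: $\eta\int_{\mathrm{slab}}\sigma_i\sigma_j\,d\mu\to\tfrac{w_{ij}}{2\|y_i-y_j\|}$), uniformly on $B(g^*,O(\eta^{-1}))$; connectedness of $\mathrm{supp}\,\mu$ gives $\ker M=\mathrm{span}(\mathbf 1)$. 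Since $\nabla D_\eta(g_\eta)=0$, an exact second‑order Taylor expansion gives $D_\eta(g_\eta)-D_\eta(g^*)=\tfrac12(g_\eta-g^*)^\top\big(-\nabla^2 D_\eta(\xi)\big)(g_\eta-g^*)$ for some $\xi$ on the segment; letting $\eta\to\infty$ and using $M\mathbf 1=0$ so the constant drops out,
\[ D_\eta(g_\eta)-D_\eta(g^*)\;=\;\frac{1}{2\eta^2}(\log\nu)^\top M(\log\nu)+o(\eta^{-2})\;=\;\frac{1}{4\eta^2}\sum_{i<j}\frac{w_{ij}}{\|y_i-y_j\|}(\log\nu_i-\log\nu_j)^2+o(\eta^{-2})\,. \]
Adding the two expansions, the $\tfrac12(\log\nu_i-\log\nu_j)^2$ corrections cancel exactly, yielding \eqref{cost_entropic_difference}.

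\textbf{Where the difficulty lies.} I expect the technical heart to be the rigorous form of the boundary‑layer analysis: localizing $\int\log(\sum_i\nu_i e^{-\eta\delta_i})\,d\mu$ to a union of $O(1/\eta)$‑thick slabs around the top‑dimensional faces and showing that everything else — in particular the regions near the $(d-2)$‑skeleton of the Laguerre diagram and near $\partial\,\mathrm{supp}\,\mu$, of measure $O(\eta^{-2})$ with $O(\eta^{-1})$ integrand — is $o(\eta^{-2})$, together with establishing the $\eta^{-1}$‑rate with identified constant for $g_\eta-g^*$ and the Hessian convergence $-\nabla^2 D_\eta(g^*)\to M$. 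This is precisely where the continuity and positivity of $\mu$ (and the geometry recalled in \cref{ssec:prelim:geo}) enter, and it is the content one needs to borrow from the pointwise dual‑convergence theory; the algebraic fact that $\zeta(2)$ survives while the entropy‑dependent $\eta^{-2}$ terms cancel is the dilogarithm identity combined with the leading dual shift being $g_\eta-g^*\approx-\tfrac1\eta\log\nu$ modulo constants.
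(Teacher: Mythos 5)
Your proposal is correct in outline and reaches the right answer, but it takes a genuinely different route from the paper. The paper first changes the reference measure from $\mu\otimes\nu$ to $\mu\otimes\rho$ ($\rho$ the counting measure), which extracts the term $\frac1\eta H(\nu)$ \emph{exactly} via \eqref{ent_shift} and renormalizes the dual potentials so that $\eta(g_\eta-g^*)\to0$ (\cref{dual_convergence}); the $\eta^{-2}$ coefficient then falls out of the limit \eqref{phi_limit} of $\eta\Phi(\eta)$, obtained by sandwiching the auxiliary problem \eqref{d_eq} between its value at $d=0$ (\cref{lem:comparison_limit}) and a strong-convexity lower bound, so no second-order dual-gap computation is ever needed. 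You instead keep $\mu\otimes\nu$ as reference, where the first-order dual shift is $-\frac1\eta\log\nu$ rather than zero; consequently both $D_\eta(g^*)$ and the gap $D_\eta(g_\eta)-D_\eta(g^*)$ carry entropy-dependent $\eta^{-2}$ contributions ($-\frac14(\log\nu_i-\log\nu_j)^2$ and $+\frac14(\log\nu_i-\log\nu_j)^2$ per face) that you must show cancel exactly. I checked the algebra: the boundary-layer value of $D_\eta(g^*)$, the dilogarithm inversion, the gradient $\nabla_{g_j}D_\eta(g^*)=-\frac1\eta(M\log\nu)_j+o(\eta^{-1})$, the Hessian limit $M$, and the cancellation are all consistent, and your $g_\eta-g^*\approx-\frac1\eta\log\nu$ is precisely the paper's $\eta(g_\eta-g^*)\to0$ rewritten in the $\nu$-referenced gauge. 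What your route buys is a transparent Laplace-method picture and the explicit first-order dual shift; what it costs is an extra Newton-step analysis and an exact cancellation that the paper's change of measure renders automatic.

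Two caveats you should repair. First, the claim that the variation of $\mu$ across the slab contributes $O(\eta^{-3})$ is too strong under mere continuity of $\mu$; it is only $o(\eta^{-2})$, which suffices but is what Assumption~\ref{continuous} (continuity of $t\mapsto h_{ij}(t;a)$) actually delivers. Second, the Taylor/Newton step is circular as written: you invoke Hessian convergence ``uniformly on $B(g^*,O(\eta^{-1}))$'' to locate $g_\eta$, but you have not yet shown $g_\eta$ lies in such a ball. You need an a priori localization --- either the qualitative convergence $g_\eta\to g^*$ (cf.\ \cite{Nutz2021}) followed by a bootstrap, or an argument in the spirit of the paper's \cref{dual_bounded}, which is exactly the role that proposition plays there. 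With those two points filled in, the proof goes through.
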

It would be interesting to find a heuristic argument to relate~\eqref{cost_entropic_difference} to~\eqref{eqn:second-order}.
In any case, the fact that the right side is $O(\eta^{-1})$ rather than $O(\eta^{-1} \log \eta)$ is a manifestation of the fact that the unregularized optimal coupling $\pi^*$ has finite relative entropy with respect to the product measure $\mu \otimes \nu$~\cite{nutz-notes}.

Prior work on the asymptotics of entropically regularized optimal transport has exploited a dynamical formulation~\cite{GenLeoRip17, GigTam20, CheGeoPav16} analogous to the celebrated Benamou--Brenier formula from the theory of unregularized optimal transport~\cite{BenBre00}.
However, to our knowledge, there is no rigorous formulation of such a principle for the semi-discrete setting.
We therefore take a different approach, similar in spirit to the one employed in the analysis of the discrete problem~\cite{cominetti1994asymptotic}, which focuses on the convex dual of~\eqref{eqn:eot}.
However, our proof techniques depart substantially from those available in the discrete case, where finite-dimensional considerations make the analysis of the dual problem more tractable.

Our main technical result, which is of possible independent interest, gives first-order asymptotics for the convergence of solutions of the convex dual of~\eqref{eqn:eot} to solutions of the dual of~\eqref{eqn:wass}, showing that this convergence happens faster than $\eta^{-1}$.

\begin{theorem}\label{thm:first_order_dual_convergence}
	Suppose $\mu, \nu$ are as in Theorem~\ref{thm:subopt_limit}.
	Let $(f_\eta, g_\eta)$ and $(f^*, g^*)$ solve the dual problems to \eqref{eqn:eot} and \eqref{eqn:wass}, respectively with appropriate normalization constraints.
	(See Definitions~\ref{defn:unreg_potential_convention} and \ref{defn:reg_potential_convention}.)
	Then
	\begin{align*}
		\eta(f_\eta - f^*) & \to 0 \\
		\eta(g_{\eta}  - g^*) &\to 0
	\end{align*}
	pointwise, with the latter convergence uniform.
\end{theorem}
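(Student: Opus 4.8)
The plan is to work with the convex dual of \eqref{eqn:eot}. Recall that in the semi-discrete setting, the dual variable $f_\eta$ is determined by $g_\eta$ through the smoothed $c$-transform: for $\mu$-a.e.\ $x$,
\begin{equation*}
	f_\eta(x) = -\frac{1}{\eta}\log\sum_{j=1}^n \nu_j \exp\bigl(\eta(g_\eta(j) - \|x - y_j\|^2)\bigr)\,,
\end{equation*}
while $g^*(j)$ and $f^*(x) = \min_j(\|x - y_j\|^2 - g^*(j))$ satisfy the analogous unregularized relations, with $f^*$ the $c$-conjugate of the finite-dimensional vector $g^*$. Because $\nu$ is finitely supported, both optimizations over the Kantorovich potential for $\nu$ reduce to finite-dimensional concave maximization problems (a log-sum-exp smoothing of a piecewise-linear max), and the first step is to record optimality conditions for each: $g_\eta$ is the unique maximizer (up to the normalization in Definition~\ref{defn:reg_potential_convention}) of a smooth strictly concave function $\Phi_\eta$ on $\R^n$, and $g^*$ the maximizer of its $\eta = \infty$ limit $\Phi_\infty$. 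The key structural fact, which I would establish first, is that $\Phi_\infty$ is \emph{not} strictly concave but is nondegenerate in the directions that matter: its Hessian at $g^*$ is controlled by the transversal densities, and the mass-balance constraint $\int_{T^{-1}(y_i)} \mu = \nu_i$ pins down $g^*$ uniquely given the nondegeneracy hypotheses on $\mu$ (positive continuous density, connected support). This uses the geometric description of the optimal map $T$ from \cref{ssec:prelim:geo}: the cells $T^{-1}(y_i)$ are (intersections with $\mathrm{supp}\,\mu$ of) the Laguerre cells determined by $g^*$.

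Next I would estimate $\nabla\Phi_\eta(g^*)$, i.e.\ how far $g^*$ is from being stationary for the regularized problem. This is the heart of the matter. Writing $\nabla_i\Phi_\eta(g) = \nu_i - \int \sigma_{\eta,i}(x,g)\,d\mu(x)$ where $\sigma_{\eta,i}(x,g)$ is the softmax weight of atom $i$ at $x$, and using $\int_{T^{-1}(y_i)}d\mu = \nu_i$, we get
\begin{equation*}
	\nabla_i\Phi_\eta(g^*) = \int_{T^{-1}(y_i)}\!\bigl(1 - \sigma_{\eta,i}(x,g^*)\bigr)d\mu(x) \;-\; \int_{T^{-1}(y_i)^c}\!\sigma_{\eta,i}(x,g^*)\,d\mu(x)\,.
\end{equation*}
Far from the cell boundaries the softmax concentrates on the correct atom exponentially fast in $\eta$, so the integrand is exponentially small there; the contribution is concentrated in an $O(\eta^{-1})$-neighborhood of the boundary hyperplanes $\overline{T^{-1}(y_i)}\cap\overline{T^{-1}(y_j)}$. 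A local computation in the normal direction — rescaling $t \mapsto \eta t$ and using continuity of $\mu$ to freeze the density at its boundary value — shows each such boundary contributes something of order $\eta^{-1}$, but with a sign/symmetry structure: the leading $\eta^{-1}$ terms from the two sides of a shared face cancel when one sums appropriately against the constraint directions, leaving $\nabla\Phi_\eta(g^*) = o(\eta^{-1})$ in the relevant quotient. (This cancellation is exactly the phenomenon responsible for the vanishing first-order term advertised in the abstract, so it should be expected; making it rigorous, with uniform control of the error as $\eta\to\infty$ and uniform-in-$x$ tail bounds away from the boundary, is the main obstacle.) Compactness of $\mathrm{supp}\,\mu$ and the strict positivity of the density are used precisely here to get clean uniform bounds.

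Finally I would combine the gradient estimate with a quantitative inverse-function / strong-convexity argument to pass from $\nabla\Phi_\eta(g^*) = o(\eta^{-1})$ to $g_\eta - g^* = o(\eta^{-1})$, hence $\eta(g_\eta - g^*)\to 0$, and the bounds are uniform since everything lives in finite dimensions and $g_\eta$ stays in a fixed compact set. The subtlety is that $\Phi_\infty$ degenerates, so one cannot simply invoke uniform strong convexity of the limiting problem; instead one uses that for each fixed large $\eta$ the function $\Phi_\eta$ is strictly concave with a Hessian that, near $g^*$, is bounded below on the constraint-complement directions by a constant independent of $\eta$ (this again reduces to the transversal-density nondegeneracy, now at the level of the second variation of the log-sum-exp smoothing), together with a barrier/coercivity estimate ruling out the degenerate directions. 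Once $\eta(g_\eta - g^*)\to 0$ uniformly, the convergence $\eta(f_\eta - f^*)\to 0$ pointwise follows by plugging $g_\eta \to g^*$ into the smoothed $c$-transform formula above and a direct Laplace-type estimate: for $\mu$-a.e.\ fixed $x$ in the interior of a cell $T^{-1}(y_i)$, the log-sum-exp is dominated by atom $i$ with exponentially small corrections, so $\eta(f_\eta(x) - f^*(x)) = \eta(g^*(i) - g_\eta(i)) + O(e^{-c\eta}) \to 0$; on the measure-zero boundary set no claim is needed.
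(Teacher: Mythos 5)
Your architecture is genuinely different from the paper's. You reduce to the finite-dimensional semi-dual in $g$, estimate the gradient $\nabla\Phi_\eta(g^*)$ by a boundary-layer computation, and close with a uniform local strong-concavity (inverse function) argument. The paper instead never touches the gradient: it shows $\eta(g_\eta-g^*)$ solves a rescaled variational problem (Proposition~\ref{prop:dual}), upper-bounds its value by plugging in $d=0$, lower-bounds it along subsequential limits via the dilogarithm inversion formula $-\Li_2(-e^{u})-\Li_2(-e^{-u})=\zeta(2)+\tfrac12 u^2$, and squeezes; the quadratic term in that identity supplies exactly the ``limiting strong convexity'' you are after, while boundedness of $d_\eta$ comes from an elementary inequality $\log(1+ab)\ge\log(1+a)\log(1+b)$ rather than a Hessian bound. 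Both routes ultimately rest on the same two facts you identify: $h_{ij}(0)=h_{ji}(0)$ across each shared face, and connectedness of the graph $\{(i,j):h_{ij}(0)>0\}$ (Lemma~\ref{lem:connected}). Your route is viable and arguably more quantitative, but the uniform Hessian lower bound on a fixed neighborhood of $g^*$ and the uniform tail estimates are precisely where Assumption~\ref{continuous} must be invoked, and they are sketched rather than proved.

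There is, however, one concrete error that would derail the heart of your argument (step 2). You write the smoothed $c$-transform with weights $\nu_j$ inside the log-sum-exp, i.e.\ you normalize the potentials relative to $\mu\otimes\nu$. The theorem's potentials (Definition~\ref{defn:reg_potential_convention}, via~\eqref{eqn:sd_sink_dual} and~\eqref{eq:dual_opt}) are normalized relative to $\mu\otimes\rho$ with $\rho$ the counting measure; the two conventions differ by the shift $g(y_j)\mapsto g(y_j)+\tfrac1\eta\log\nu_j$, which is $\Theta(\eta^{-1})$, not $o(\eta^{-1})$. With your convention the boundary-layer contributions to $\nabla_i\Phi_\eta(g^*)$ from the two sides of the face $\overline{S_i}\cap\overline{S_j}$ are, to leading order, $\frac{h_{ij}(0)}{2\eta\|y_i-y_j\|}\log(1+\nu_j/\nu_i)$ and $\frac{h_{ji}(0)}{2\eta\|y_i-y_j\|}\log(1+\nu_i/\nu_j)$, which do \emph{not} cancel unless $\nu_i=\nu_j$; the advertised cancellation and hence $\nabla\Phi_\eta(g^*)=o(\eta^{-1})$ fail for non-uniform $\nu$, and your $g_\eta$ would converge to $g^*$ only at rate exactly $\eta^{-1}$. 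With the counting-measure convention both contributions carry the same constant $\log 2$ and cancel by $h_{ij}(0)=h_{ji}(0)$, as required. So you must set up the semi-dual with the counting measure (equivalently, absorb $\tfrac1\eta\log\nu_j$ into $g$ and re-center) before the rest of your plan can go through.
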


\subsection{Related work}
The study of optimal transport dates back to the fundamental contributions of Monge in the 18th century~\cite{monge1781} and Kantorovich in the 20th~\cite{kantorovich1960mathematical}.
Later in the 20th century, significant progress was
made on the qualitative nature of
optimal transport solutions, with many independent
discoveries of a fundamental characterization of optimal transport solutions (Theorem~\ref{thm:fundamental_thm_ot}) ~\cite{brenier1987decomposition,knott1984optimal,cullen1984extended,cuesta1989notes,ruschendorf1990characterization}.
Around the turn of the 21st century, it was recognized that
optimal transport gives a deep geometric perspective on the space of probability distributions~\cite{mccann1997convexity, otto2001geometry}.
This discovery led to new functional inequalities, stable notions of curvature for metric measure spaces, and especially new means of analyzing difficult PDEs~\cite{otto2000generalization,sturm2006geometry, lott2009ricci, desvillettes2001trend,desvillettes2005trend}.

In parallel to these theoretical developments, major effort was devoted to practical
algorithms for computing optimal transport maps, particularly
in the discrete-discrete case.
Standard linear programming methods
work quite effectively when the supports of each distribution
are discrete with up to several thousand atoms~\cite{Dong2020,cuturi2013sinkhorn}.
However, for larger datasets linear programming methods
become prohibitively slow, and approximations are required.
The entropic regularization approach is the most popular approximation,
first considered algorithmically by Sinkhorn~\cite{sinkhorn1964relationship}
and Sinkhorn and Knopp~\cite{sinkhorn1967concerning} in the 1960s.
These works gave fast algorithms based off iterative matrix scaling
for computing the approximate optimal coupling.
Cuturi introduced this work to the machine learning community in 2013~\cite{cuturi2013sinkhorn},
which led to an explosion of interest in optimal transport for applications~\cite{computational_optimal_transport}.
Subsequently, the entropic penalty has been applied to variants of the optimal transport problem, where it also leads to fast and practical algorithms~\cite{ChiPeySch18,BenIjzRuk20,altschuler2020approximating,benamou2015iterative}.

Apart from its algorithmic implications, the entropic penalty has an interesting probabilistic interpretation dating back to Schr\"odinger.
In Schr\"odinger's original motivation, \eqref{eqn:eot} represents a formalization of the following \emph{hot gas experiment}.
Consider a collection of particles evolving according to Brownian motion, and suppose their initial and final distribution approximately coincide with the measures $\mu$ and $\nu$, respectively.
Schro\"dinger asked for a description of the ``most likely paths'' of each particle.
The entropically regularized optimal transport problem gives a way of making mathematical sense of this problem: the path measure governing the evolution of the particles can be obtained by convolving the optimal coupling $\pi_\eta$ given by the solution to~\eqref{eqn:eot} with a Brownian bridge~\cite{Fol88}.
This interpretation has led to a fruitful line of work understanding \eqref{eqn:eot} through the lens of large-deviations principles, which also has helped to clarify the nature of the convergence of~\eqref{eqn:eot} to~\eqref{eqn:wass} as $\eta \to \infty$~\cite{Leo12}.

Obtaining an asymptotic expansion of the cost $\E_{\pi_\eta}[\|x - y\|^2]$ or the entropic cost $\E_{\pi_\eta}[\|x - y\|^2] + \frac 1 \eta \KL{\pi}{\mu \otimes \nu}$ in the $\eta \to \infty$ limit is the subject of a great deal of recent interest.
In the discrete-discrete case, this question was first investigated in the broader context of entropically regularized linear programs by Cominetti and San Mart\'in~\cite{cominetti1994asymptotic}, who showed that the suboptimality converges to zero exponentially fast as $\eta \to \infty$.

In the continuous-continuous case, asymptotics have been computed to second order for the entropic cost, under regularity assumptions (see~\cite{ConTam21} and references therein).
To our knowledge, however, no general asymptotics for the suboptimality (without the entropic term) are known, but examples---such as the Gaussian case mentioned above---show that the rate $\Theta(\eta^{-1})$ is typical.

Recently, Bernton et al.~\cite{Bernton2021} developed a structural characterization of $\pi_\eta$ which allows them to establish a large-deviations principle for the convergence of $\pi_\eta$ to $\pi^*$, but they do not extract asymptotics for the cost.
Our results in Section~\ref{sec:prelim} develop a similar structural characterization for semi-discrete couplings by a direct argument.

The semi-discrete case, which is the central focus of this work, is important both for theoretical and practical reasons.
For instance, it reflects the practical situation of the statistician who has access to an empirical distribution $\nu = \frac 1n \sum_{i=1}^n \delta_{X_i}$ of samples from  an unknown measure, and wishes to compare these samples to an absolutely continuous reference measure $\mu$.
From a theoretical perspective, the semi-discrete setting is closely connected to the \emph{optimal quantization} problem~\cite{DerSchSch13, GraLus00, Pol82}, which seeks the best approximation of an absolutely continuous measure by a measure with finite support.
The study of the structure of optimal couplings for semi-discrete problems has a long history in computational geometry, where such couplings are known as ``power diagrams''~\cite{aurenhammer1987power,aurenhammer1998minkowski}.
We draw extensively on the properties of such diagrams in our geometrical results of Section~\ref{sec:prelim}.

\subsection{Organization of the remainder of the paper}
In \cref{sec:prelim}, we formalize several important definitions and establish some basic geometrical results on the structure of the optimal regularized and unregularized couplings. To illustrate our ideas, in \cref{sec:sttb} we develop the one-dimensional example mentioned above, and give a preview of the argument that will follow in the general case.
\Cref{sec:dual} contains the proof of our main technical result, \cref{thm:first_order_dual_convergence}, which is at the heart of our arguments.
In \cref{sec:quadratic_sub_opt}, we apply this convergence result to prove \cref{thm:subopt_limit}.
Finally, \cref{sec:appendix} contains necessary background information on the dilogarithm and zeta functions, as well as several intermediate integration lemmas needed for the proofs of our main theorems.
It also contains the proofs of two technical results from \cref{sec:prelim}.

\section{Background on semi-discrete OT and Sinkhorn problems}\label{sec:prelim}

In this section we recall relevant background on semi-discrete OT and Sinkhorn problems,
as well as provide several useful propositions and intuitions for the work that comes.
For further background we refer the reader to the standard textbooks~\cite{computational_optimal_transport,villani2008optimal},
as well as to the detailed treatment of the semi-discrete setting in~\cite[Section 4]{merigot2020optimal}.

\subsection{Semi-discrete optimal transport}
The foundational observation in optimal transport theory declares the existence, uniqueness,
and structure of the optimal coupling in the transport problem.
\begin{theorem}
	\label{thm:fundamental_thm_ot} Suppose $\mu, \nu$
	are probability measures with finite second moment. Then there is an optimal coupling
	$\pi^* \in \Pi(\mu, \nu)$ such that
	$$
	W_2^2(\mu, \nu) = \E_{\pi^*}[\|x - y\|^2].
	$$  Moreover, we have the following form of strong duality:
	\begin{equation}\label{eqn:w2_duality}
		W_2^2(\mu, \nu) = \sup_{(f, g) \in L^1(\mu) \times L^1(\nu)
		\colon f + g \leq \|x - y\|^2} \E_{\mu}
		\left[f\right]
		+  \E_{\nu}\left[g\right].
	\end{equation}
	If $\mu$ has a density with respect to the Lebesgue measure, then in fact
	there is a unique optimal
	$\pi^*$, it is supported on the graph of a function
	$T \colon \R^d \to \R^d$, and $T$ is the gradient of a (proper, lower semi-continuous)
	convex function.
	We shall usually write $T = T_{\mu \to \nu} = \nabla \phi_{\mu \to \nu}$.
	In this case the supremum in the dual problem~\eqref{eqn:w2_duality}
	is attained by
	$$
	(f, g) = (\|x\|^2 - 2\phi_{\mu \to \nu}, \|y\|^2 - 2 \phi_{\mu \to \nu}^c)
	$$ where we are using the Legendre conjugate
	$$
	\phi^c_{\mu \to \nu} (y) := \sup_x \langle x , y \rangle - \phi_{\mu \to \nu}(x).
	$$

\end{theorem}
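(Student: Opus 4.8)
The plan is to assemble the statement from four classical ingredients: existence by the direct method of the calculus of variations, Kantorovich duality, Rockafellar's theorem on cyclically monotone sets, and the Fenchel--Young inequality. I would organize the argument in that order.

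First, for existence of $\pi^*$: the set $\Pi(\mu,\nu)$ is tight (its elements have the fixed, hence tight, marginals $\mu,\nu$) and weakly closed, so it is weakly sequentially compact by Prokhorov's theorem. Since $(x,y)\mapsto\|x-y\|^2$ is nonnegative and lower semicontinuous, $\pi\mapsto\E_\pi[\|x-y\|^2]$ is weakly lower semicontinuous, and it is finite on $\Pi(\mu,\nu)$ because the product coupling $\mu\otimes\nu$ has finite cost under the second-moment assumption. A minimizing sequence therefore has a subsequential weak limit attaining the infimum. For the duality identity \eqref{eqn:w2_duality}, I would reduce to a bilinear cost: writing $\|x-y\|^2=\|x\|^2-2\langle x,y\rangle+\|y\|^2$ and noting that $\E_\pi[\|x\|^2]=\E_\mu[\|x\|^2]$ and $\E_\pi[\|y\|^2]=\E_\nu[\|y\|^2]$ are constant over $\Pi(\mu,\nu)$, minimizing $\E_\pi[\|x-y\|^2]$ is the same as maximizing $\E_\pi[\langle x,y\rangle]$. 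Then I would invoke Fenchel--Rockafellar duality (or a Sion minimax swap justified by weak compactness of $\Pi(\mu,\nu)$), which gives
\begin{equation*}
\sup_{\pi\in\Pi(\mu,\nu)}\E_\pi[\langle x,y\rangle]=\inf_{u(x)+v(y)\ge\langle x,y\rangle}\E_\mu[u]+\E_\nu[v]\,,
\end{equation*}
and translating back via $f=\|x\|^2-2u$, $g=\|y\|^2-2v$ yields \eqref{eqn:w2_duality}; the restriction to $L^1(\mu)\times L^1(\nu)$ is handled by passing to $c$-conjugate pairs, which are automatically integrable under the finite-second-moment hypothesis.

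Next, for the structure when $\mu$ is absolutely continuous: the support of \emph{any} optimal $\pi^*$ is $c$-cyclically monotone for $c(x,y)=-\langle x,y\rangle$ --- otherwise a finite cyclic permutation of points in $\mathrm{supp}(\pi^*)$ strictly lowers the cost, and a gluing/disintegration argument produces a strictly cheaper competitor in $\Pi(\mu,\nu)$, a contradiction. For this cost, $c$-cyclical monotonicity coincides with ordinary cyclical monotonicity, so Rockafellar's theorem supplies a proper lower semicontinuous convex function $\phi=\phi_{\mu\to\nu}$ with $\mathrm{supp}(\pi^*)$ contained in the graph of $\partial\phi$. Since $\phi$ is convex and finite on a set of full $\mu$-measure while the nondifferentiability set of a convex function is Lebesgue-negligible, absolute continuity of $\mu$ forces $\phi$ to be differentiable $\mu$-a.e.; hence $\pi^*$ is concentrated on the graph of $T:=\nabla\phi$, so $\pi^*=(\mathrm{id},T)_\#\mu$ and $T_\#\mu=\nu$. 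Because a coupling concentrated on the graph of a Borel map with first marginal $\mu$ is uniquely determined, the optimal coupling is unique. Finally, for the explicit potentials, set $f(x)=\|x\|^2-2\phi_{\mu\to\nu}(x)$ and $g(y)=\|y\|^2-2\phi^c_{\mu\to\nu}(y)$: the Fenchel--Young inequality $\phi(x)+\phi^c(y)\ge\langle x,y\rangle$ gives $f(x)+g(y)\le\|x-y\|^2$ (feasibility), while on $\mathrm{supp}(\pi^*)$ one has $y=\nabla\phi(x)$, where Fenchel--Young is an equality, so $f(x)+g(y)=\|x-y\|^2$ $\pi^*$-a.e.; integrating and using weak duality shows $(f,g)$ attains the supremum, with integrability against $\mu$ and $\nu$ following from the second-moment bounds and $\langle x,y\rangle\le\phi(x)+\phi^c(y)$ applied along the coupling.

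The main obstacle is the structural step: simultaneously proving $c$-cyclical monotonicity of $\mathrm{supp}(\pi^*)$ (the competitor construction needs care with measurability and with couplings that are not finitely supported), invoking Rockafellar's theorem to obtain a genuine convex potential rather than merely a monotone set, and then upgrading "concentrated on a cyclically monotone set" to "concentrated on the graph of $\nabla\phi$" via a.e. differentiability of convex functions --- precisely where absolute continuity of $\mu$ is indispensable. By contrast, existence, the duality identity, and the verification of the explicit potentials are routine once this structural fact is in place.
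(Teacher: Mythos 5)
The paper does not actually prove this theorem: it is stated as classical background (Brenier's theorem together with Kantorovich duality for the quadratic cost) and attributed to the references given in the introduction, so there is no in-paper argument to compare yours against. Your sketch is the standard textbook proof and is essentially correct: direct method for existence, reduction to the bilinear cost and Fenchel--Rockafellar (or minimax) for the duality identity, cyclical monotonicity of the support plus Rockafellar's theorem to produce the convex potential, a.e.\ differentiability of convex functions combined with absolute continuity of $\mu$ to upgrade to a map, and Fenchel--Young to verify the explicit dual pair. One step as written does not close: from ``every optimal plan is concentrated on the graph of the gradient of \emph{some} convex function'' you cannot immediately conclude uniqueness, since two optimal plans could a priori be induced by different maps. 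The standard patch is either to use convexity of the set of optimal plans --- the average of two optimal plans is optimal, hence graph-induced by the same argument, which forces the two maps to coincide $\mu$-a.e.\ --- or to fix a single optimal dual pair and note that every optimal plan is supported in the contact set $\{f(x)+g(y)=\|x-y\|^2\}$, which lies in the graph of a single $\nabla\phi$ up to a $\mu$-null set. With that one-line repair, your outline matches the classical argument the paper is implicitly invoking.
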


The optimal $f$ and $g$ are typically not unique. However, the following assumptions guarantee that, up to an additive shift, $f$ and $g$ are unique $\mu$ (respectively, $\nu$) almost surely~\cite{BarGonLou21,Bernton2021}.

\begin{assume}\label{assume}
The measure $\nu$ is finitely supported and $\mu$ is absolutely continuous with finite second moment.
The interior of the support of $\mu$ is connected, the boundary of the support has zero Lebesgue measure, and $\mu$ has positive density on the interior of its support.
\end{assume}

Under Assumption~\ref{assume}, we can therefore uniquely identify a pair of optimal dual solutions.
\begin{defn}[Optimal unregularized potentials]\label{defn:unreg_potential_convention}
We denote by $(f^*, g^*)$ optimal solutions to~\eqref{eqn:w2_duality} subject to the additional normalization constraint that $\E_{\nu}[g^*] = 0$.
\end{defn}

Using Theorem~\ref{thm:fundamental_thm_ot}, we can completely characterize the optimal transport maps
in the semi-discrete case. In what follows, we identify $\mu$ with its Lebesgue density $\mu(\cdot)$, and write $\{y_i\}_{i = 1}^n$ for the support of $\nu$.

\begin{theorem}[\cite{aurenhammer1998minkowski}]\label{thm:dual-opt}
Adopt Assumption~\ref{assume}.
Then, $\mu$-almost surely,
	$$
	T_{\mu \to \nu}(x)  = \argmin_{y_i \in \supp(\nu)}(\|x - y_i\|^2 - g^*(y_i)).
	$$
\end{theorem}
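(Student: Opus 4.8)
The plan is to read the characterization off from strong duality together with complementary slackness, using the explicit form of the optimal potentials recorded in \cref{thm:fundamental_thm_ot}. Fix the (unique) optimal coupling $\pi^*$, which by \cref{thm:fundamental_thm_ot} is carried by the graph of $T = T_{\mu\to\nu}$, and fix the optimal dual pair $(f^*, g^*)$ of \cref{defn:unreg_potential_convention}. Since $\nu$ is finitely supported, $g^*$ is automatically a genuine function on $\{y_1,\dots,y_n\}$. By the explicit description in \cref{thm:fundamental_thm_ot} (up to the additive shift that enforces $\E_\nu[g^*]=0$), we may write $f^*(x) + g^*(y) = \|x\|^2 + \|y\|^2 - 2\phi_{\mu\to\nu}(x) - 2\phi^c_{\mu\to\nu}(y)$ with $\phi_{\mu\to\nu}$ convex; in particular the dual constraint $f^*(x) + g^*(y) \le \|x-y\|^2$ holds for \emph{every} $(x,y)$, being nothing but the Fenchel--Young inequality $\phi_{\mu\to\nu}(x) + \phi^c_{\mu\to\nu}(y) \ge \langle x, y\rangle$.

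Next I would invoke complementary slackness. The function $(x,y)\mapsto \|x-y\|^2 - f^*(x) - g^*(y)$ is nonnegative and integrates to $W_2^2(\mu,\nu) - \E_\mu[f^*] - \E_\nu[g^*] = 0$ against $\pi^*$, hence vanishes $\pi^*$-almost surely; since $\pi^*$ lives on the graph of $T$, this gives $f^*(x) + g^*(T(x)) = \|x - T(x)\|^2$ for $\mu$-a.e.\ $x$. Combining with the pointwise bound $f^*(x) + g^*(y_i) \le \|x - y_i\|^2$ at each atom, we obtain, for $\mu$-a.e.\ $x$, the chain
$$\|x - T(x)\|^2 - g^*(T(x)) = f^*(x) \le \min_{1 \le i \le n}\big(\|x - y_i\|^2 - g^*(y_i)\big) \le \|x - T(x)\|^2 - g^*(T(x)),$$
so every inequality is an equality and $T(x)$ attains the minimum, i.e.\ $T(x) \in \argmin_{y_i \in \supp(\nu)}(\|x - y_i\|^2 - g^*(y_i))$.

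It remains to upgrade membership in the $\argmin$ to equality with it, which amounts to showing the minimizer is $\mu$-a.s.\ unique. For $i \neq j$, the tie locus $\{x : \|x - y_i\|^2 - g^*(y_i) = \|x - y_j\|^2 - g^*(y_j)\}$ is, once the $\|x\|^2$ terms cancel, an affine hyperplane in $\R^d$; there are finitely many such hyperplanes, each Lebesgue-null, so their union is Lebesgue-null and therefore $\mu$-null because $\mu$ is absolutely continuous. Off this null set the $\argmin$ is a singleton, which by the previous step must equal $T(x)$, proving the claim. The normalization $\E_\nu[g^*]=0$ is immaterial here: adding a constant to $g^*$ shifts every $\|x-y_i\|^2 - g^*(y_i)$ by the same amount and leaves the $\argmin$ unchanged; it is imposed only so that ``$g^*$'' denotes a definite function.

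I do not expect a genuine obstacle. The only points needing care are (i) using the explicit potentials so that the constraint $f^* + g^* \le \|x-y\|^2$ holds \emph{pointwise} rather than merely $\mu\otimes\nu$-a.e., which is what allows evaluation at each atom $y_i$; and (ii) the elementary measure-zero argument for uniqueness of the minimizer, which is exactly where absolute continuity of $\mu$ enters.
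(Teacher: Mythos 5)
Your proof is correct and rests on the same two pillars as the paper's: Fenchel--Young equality (which you phrase as complementary slackness for $(f^*,g^*)$, the paper as the biconjugate representation $\phi=(\phi^c)^c$ with $\nabla\phi(x)$ equal to the unique maximizer) and the observation that the tie loci are finitely many hyperplanes, hence $\mu$-null by absolute continuity. The only presentational difference is that the paper works directly with $\phi_{\mu\to\nu}$ and its conjugate while you work with the Kantorovich potentials; these are equivalent, so no further comment is needed.
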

\begin{proof} For ease of notation, write $\phi := \phi_{\mu\to \nu}$.
	Since $\phi$ is convex and closed, we know that $\phi = (\phi^c)^c$, where $(\cdot)^c$ denotes
	Legendre conjugation. Therefore,
	$$
	\phi(x) = \max_{y_i} \langle  x, y_i \rangle - \phi^c(y_i).
	$$
	Since $\mu$ is absolutely continuous, there is a unique maximizer for $\mu$-almost every $x$, and if $y_i$ is the unique maximizer for such an $x$, then $\nabla \phi(x) = y_i$, and 
	$$
	\|x - y_i\|^2 - \|y_i\|^2 + 2 \phi^c(y_i) < \|x - y_j \|^2 - \|y_j\|^2 + 2\phi^c(y_j) \quad \forall j \neq i\,.
	$$ 
	Therefore we have shown that $\mu$-almost everywhere,
	$$
	T(x) = \argmin_{y_i}( \|x - y_i\|^2 - (\|y_i\|^2 - 2\phi^c(y_i)) ).
	$$ This yields the result by the characterization in 
	Theorem~\ref{thm:fundamental_thm_ot}.
\end{proof}
In view of this result, the next definition is natural.

\begin{defn}[\cite{aurenhammer1987power}]\label{defn:power_cell} We define the \emph{power cells} with respect to the optimal dual potential $g^*$ by
	$$
	S_i := \{x \in \R^d \colon \forall j \, \, \|x - y_i\|^2 - g^*(y_i) \leq \|x - y_j \|^2 - g^*(y_j)\}, \quad i = 1, 
	\ldots, n.
	$$\end{defn}

\begin{center}
	\begin{figure}
		\centering
		\includegraphics[scale=.9]{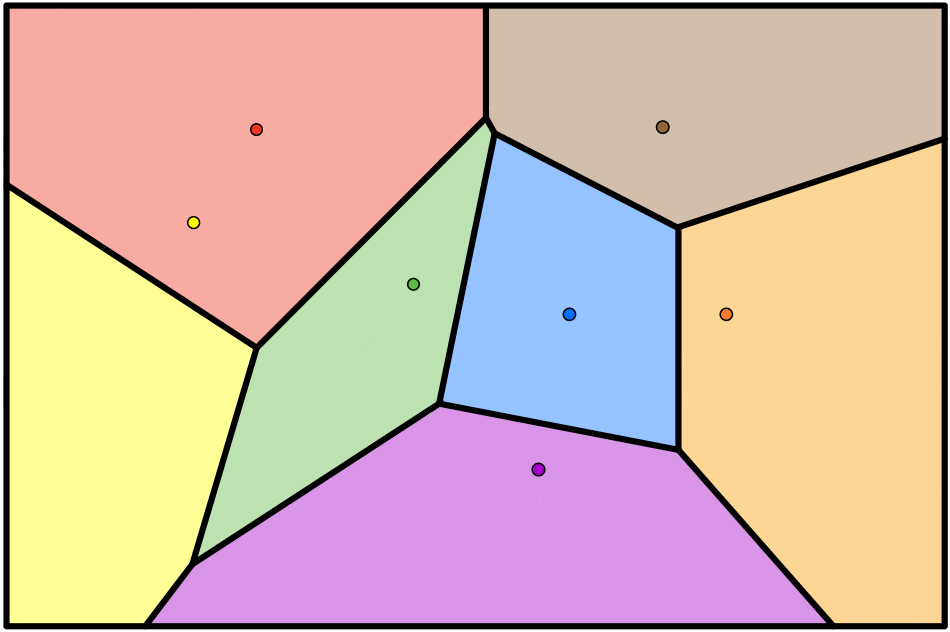}
		\caption{Illustration of a power cell diagram, or equivalently the optimal coupling for a semi-discrete OT problem. }
		\label{fig:power_cell}
	\end{figure}
\end{center}

The significance of the power cells $S_i$ is that they are precisely the pull-back of $y_i$ under 
$T_{\mu \to \nu}$:
$$
S_i = T^{-1}_{\mu \to \nu}(y_i).
$$
The power cells for $\pi^*$ form a convex polyhedral partition
of $\R^d$.
In Figure~\ref{fig:power_cell} we show an example of an optimal
mapping between a measure on the larger rectangle and a finitely
supported measure. Note that a point $y_i$ in the support of $\nu$ can lie in the power cell $S_j$ corresponding to a different point $y_j \neq y_i$. For example, this occurs if $\mu$ is supported on $(-\infty ,-2]$ and $\nu = (1/2) \delta_{-1} + (1/2) \delta_1$.

\subsection{Semi-discrete entropic optimal transport}

In this subsection, we discuss the entropy regularized version of the semi-discrete
optimal transport problem.
Denote by $\rho$ the counting measure on the support of $\nu$.
We first note that for any $\pi \in \Pi(\mu, \nu)$, we have
\begin{equation}\label{ent_shift}
\KL{\pi}{\mu \otimes \nu} =\KL{\pi}{\mu \otimes \rho} + H(\nu)\,.
\end{equation}
The regularized optimal transport problem~\eqref{eqn:eot} is therefore equivalent to
\begin{equation}\label{eq:eot_counting}
\inf_{\pi \in \Pi(\mu, \nu)} \E_{\pi}[\|x - y\|^2] + \frac{1}{\eta}\KL{\pi}{\mu \otimes \rho}\,.
\end{equation}
The benefit of the formulation~\eqref{eq:eot_counting} is that under Assumption~\ref{assume},
\begin{equation*}
\KL{\pi^*}{\mu \otimes \rho} = 0\,,
\end{equation*}
which leads to a simplification in some of the formulas appearing in what follows.

Csisz\'ar's theory of ``I-projection''~\cite{Csi75} implies that as long as $\mu$ and $\nu$ have finite second moment, the value of~\eqref{eq:eot_counting} equals the value of the dual problem
\begin{align}\label{eqn:sd_sink_dual}
	\sup_{(f,g) \in L^1(\mu) \times L^1(\nu)}
	\E_{\mu}[f] + \E_{\nu}[g] - \frac{1}{\eta}\sum_{j = 1}^n
	\int_{\R^d} e^{-\eta(\|x - y_j\|^2 - f(x) - g(y_j))} \mu(x)dx + \frac 1 \eta\,.
\end{align}
Moreover, the optimal solution to~\eqref{eq:eot_counting} satisfies
\begin{equation}\label{eq:dual_opt}
\frac{d\pi_{\eta}}{d (\mu \otimes \rho)}(x, y) = e^{-\eta(\|x - y\|^2 - f_{\eta}(x) - g_{\eta}(y))}\,,
\end{equation}
where $f_\eta$ and $g_\eta$ solve~\eqref{eqn:sd_sink_dual}.

The strict convexity of~\eqref{eqn:sd_sink_dual} implies that $f_\eta$ and $g_\eta$ are unique up to an additive shift; as above, we therefore fix a unique optimal pair by adding an additional constraint.

\begin{defn}[Optimal regularized potentials] \label{defn:reg_potential_convention}
We denote by $(f_{\eta}, g_{\eta})$ solutions to~\eqref{eqn:sd_sink_dual}, subject to the additional normalization constraint $\E_{\nu}[g_{\eta}] = 0$.
\end{defn}

\subsection{Useful geometric notions}\label{ssec:prelim:geo}
The power cell decomposition of Definition~\ref{defn:power_cell} gives us a useful way to separate the subproblems arising in our proof into individual problems over the cells $S_i$.
In the service of analyzing these problems, we will focus on the distance of a point in $x \in S_i$, from each of the hyperplanes defining $S_i$.
We call these quantities the \emph{slacks}, in reference to the fact that they represent the slack in the dual feasibility constraints in~\eqref{eqn:w2_duality}.

\begin{defn}[Slack]\label{defn:slack}
    Let $i,j \in [n]$. The $j$-th slack at point $x \in S_i$ is
\begin{equation}\label{eqn:deltaij}
	\Delta_{ij}(x):= \| x - y_j\|^2 - f^*(x) - g^*(y_j).
\end{equation}
\end{defn}

We establish several basic properties of this slack operator.

\begin{lemma}[Properties of slack]\label{lem:slack_properties} 
    For $i,j \in [n]$ and $x \in S_i$,
    \begin{itemize}
        \item \underline{Nonnegativity.} $\Delta_{ij}(x) \geq 0$, with strict inequality $\mu$-almost everywhere if $i \neq j$.
        \item \underline{Diagonals vanish.} $\Delta_{ij}(x) = 0$ if $i=j$.
        \item \underline{Expression via $g^*$.}
		$\Delta_{ij}(x) = 2 \langle x, y_i - y_j \rangle - \|y_i\|^2 + \|y_j\|^2 - g^*(y_j) + g^*(y_i)$.
	  \end{itemize}
\end{lemma}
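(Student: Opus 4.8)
The plan is to derive all three properties from a single pointwise identity for $f^*$ on the power cell $S_i$. First I would observe that, under the normalization of Definition~\ref{defn:unreg_potential_convention}, the potential $f^*$ can be taken to be the $c$-transform of $g^*$:
\[
	f^*(x) = \min_{j \in [n]} \bigl(\|x - y_j\|^2 - g^*(y_j)\bigr) \qquad \text{for every } x \in \R^d.
\]
Indeed, if $(f, g^*)$ is feasible for~\eqref{eqn:w2_duality} then $f(x) \le \|x - y_j\|^2 - g^*(y_j)$ for each $j$, so replacing $f$ by the right-hand minimum (a continuous function growing like $\|x\|^2$, hence in $L^1(\mu)$ since $\mu$ has finite second moment) preserves feasibility and the constraint $\E_\nu[g^*] = 0$ while not decreasing $\E_\mu[f]$; thus this choice is optimal, and I would fix it as the representative of $f^*$. (Equivalently, this formula can be read off from $f^* = \|x\|^2 - 2\phi_{\mu\to\nu}$ in Theorem~\ref{thm:fundamental_thm_ot} together with the description of $\phi_{\mu\to\nu}$ as a finite maximum of affine functions used in the proof of Theorem~\ref{thm:dual-opt}.) Since the atoms $y_1, \dots, y_n$ are distinct, Definition~\ref{defn:power_cell} says exactly that for $x \in S_i$ the above minimum is attained at $j = i$, so
\[
	f^*(x) = \|x - y_i\|^2 - g^*(y_i) \qquad \text{for all } x \in S_i.
\]

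From this identity the three claims would follow by direct computation. Setting $j = i$ in~\eqref{eqn:deltaij} and substituting gives $\Delta_{ii}(x) = \|x-y_i\|^2 - f^*(x) - g^*(y_i) = 0$, which is the vanishing of the diagonal. Substituting the identity into~\eqref{eqn:deltaij} for general $j$ and expanding $\|x-y_j\|^2 - \|x-y_i\|^2 = 2\langle x, y_i - y_j\rangle + \|y_j\|^2 - \|y_i\|^2$ yields $\Delta_{ij}(x) = 2\langle x, y_i - y_j\rangle - \|y_i\|^2 + \|y_j\|^2 - g^*(y_j) + g^*(y_i)$, the stated expression via $g^*$. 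Finally, the same substitution rewrites $\Delta_{ij}(x) = \bigl(\|x - y_j\|^2 - g^*(y_j)\bigr) - \bigl(\|x - y_i\|^2 - g^*(y_i)\bigr)$, which is $\ge 0$ for $x \in S_i$ straight from Definition~\ref{defn:power_cell}.

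To upgrade this to the strict inequality for $\mu$-a.e.\ $x \in S_i$ when $i \ne j$, I would argue that $\{x : \Delta_{ij}(x) = 0\} \subseteq \{x : \|x-y_i\|^2 - g^*(y_i) = \|x - y_j\|^2 - g^*(y_j)\}$, and since $y_i \ne y_j$ the two sides differ by a non-constant affine function of $x$ (with gradient $2(y_j - y_i) \ne 0$), so this set is an affine hyperplane and hence Lebesgue-null; as $\mu$ is absolutely continuous (Assumption~\ref{assume}), it is $\mu$-null as well. (Alternatively, this strict inequality is precisely what is established in the proof of Theorem~\ref{thm:dual-opt}.) The only step with any subtlety is the justification of the $c$-transform formula for $f^*$ and the choice of that pointwise representative; granted this, everything reduces to the definition of the power cells and the algebra of squared norms.
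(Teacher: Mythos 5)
Your proof is correct and follows essentially the same route as the paper's: both reduce all three claims to the identity $f^*(x) = \|x - y_i\|^2 - g^*(y_i)$ for $x \in S_i$ and then expand the square. The only difference is in how that identity is justified --- you fix $f^*$ to be the $c$-transform of $g^*$ (which makes it hold at every point of $S_i$), whereas the paper obtains it from complementary slackness, i.e.\ $\|x-y\|^2 - f^*(x) - g^*(y) = 0$ holding $\pi^*$-almost surely.
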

\begin{proof}
    Nonnegativity follows by feasibility of $(f^*,g^*)$ for the dual OT problem~\eqref{eqn:w2_duality}, with strict inequality following from the fact that $\|x - y_i\|^2 - g^*(y_i) < \|x - y_j\|^2 - g^*(y_j)$ in the interior of $S_i$.
     The vanishing $\Delta_{ii} \equiv 0$ follows from the fact that $\|x - y\|^2 - f^*(x) - g^*(y) = 0$ $\pi^*$-almost surely, by strong duality.
     For the final item, observe that
    \[
    	\Delta_{ij}(x) = \|x - y_j\|^2 - f^*(x) - g^*(y_j)
    	= \|x - y_j\|^2 - \|x - y_i\|^2 + g^*(y_i) - g^*(y_j)
    \]
	where the second step is because $\|x-y_i\|^2 = f^*(x) + g_i^*$ by 
	the previous item $\Delta_{ii}(x)$ = 0.
	Now expand the square.
\end{proof}

Our second main assumption on the measure $\mu$ relates to the regularity of the density along level sets defined by the slacks.
We require several definitions.
For $i \neq j$ and $a \geq 0$, set
\begin{align*}
S_{ij}(a) & := \{ x \in \R^d : \|x-y_i\|^2 - g_i^* \leq \|x-y_k\|^2 - g_k^* - a \mathbf{1}_{k \neq i,j}, \;\; \forall k \in [n]\} \\
& = \{x \in S_i: \Delta_{ik}(x) \geq a, \;\; \forall k \neq i, j\}\,.
\end{align*}
When $a = 0$, $S_{ij}(0) = S_i$.
For $a > 0$, $S_{ij}(a)$ is the subset obtained from $S_i$ by pushing in the hyperplanes separating $S_i$ from all neighboring cells other than $S_j$.
Likewise, for $t \geq 0$, we let $H_{ij}(t;a) = \{x \in S_{ij}(a): \Delta_{ij}(x) = t\}$ be the intersection of this set with a hyperplane parallel to the boundary between $S_i$ and $S_j$. See Figure~\ref{fig:hiij} for an illustration.

Since $\mathbbold{1}[x \in S_{ij}(a)] \mu(x)$ is in $L^1(\R^d)$, we can define
\begin{equation}\label{eqn:h_def}
h_{ij}(t; a) := \int_{H_{ij}(t;a)} \mu(x) d \mathcal H_{d-1}(x) \in L^1(\R)
\end{equation}
where $\mathcal H_{d-1}$ denotes the $(d - 1)$-dimensional Hausdorff measure on $H_{ij}(t;a)$.
When $a = 0$, we abbreviate $H_{ij}(t; a)$ and $h_{ij}(t; a)$ by $H_{ij}(t)$ and $h_{ij}(t)$, respectively.

The benefit of this definition is that it gives us a convenient way to integrate functions that depend only on the slacks; indeed, the coarea formula implies that for any nonnegative $\phi: \R \to \R$,
\begin{equation*}
\int_{S_{ij}(a)} \phi(\Delta_{ij}(x)) \mu(x) dx = \frac{1}{2 \|y_i - y_j\|} \int_0^\infty \phi(t) h_{ij}(t; a) dt\,.
\end{equation*}

\begin{center}
	\begin{figure}
		\centering
		\includegraphics[scale=0.9]{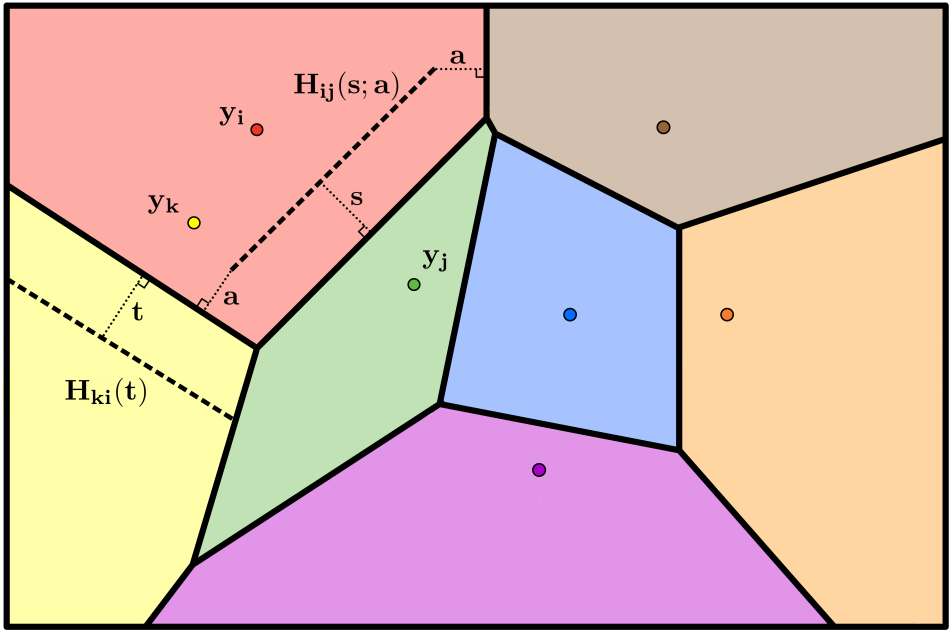}
		\caption{Power cell diagram with our $H_{ki}(t)$, $H_{ij}(s; a)$ notation depicted.
		}
		\label{fig:hiij}
	\end{figure}
\end{center}

We require the following crucial condition on the measure $\mu$.
\begin{assume}\label{continuous}
For all $i \neq j$ and $a \geq 0$ sufficiently small, the functions $t \mapsto h_{ij}(t; a)$ and $a \mapsto h_{ij}(0; a)$ are continuous at $0$.
\end{assume}

Assumption~\ref{continuous} is a strong requirement on the regularity of $\mu$ along hyperplanes, and it is essential for our results.
As alluded to in the statement of \cref{thm:subopt_limit}, it is possible to verify \cref{continuous} under easy conditions on $\mu$.
Say that $\mu$ is \emph{dominated along hyperplanes} if for any affine hyperplane $H$ orthogonal to a vector $v$ there exists a nonnegative $\psi: \R^{d-1} \to \R$, integrable with respect to the Lebesgue measure, and an affine isometry $P: H \to \R^{d-1}$ such that
\begin{equation*}
\mu(x + t v) \leq \psi(P x) \quad \forall t \in \R, x \in H\,.
\end{equation*}
If $\mu$ is pointwise bounded and compactly supported, then it is dominated along hyperplanes; however, some non-compactly supported measures, such as the standard Gaussian measure on $\R^d$ also enjoy this property.

\begin{prop}\label{compact}
If $\mu$ is continuous and dominated along hyperplanes, then \cref{continuous} holds.
\end{prop}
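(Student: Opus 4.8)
\emph{Proof sketch.}
The plan is to turn $h_{ij}(t;a)$ into a genuine Lebesgue integral over a fixed copy of $\R^{d-1}$, using coordinates adapted to the affine function $\Delta_{ij}$, and then to read off both continuity statements from the dominated convergence theorem; the role of the ``dominated along hyperplanes'' hypothesis is precisely to supply an integrable majorant that does not depend on $t$ or $a$.

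Fix $i\neq j$. By the last item of \cref{lem:slack_properties}, $\Delta_{ij}$ is affine with gradient $2(y_i-y_j)\neq 0$, so I would set $v:=(y_i-y_j)/\|y_i-y_j\|$, $r(t):=t/(2\|y_i-y_j\|)$, and $H:=\{\Delta_{ij}=0\}$, so that $\{\Delta_{ij}=t\}$ is the translate of the hyperplane $H$ by $r(t)v$ and $v$ is a unit normal to $H$. Applying the domination hypothesis to $H$ and $v$ gives a nonnegative $\psi\in L^1(\R^{d-1})$ and an affine isometry $P\colon H\to\R^{d-1}$ with $\mu(x+sv)\le\psi(Px)$ for all $s\in\R$ and $x\in H$. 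Since $\iota_t\colon z\mapsto P^{-1}(z)+r(t)v$ is an affine isometry of $\R^{d-1}$ onto $\{\Delta_{ij}=t\}$, it pushes Lebesgue measure forward to $\mathcal H_{d-1}$, and pulling the defining inequalities of $H_{ij}(t;a)$ back through $\iota_t$ yields, for $t\ge 0$,
\begin{equation*}
h_{ij}(t;a)=\int_{\R^{d-1}}\mathbf{1}_{E(t;a)}(z)\,\mu(\iota_t(z))\,dz,\qquad E(t;a):=\bigl\{z\in\R^{d-1}:\Delta_{ik}(\iota_t(z))\ge a\ \text{ for all }k\neq i,j\bigr\},
\end{equation*}
with the integrand bounded by $\psi(z)$ uniformly in $t$ and $a$ (because $\iota_t(z)=P^{-1}(z)+r(t)v$ with $P^{-1}(z)\in H$).

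Next I would take limits. As $t\to 0^+$ the density factor $\mu(\iota_t(z))\to\mu(\iota_0(z))$ since $\mu$ is continuous and $\iota_t(z)\to\iota_0(z)$; and for each $k$ the map $z\mapsto\Delta_{ik}(\iota_t(z))$ is affine in $z$ and differs from $z\mapsto\Delta_{ik}(\iota_0(z))$ by a $z$-independent constant (a multiple of $t$). Hence at every $z$ where no constraint function hits the value $a$ exactly, the sign of $\Delta_{ik}(\iota_t(z))-a$ is eventually constant in $t$ for each of the finitely many $k$, so $\mathbf{1}_{E(t;a)}(z)\to\mathbf{1}_{E(0;a)}(z)$; dominated convergence then gives $h_{ij}(t;a)\to h_{ij}(0;a)$, which is the continuity of $t\mapsto h_{ij}(t;a)$ at $0$. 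The continuity of $a\mapsto h_{ij}(0;a)$ at $0$ comes out of the identical argument with $a$ varying in place of $t$.

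The main obstacle is to check that the exceptional set of $z$ above is Lebesgue-null in $\R^{d-1}$; this is also the only point where I would use that every atom of $\nu$ carries positive mass. The constraint $z\mapsto\Delta_{ik}(\iota_0(z))$ is non-constant unless $\nabla\Delta_{ik}\parallel\nabla\Delta_{ij}$, i.e.\ $\Delta_{ik}=\lambda\Delta_{ij}+\beta$ for scalars $\lambda\neq0$, $\beta$. In that case, using $\Delta_{ji}=-\Delta_{ij}$ and $\Delta_{jk}=\Delta_{ik}-\Delta_{ij}=(\lambda-1)\Delta_{ij}+\beta$ (both immediate from the explicit formula in \cref{lem:slack_properties}), one finds that if $\beta=0$ then, according to the sign of $\lambda$, one of $S_i$, $S_j$, $S_k$ is forced inside the hyperplane $\{\Delta_{ij}=0\}$ and hence is $\mu$-null---impossible, since it equals $T_{\mu\to\nu}^{-1}(y_m)$ for the corresponding atom. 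So $\beta\neq0$: the constraint is then a nonzero constant on $H$, which is positive whenever $H$ meets $S_i$ and so is inactive near $t=0$ once $a$ is smaller than all such constants, while if it is negative then $h_{ij}$ vanishes identically near $t=0$ and there is nothing to prove. The remaining constraint functions are genuinely non-constant affine functions on $\R^{d-1}$, so each level set $\{\Delta_{ik}(\iota_0(\cdot))=a\}$ is an affine hyperplane in $\R^{d-1}$ and the exceptional set lies in their finite union, which is null. Modulo this non-degeneracy check and the change-of-variables bookkeeping, the argument is a routine application of dominated convergence.
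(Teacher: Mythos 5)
Your argument is correct and is essentially the paper's proof: both reduce $h_{ij}(t;a)$ to an integral over the fixed hyperplane $\{\Delta_{ij}=0\}$, use continuity of $\mu$ for the density factor and the ``dominated along hyperplanes'' hypothesis as the majorant for dominated convergence, and show the indicators converge off a Lebesgue-null set of the hyperplane. Your case analysis ruling out $\Delta_{ik}=\lambda\Delta_{ij}$ (the $\beta=0$ case) via the positive mass of the cells is exactly the content of the paper's auxiliary lemma that $H_{ij}\neq H_{ik}$, just phrased computationally rather than via the codimension-two intersection argument.
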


Finally, we record a simple consequence of the connectedness of the support of $\mu$, which we will rely on extensively in \cref{sec:dual}.
\begin{lemma}\label{lem:connected}
Under Assumption~\ref{assume}, we have $h_{ij}(0) = h_{ji}(0)$ for all $i \neq j$, and the graph on $[n]$ with edge set $\{(i, j): h_{ij}(0) > 0\}$ is connected.
\end{lemma}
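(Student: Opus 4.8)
The plan is to extract both assertions from the explicit affine formula for the slacks furnished by Lemma~\ref{lem:slack_properties}, together with the convexity and disjoint-interiors structure of the power cells (Definition~\ref{defn:power_cell}).

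\emph{Symmetry.} By the last item of Lemma~\ref{lem:slack_properties}, on $S_i$ the slack $\Delta_{ij}$ coincides with the globally defined affine function $\delta_{ij}(x) := \|x - y_j\|^2 - \|x - y_i\|^2 + g^*(y_i) - g^*(y_j)$, and one has $\delta_{ji} = -\delta_{ij}$. Writing $W_{ij} := \{x \in \R^d : \delta_{ij}(x) = 0\}$ for the (hyper)plane on which the power-cell constraints associated to $y_i$ and $y_j$ agree, it follows that $H_{ij}(0) = S_i \cap W_{ij}$ and $H_{ji}(0) = S_j \cap W_{ij}$. On $W_{ij}$ the defining inequality systems of $S_i$ and of $S_j$ are literally identical, so $S_i \cap W_{ij} = S_j \cap W_{ij}$; hence $H_{ij}(0) = H_{ji}(0)$ as subsets of $\R^d$, and integrating $\mu$ against $\mathcal H_{d-1}$ over this common set gives $h_{ij}(0) = h_{ji}(0)$.

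\emph{Connectedness.} Suppose toward a contradiction that $[n]$ splits as $A \sqcup B$ with $A, B \neq \emptyset$ and $h_{ij}(0) = 0$ for all $i \in A$, $j \in B$. Put $P_A := \bigcup_{i \in A} S_i$ and $P_B := \bigcup_{j \in B} S_j$, which are closed with $P_A \cup P_B = \R^d$. The computation above also shows $S_i \cap S_j = S_i \cap W_{ij} = H_{ij}(0)$ whenever $i \neq j$, so $P_A \cap P_B = \bigcup_{i \in A,\, j \in B} H_{ij}(0)$ is a finite union of convex polytopes, each lying in a hyperplane, on which $\int \mu \, d\mathcal H_{d-1} \leq \sum_{i \in A,\, j \in B} h_{ij}(0) = 0$; thus $\mu = 0$ holds $\mathcal H_{d-1}$-almost everywhere on $P_A \cap P_B$. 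Let $\Omega := \mathrm{int}(\supp \mu)$, which is connected by Assumption~\ref{assume} and on which $\mu > 0$. Since $\mu > 0$ everywhere on $\Omega$, the relatively closed set $K := \Omega \cap P_A \cap P_B$ must have $\mathcal H_{d-1}(K) = 0$ (otherwise some superlevel set $\{x \in K : \mu(x) > 1/m\}$ would have positive $\mathcal H_{d-1}$-measure, forcing $\int_K \mu \, d\mathcal H_{d-1} > 0$). On the other hand, using $P_A \cup P_B = \R^d$ one checks $\Omega \setminus K = \bigl(\Omega \cap (\R^d \setminus P_B)\bigr) \sqcup \bigl(\Omega \cap (\R^d \setminus P_A)\bigr)$, a partition into two open sets; each is nonempty, since for $i \in A$ the cell $S_i$ carries positive $\mu$-mass $\nu_i$, hence has nonempty interior, which is disjoint from every other cell and meets $\Omega$ (here we use $\mu(\partial S_i) = 0$ and that $\mu$ gives full mass to $\Omega$). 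Thus $\Omega \setminus K$ is disconnected, while $K$ is a relatively closed subset of the connected open set $\Omega$ with $\mathcal H_{d-1}(K) = 0$; invoking the classical fact that a closed set of vanishing $(d-1)$-dimensional Hausdorff measure cannot disconnect a domain (trivial for $d = 1$, and for $d \geq 2$ because $\mathcal H_{d-1}$-almost every line in a fixed direction avoids $K$), we conclude $\Omega \setminus K$ is connected, a contradiction. Hence the graph is connected.

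The symmetry statement and the identity $S_i \cap S_j = H_{ij}(0)$ are immediate once one has the affine formula for the slacks, so the only genuine work lies in the connectedness half: converting the hypothesis ``$h_{ij}(0) = 0$ across the cut'' into an honest topological separation of $\Omega$, and then appealing to removability of $\mathcal H_{d-1}$-null closed sets. The main obstacle to watch is the bookkeeping that makes the two pieces of $\Omega \setminus K$ provably nonempty and open, which is exactly where positivity of the weights $\nu_i$ and of $\mu$ on $\Omega$, together with $\mu(\partial S_i) = 0$, enter.
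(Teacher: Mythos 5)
Your proof is correct and follows essentially the same route as the paper's: symmetry via $H_{ij}(0)=H_{ji}(0)=S_i\cap S_j$, and connectedness by using positivity of $\mu$ on $\mathrm{int}(\supp(\mu))$ to convert $h_{ij}(0)=0$ into $\mathcal H_{d-1}$-negligibility of the interfaces and then invoking the fact that a relatively closed $\mathcal H_{d-1}$-null set cannot disconnect a connected open set (the paper outsources this last step to \cite[Lemma 49]{merigot2020optimal}, where you sketch it directly). The only difference is organizational: the paper deletes all null interfaces at once and partitions the resulting connected set by graph components, whereas you argue by contradiction on a single cut $A\sqcup B$.
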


The proofs of \cref{compact,lem:connected} appear in \cref{sec:appendix}.

\section{Case study: symmetric one-dimensional measures}\label{sec:sttb}
In order to provide intuition for our main result, we consider here a toy example which, despite its simplicity, illustrates many of the key underlying phenomena. Specifically, in this section we explicitly compute the suboptimality in the case where $\mu$ has a symmetric density on $\R$ and $\nu$ is the discrete distribution $\nu = (1/2) \delta_{-1} + (1/2)\delta_1$. The symmetry of both distributions around $0$ allows us to compute closed-form expressions for $\pi^*$ and $\pi_{\eta}$, and hence also for the suboptimality. These closed-form expressions hold for any $\eta > 0$ and facilitate understanding our assumptions and main techniques.

\paragraph*{Unregularized optimal transport plan $\pi^*$.} By symmetry of $\mu$, the optimal coupling $\pi^*$ is supported on the graph a function that sends $x \in \supp(\mu)$ to $\sgn(x)$. That is, 
\begin{align*}
	\pi^*(x, y) = \mathbbold{1}[y = \sgn(x)]\cdot \mu(x).
\end{align*}

\paragraph*{Regularized optimal transport plan $\pi_{\eta}$.} Let us compute the dual
potentials $f_{\eta}, g_{\eta}$ from Definition~\ref{defn:reg_potential_convention}.
Symmetry of the distributions around $0$ implies
$$
\pi_{\eta}(x, y) = \pi_{\eta}(-x, -y).
$$ 
Using~\eqref{eq:dual_opt} and solving, this means $f_{\eta}(x) - f_{\eta}(-x) = g_{\eta}(-y) - g_{\eta}(y)$
for all $x \in \supp(\mu)$ and $y \in \supp(\nu)$. Replacing $x$ with $-x$, we see that
both $f_{\eta}$ and $g_{\eta}$ must be even functions. By our 
convention in Definition~\ref{defn:reg_potential_convention},
it follows that $g_{\eta}(1) = g_{\eta}(-1) = 0$.

\par We can now solve for $f_{\eta}$ using the marginal constraint $\mu(x) = \pi_{\eta}(x, 1) + \pi_{\eta}(x, -1)$. Plugging in the optimality conditions~\eqref{eq:dual_opt} for $\pi_{\eta}$ and simplifying implies
$$
e^{\eta f_{\eta}(x)}
= \frac{1}{e^{-\eta(x - 1)^2} + e^{-\eta(x + 1)^2}}.
$$
Rearranging, we conclude that
\begin{equation}\label{eqn:sttb_pi_eta}
	\pi_{\eta}(x, y) = \frac{e^{-\eta(x - y)^2 }}{e^{-\eta(x - 1)^2 }
		+ e^{-\eta(x + 1)^2 }}\mu(x) = \frac{\mu(x)}{e^{2\eta x (1 - y)} + e^{-2\eta x(1 + y)}}.
\end{equation}
See Figure~\ref{sttb:cond} for an intuitive interpretation of $\pi_{\eta}$ as a smoothed version of $\pi^*$.

\begin{figure}
	\centering
	\includegraphics[width=0.4\textwidth]{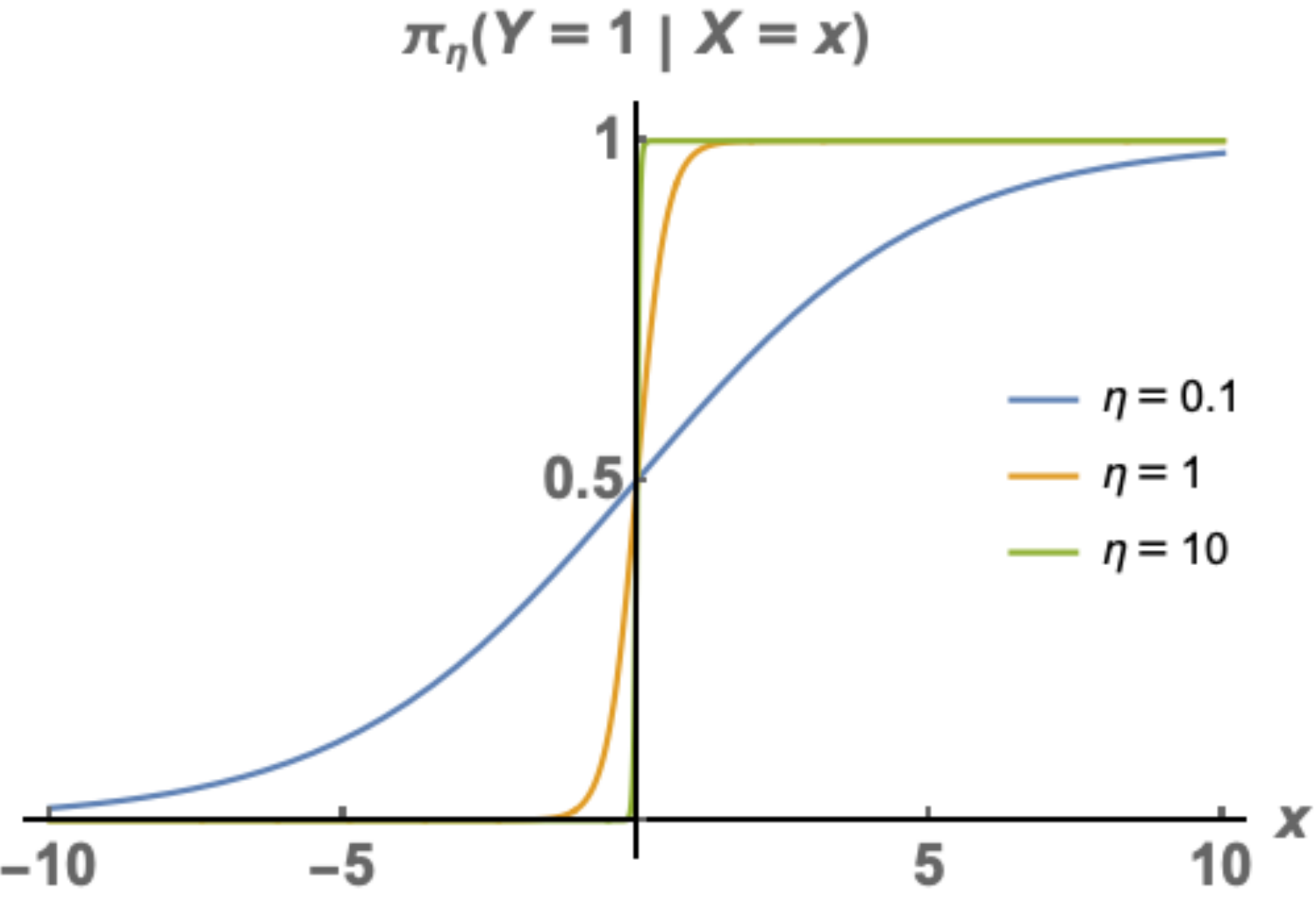}	\caption{For the toy example in Section \ref{sec:sttb}, the conditional distribution 
	$\pi_{\eta}(Y=1|X=x)$ of the regularized plan $\pi_{\eta}$ is the sigmoid function $1/(1 + e^{-4\eta x})$ by~\eqref{eqn:sttb_pi_eta}. As $\eta \to \infty$, this converges to the conditional distribution $\pi^*(Y=1|X=x) = \mathbbold{1}[\sign(x) = 1]$ of the unregularized plan $\pi^*$. The convergence is exponential in $\eta$ at any $x \neq 0$. There is a symmetric region around the origin of width $\Theta(1/\eta)$ on which $\pi_{\eta}(Y=1|X=x)$ is bounded away from $0$ and $1$.
	}
	\label{sttb:cond}
\end{figure}

\paragraph*{Explicit evaluation of suboptimality.} By symmetry, marginal constraints, and the
formula~\eqref{eqn:sttb_pi_eta}, we find
\begin{align}
	\E_{\pi_{\eta}}[(x - y)^2] - \E_ { \pi^*}[(x - y)^2]
	&= 2\int_{0}^{\infty} \left((x - 1)^2 (\pi_{\eta}(x, 1) - 1)+ (x + 1)^2\pi_{\eta}(x, -1)
	\right) dx \nonumber\\
	&= 2\int_0^{\infty} ((x + 1)^2  - (x - 1)^2) \pi_{\eta}(x, -1)dx
	\nonumber  \\
	&= 8 \int_0^{\infty} \frac{x}{ 1 + e^{4\eta x}} \mu(x) dx. \label{eqn:sttb_sub_opt}
\end{align}

\noindent 
The dominant part of~\eqref{eqn:sttb_sub_opt} as $\eta \to \infty$ is at $x = 0$, and if $\mu$ is continuous it can be shown that it is valid to replace $\mu(x)$ by $\mu(0)$ to obtain
\begin{equation*}
\E_{\pi_{\eta}}[(x - y)^2] - \E_ { \pi^*}[(x - y)^2] \approx 8 \int_0^{\infty} \frac{x}{ 1 + e^{4\eta x}} \mu(0) dx = - \frac{\Li_2(-1) \mu(0)}{2 \eta^2} = \frac{\pi^2 \mu(0)}{24 \eta^2}\,.
\end{equation*}
Here, $\Li_2$ is the \emph{dilogarithm} function, which will play a central role in our argument.
More details about this function can be found in \cref{sec:appendix}. In particular, the above integral identity is by~\cref{log_ints}.

\paragraph*{Necessity of assumptions.} 
If $\mu$ fails to be continuous at zero, convergence to $0$ may be slower than quadratic.
Consider $\mu(x) = c_p |x|^{-p}$ on $[-1, 1]$ for $p < 1$ and normalizing constant $c_p = (1 - p)/2$. The analysis above holds unchanged up to Equation~\ref{eqn:sttb_sub_opt}. However, the following step, in which we approximated the integral by replacing $\mu(x)$ with $\mu(0)$, does not hold here since $\mu$ is not continuous at $0$. Specifically,
$$
\E_{\pi_{\eta}}[(x - y)^2] - \E_ { \pi^*}[(x - y)^2] = 8c_p \int_0^{1} \frac{x^{ 1- p}}{1 + e^{4\eta x}}dx 
= \frac{2c_p}{\eta^{2 - p}} \int_0^{4\eta} \frac{u^{1-p}}{1 + e^{u}}du 
= \Theta\left(\frac{1}{\eta^{2 - p}}\right).
$$ This shows that in fact any polynomial rate faster than $1/\eta$ 
is achievable when our assumptions are violated. Morever, taking $\mu$ supported away from $0$ shows that an exponential rate can be obtained when $\mu$ is not supported at the decision boundary.

\section{Convergence of dual potentials}\label{sec:dual}
In this section, we develop an asymptotic expansion for the solution $g_\eta$ of~\eqref{eqn:sd_sink_dual} around the optimal solution $g^*$ to the unregularized problem~\eqref{eqn:w2_duality}.
Recall that Assumption~\ref{assume} implies that $g^*$ is unique, and it is easy to see~\cite{Nutz2021} that under this assumption $g_\eta$ converges to $g^*$.
The main result of this section is a more precise result, showing that this convergence happens at the rate $o(\eta^{-1})$.

We prove the following.
\begin{theorem}\label{dual_convergence}
Under Assumptions~\ref{assume} and~\ref{continuous},
\begin{equation*}
\lim_{\eta \to \infty} \|\eta(g_\eta - g^*)\|_{\infty} = 0\,.
\end{equation*}
\end{theorem}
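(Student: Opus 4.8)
The plan is to control $\eta(g_\eta - g^*)$ through the fixed-point equations that both pairs of potentials satisfy, exploiting the finite-dimensionality of $g$ (a vector in $\R^n$) together with the geometric structure of the power cells. First I would write down the optimality conditions. For the regularized problem, the marginal constraint $\nu_j = \int e^{-\eta(\|x-y_j\|^2 - f_\eta(x) - g_\eta(y_j))}\mu(x)\,dx$ together with the $\mu$-marginal constraint $\sum_j e^{-\eta(\|x-y_j\|^2 - f_\eta(x) - g_\eta(y_j))} = 1$ lets me eliminate $f_\eta$, yielding $e^{\eta f_\eta(x)} = \big(\sum_j e^{-\eta(\|x-y_j\|^2 - g_\eta(y_j))}\big)^{-1}$ and hence a closed system purely in $g_\eta$:
\begin{equation*}
\nu_j = \int_{\R^d} \frac{e^{-\eta(\|x-y_j\|^2 - g_\eta(y_j))}}{\sum_k e^{-\eta(\|x-y_k\|^2 - g_\eta(y_k))}}\,\mu(x)\,dx\,, \qquad j = 1,\dots,n\,.
\end{equation*}
The unregularized potentials satisfy the analogous ``hard-max'' identity $\nu_j = \mu(S_j) = \int \mathbf 1[x \in S_j]\mu(x)\,dx$, where $S_j$ is the power cell of Definition~\ref{defn:power_cell}. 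Subtracting these two and writing $g_\eta = g^* + \varepsilon_\eta$ with $\|\varepsilon_\eta\|_\infty \to 0$ (known a priori), the goal becomes showing $\eta\varepsilon_\eta \to 0$.

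The key step is a careful asymptotic analysis of the difference between the soft-max and hard-max integrals. On a cell $S_j$, the integrand $e^{-\eta \Delta_{jk}}/(\cdots)$ differs from $\mathbf 1[x\in S_j]$ only in a boundary layer of width $O(\eta^{-1})$ around each facet $\overline{S_j}\cap\overline{S_k}$; there the slack $\Delta_{jk}(x)$ is $O(\eta^{-1})$, and $\varepsilon_\eta$ perturbs it additively. Using the coarea formula as in \cref{ssec:prelim:geo} to slice along level sets of $\Delta_{jk}$, the leading contribution of the boundary layer between $S_j$ and $S_k$ to the $j$-th equation is, up to $o(\eta^{-1})$,
\begin{equation*}
\frac{1}{2\eta\|y_j-y_k\|}\,h_{jk}(0)\int_0^\infty \left(\frac{1}{1+e^{t}} \text{-type kernel shifted by } \eta(\varepsilon_\eta(y_j)-\varepsilon_\eta(y_k))\right)dt\,,
\end{equation*}
where Assumption~\ref{continuous} is exactly what licenses replacing $h_{jk}(t;a)$ by $h_{jk}(0)$ in the limit. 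Expanding the sigmoid kernel to first order in the shift $\eta(\varepsilon_\eta(y_j)-\varepsilon_\eta(y_k))$, the system for $\eta\varepsilon_\eta$ becomes, to leading order, a homogeneous linear system $L\,(\eta\varepsilon_\eta) = o(1)$, where $L$ is a weighted graph Laplacian on $[n]$ with edge weights proportional to $h_{jk}(0)/\|y_j-y_k\|$. By Lemma~\ref{lem:connected} this graph is connected, so $L$ has a one-dimensional kernel spanned by the all-ones vector; the normalization $\E_\nu[g_\eta] = \E_\nu[g^*] = 0$ kills that direction, so $L$ is invertible on the relevant subspace and $\eta\varepsilon_\eta \to 0$ follows.

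I expect the main obstacle to be making the boundary-layer expansion rigorous and uniform: one must show that (i) the contribution of points at slack $\Omega(\eta^{-1+\delta})$ from \emph{every} facet is negligible at scale $\eta^{-1}$ (needs the domination/integrability from Assumption~\ref{continuous} to handle the tails, cf.\ \cref{compact}), (ii) corners where three or more cells meet contribute at lower order (a codimension-$\ge 2$ estimate), and (iii) the error terms are controlled uniformly as $\varepsilon_\eta \to 0$, so that the limiting linear system is genuinely the leading-order equation rather than being swamped by the perturbation. A subtlety is that a priori we only know $\|\varepsilon_\eta\|_\infty \to 0$, not any rate, so the linearization of the sigmoid must be justified with the shift $\eta(\varepsilon_\eta(y_j)-\varepsilon_\eta(y_k))$ treated as an unknown that is \emph{bounded} — which requires first extracting an a priori bound $\|\eta\varepsilon_\eta\|_\infty = O(1)$ (e.g.\ by a compactness/contradiction argument: if $\|\eta\varepsilon_\eta\|_\infty\to\infty$ along a subsequence, rescale and derive a contradiction with the Laplacian structure) before upgrading to convergence to $0$.
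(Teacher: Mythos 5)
Your strategy is sound and genuinely different from the paper's. The paper never touches the marginal (fixed-point) equations: instead it characterizes $d_\eta=\eta(g_\eta-g^*)$ as the unique minimizer of an auxiliary convex program (\cref{prop:dual}) whose value $\Phi(\eta)$ is the first-order defect of the entropic cost, bounds $\limsup \eta\Phi(\eta)$ from above simply by plugging in $d=0$ (\cref{lem:comparison_limit}), proves boundedness of $d_\eta$ via the inequality $\log(1+ab)\ge\log(1+a)\log(1+b)$ plus connectivity (\cref{dual_bounded}), and then extracts a subsequential limit $d_\infty$ and matches the upper bound against a lower bound computed with the dilogarithm inversion formula, which produces the quadratic form $\sum_{i\ne j}(d_\infty(y_j)-d_\infty(y_i))^2 h_{ij}(0)/\|y_i-y_j\|$ --- a limiting strong convexity that forces $d_\infty=0$. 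Your route through the stationarity conditions and a flux-balance argument leads to essentially the same weighted graph Laplacian, and \cref{lem:connected} plus the normalization $\E_\nu[g_\eta]=0$ plays the identical role of killing the constant direction. Two remarks in your favor: first, no Taylor expansion of the sigmoid is actually needed, since the net flux across the facet $S_j\cap S_k$ is governed by the exact identity $\log(1+e^{-\delta})-\log(1+e^{\delta})=-\delta$ with $\delta=d_\eta(y_k)-d_\eta(y_j)$, so the Laplacian system is exact at leading order for \emph{bounded} $\delta$, not merely a linearization; second, your approach is arguably more transparent about why the answer is a Laplacian. What the paper's variational route buys is that the limiting value of $\Phi(\eta)$ falls out of the same computation, which is precisely \cref{cost_cor} and hence \cref{thm:cost_limit}; your argument would establish \cref{dual_convergence} but not that expansion.

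The one place where your proposal is thinnest is exactly the one you flag: the a priori bound $\|\eta(g_\eta-g^*)\|_\infty=O(1)$. This is not a formality --- the paper spends \cref{dual_bounded} and \cref{lem:log_subadditive} on it --- and your boundary-layer expansion must remain valid in the regime where the shifts $\delta_{jk}$ are unbounded (though $o(\eta)$, using the known qualitative convergence $g_\eta\to g^*$), since there the layer has width $\delta/\eta$ rather than $1/\eta$ and the error from replacing $h_{jk}(t)$ by $h_{jk}(0)$ and from the codimension-two corner regions must be shown to be $o(\delta/\eta)$ uniformly. A max-principle version of your contradiction argument (look at the index maximizing $d_\eta$; it receives strictly more than $\nu_j$ mass if its value exceeds that of a graph neighbor by a diverging amount) closes this, but it needs to be written out; as it stands this step is a sketch of a sketch. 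With that step supplied, and with the $S_{ij}(a)$ machinery and \cref{continuous} invoked as in \cref{log_ints_si} to control corners and tails, your proof goes through.
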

A consequence of \cref{dual_convergence} is that $\eta(f_\eta - f^*) \to 0$ pointwise, though we stress that this convergence is not uniform.

From the general theory of entropic optimal transport, this result is unexpected, and it reflects particular features of the semi-discrete setting.
For instance, when $\mu$ and $\nu$ are both discrete the quantities $\eta(g_\eta - g^*)$ and $\eta(f_\eta -f^*)$ both converge to positive limits in general.
Moreover, Assumption~\ref{assume} is essential: if $\mu$ is not positive on the interior of its support, it is possible for $\eta(g_\eta - g^*)$ to diverge.\footnote{This occurs, for instance, when $\mu$ decays to zero at different rates on opposite sides of one of the hyperplane boundaries $H_{ij}$.}

The proof of \cref{dual_convergence} also yields the following corollary on the difference between the Wasserstein distance and the entropic cost, which gives~\cref{thm:cost_limit}.
\begin{corollary}\label{cost_cor}
Under Assumptions~\ref{assume} and~\ref{continuous},
\begin{equation*}
\lim_{\eta \to \infty} \eta^2 \left(\E_{\pi^*}[\|x - y\|^2] -  (\E_{\pi_\eta}[\|x - y\|^2] + \frac 1 \eta \KL{\pi_\eta}{\mu \otimes \rho})\right) = \frac{\zeta(2)}{2} \sum_{i < j} \frac{h_{ij}(0)}{\|y_i - y_j\|}\,.
\end{equation*}
Equivalently,
\begin{equation}
\E_{\pi_\eta}[\|x - y\|^2] + \frac 1 \eta \KL{\pi_\eta}{\mu \otimes \nu}  = W_2^2(\mu, \nu) + \frac{1}{\eta} H(\nu) - \frac{\zeta(2)}{2\eta^2} \sum_{i < j} \frac{h_{ij}(0)}{\|y_i - y_j\|} + o(\eta^{-2})\,.
\end{equation}
\end{corollary}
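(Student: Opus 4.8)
The plan is to reduce the corollary to a second‑order expansion of $\E_\mu[f^*-f_\eta]$ and then extract the constant via the coarea formula of \cref{ssec:prelim:geo}. First I would use strong duality to rewrite both sides. Since $(f^*,g^*)$ solves \eqref{eqn:w2_duality} with $\E_\nu[g^*]=0$, strong duality gives $\E_{\pi^*}[\|x-y\|^2]=W_2^2(\mu,\nu)=\E_\mu[f^*]$. Since $(f_\eta,g_\eta)$ attains the value of \eqref{eqn:sd_sink_dual} with $\E_\nu[g_\eta]=0$, and since the $\mu$‑marginal constraint for $\pi_\eta$ forces $\sum_j e^{-\eta(\|x-y_j\|^2-f_\eta(x)-g_\eta(y_j))}=1$ for $\mu$‑a.e.\ $x$ (integrate \eqref{eq:dual_opt} over $y$), the term $\frac1\eta\sum_j\int e^{-\eta(\cdots)}\mu\,dx$ equals $\frac1\eta$ and cancels the $+\frac1\eta$ in \eqref{eqn:sd_sink_dual}, collapsing the dual value to $\E_\mu[f_\eta]$; hence $\E_{\pi_\eta}[\|x-y\|^2]+\frac1\eta\KL{\pi_\eta}{\mu\otimes\rho}=\E_\mu[f_\eta]$. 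So the limit in question is $\lim_\eta\eta^2\E_\mu[f^*-f_\eta]$, and the second (equivalent) form of the statement then follows from \eqref{ent_shift} together with $\E_{\pi^*}[\|x-y\|^2]=W_2^2(\mu,\nu)$.

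Next I would derive a pointwise formula for $f^*-f_\eta$. For $\mu$‑a.e.\ $x$, write $i=i(x)$ for the cell index with $x\in S_i$. The vanishing of the $i$‑th slack (\cref{lem:slack_properties}) gives $f^*(x)=\|x-y_i\|^2-g^*(y_i)$, while the $\mu$‑marginal constraint gives $f_\eta(x)=-\frac1\eta\log\sum_j e^{-\eta(\|x-y_j\|^2-g_\eta(y_j))}$. Setting $\epsilon_j:=g^*(y_j)-g_\eta(y_j)$, so $\|\eta\epsilon\|_\infty\to0$ by \cref{dual_convergence}, and factoring $e^{-\eta f^*(x)}$ out of the sum, one finds $f^*(x)-f_\eta(x)=-\epsilon_{i(x)}+\frac1\eta\log\bigl(1+\sum_{j\neq i(x)}e^{-\eta(\Delta_{i(x)j}(x)+\epsilon_j-\epsilon_{i(x)})}\bigr)$ in terms of the slacks $\Delta_{ij}$ of \cref{defn:slack}. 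Integrated against $\mu$, the linear term drops out, because $\sum_i\mu(S_i)\epsilon_i=\sum_i\nu_i\epsilon_i=\E_\nu[g^*-g_\eta]=0$, using $S_i=T^{-1}_{\mu\to\nu}(y_i)$ and hence $\mu(S_i)=\nu_i$. Thus it remains to show $\eta\,\E_\mu\bigl[\log(1+\sum_{j\neq i(x)}e^{-\eta(\Delta_{i(x)j}(x)+\epsilon_j-\epsilon_{i(x)})})\bigr]\to\frac{\zeta(2)}{2}\sum_{i<j}\frac{h_{ij}(0)}{\|y_i-y_j\|}$.

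For this I would localize near the faces $H_{ij}$. Fix a small $a>0$; partition each $S_i$ (up to a $\mu$‑null set) into the sets $E_{ij}$ on which $\Delta_{ij}$ is the smallest of the slacks $\{\Delta_{ik}\}_{k\neq i}$, and split each $E_{ij}$ into $E_{ij}\cap S_{ij}(a)$ and its complement. On $E_{ij}\cap S_{ij}(a)$ every slack but $\Delta_{ij}$ exceeds $a$, so the sum equals its $j$‑th term up to an $O(e^{-\eta a/2})$ additive error inside the log; applying the coarea identity of \cref{ssec:prelim:geo}, substituting $u=\eta t$, and discarding a piece near $\Delta_{ij}=a$ that is exponentially small after multiplication by $\eta$, the quantity $\eta\int_{E_{ij}\cap S_{ij}(a)}\log(1+e^{-\eta(\Delta_{ij}(x)+\epsilon_j-\epsilon_i)})\,\mu(x)\,dx$ becomes $\frac{1}{2\|y_i-y_j\|}\int_0^\infty\log(1+e^{-u-\eta\epsilon_j+\eta\epsilon_i})\,h_{ij}(u/\eta;a)\,du$. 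Since $\eta\epsilon_i,\eta\epsilon_j\to0$ and $t\mapsto h_{ij}(t;a)$ is continuous at $0$ (\cref{continuous}), dominated convergence — with dominating function built from local boundedness of $h_{ij}(\cdot;a)$ near $0$, its $L^1$ bound, and the exponential decay of $\log(1+e^{-u})$ — gives the limit $\frac{h_{ij}(0;a)}{2\|y_i-y_j\|}\int_0^\infty\log(1+e^{-u})\,du=\frac{h_{ij}(0;a)}{2\|y_i-y_j\|}\cdot\frac{\zeta(2)}{2}$, using $\int_0^\infty\log(1+e^{-u})\,du=-\Li_2(-1)=\pi^2/12$ (\cref{log_ints}). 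Letting $a\to0$ (continuity of $a\mapsto h_{ij}(0;a)$), summing over ordered pairs, and using $h_{ij}(0)=h_{ji}(0)$ from \cref{lem:connected} to merge the two sides of each face, yields exactly $\frac{\zeta(2)}{2}\sum_{i<j}\frac{h_{ij}(0)}{\|y_i-y_j\|}$.

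The hard part will be the leftover ``corner'' region $\bigcup_{i,j}(E_{ij}\setminus S_{ij}(a))$ — points lying within $a$ of at least two faces of their own cell. There the integrand $\log(1+\sum_j e^{-\eta(\cdots)})$ is bounded (the slacks are nonnegative and the $\epsilon_j$ are small, so each summand is $O(1)$) but not exponentially small, so after multiplying by $\eta$ I must show its $\mu$‑integral vanishes in the limit. This is a codimension‑counting estimate: imposing a second, transverse slack constraint $\le a$ cuts each level slice of $\Delta_{ij}$ down to a lower‑dimensional slab, so the relevant $\mu$‑mass is controlled by iterating the coarea/slicing identities of \cref{ssec:prelim:geo} together with the integrability of the $h_{ij}$; one then needs this bound to be uniform in $a$ in order to legitimately exchange the $\eta\to\infty$ and $a\to0$ limits. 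These are exactly the kind of estimates already required to prove \cref{dual_convergence}, which is why the corollary falls out of that proof.
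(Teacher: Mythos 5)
Your proposal is correct in substance and lands on the same central object as the paper: the quantity you reduce to in your second and third paragraphs, $\eta\sum_i\int_{S_i}\log\bigl(1+\sum_{j\neq i}e^{d_\eta(y_j)-d_\eta(y_i)-\eta\Delta_{ij}(x)}\bigr)\mu(x)\,dx$, is exactly $\eta\Phi(\eta)$ from \cref{prop:dual}, and its limit is \eqref{phi_limit}. The differences are organizational. You obtain the identity \eqref{eq:phi_value} by collapsing the dual functional to $\E_\mu[f_\eta]$ via the marginal constraint and expanding $f^*-f_\eta$ pointwise; the paper gets the same identity by reparametrizing the dual problem in $(\delta_f,\delta_g)$ and eliminating $\delta_f$. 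You then compute $\lim_\eta \eta\Phi(\eta)$ by invoking \cref{dual_convergence} as a black box, so that every prefactor $e^{d_\eta(y_j)-d_\eta(y_i)}$ tends to $1$ and only $-\Li_2(-1)=\zeta(2)/2$ is needed; the paper instead reads \eqref{phi_limit} off from inside the proof of \cref{dual_convergence}, where the limit emerges from the sandwich argument that proves $d_\eta\to 0$ (and where the dilogarithm inversion formula is needed precisely because $d_\infty$ is not yet known to vanish). Your route is a legitimate and arguably cleaner way to present the corollary as a consequence of the already-proved theorem.

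The one place your plan is harder than necessary---and where the write-up is slightly off---is the ``corner'' region. For fixed $a>0$ the contribution of $\bigcup_j(E_{ij}\setminus S_{ij}(a))$ does \emph{not} vanish as $\eta\to\infty$: the integrand still deposits mass of order $1/\eta$ along the portions of the faces lying inside the corner, so after multiplying by $\eta$ the limit is of order $\sum_j\bigl(h_{ij}(0)-h_{ij}(0;a)\bigr)$, which is small only after $a\to 0$ (by \cref{continuous}). What you need is smallness of $\limsup_{\eta\to\infty}$ of the corner term as $a\to 0$, not a vanishing limit for fixed $a$; your remark about interchanging limits suggests you sense this, but the codimension-counting estimate you propose is not the right mechanism. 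The paper's \cref{lem:comparison_limit} sidesteps the issue entirely: subadditivity of $\alpha\mapsto\log(1+\alpha)$ bounds the whole integral over $S_i$ by $\sum_{j\neq i}\int_{S_i}\log(1+M_\eta e^{-\eta\Delta_{ij}})\mu\,dx$, and each summand is evaluated by \cref{log_ints_si} with $a=0$, giving the matching upper bound $\frac{\zeta(2)}{4}\sum_{i\neq j}h_{ij}(0)/\|y_i-y_j\|$ directly. Substituting that one-line upper bound for your corner analysis (and keeping your lower bound via $S_{ij}(a)\cap\{\Delta_{ij}<a\}$) makes the argument complete.
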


To prove~\cref{dual_convergence}, we define the function
\begin{equation*}
d_\eta := \eta(g_\eta - g^*)\,.
\end{equation*}
We will show that $d_\eta$ is the unique solution to an auxiliary convex optimization problem whose solution gives the first-order difference between the Wasserstein distance $W_2^2(\mu, \nu)$ and the entropic cost $\E_{\pi_\eta}[\|x - y\|^2] + \frac 1 \eta \KL{\pi_\eta}{\mu \otimes \rho}$.
By showing that the zero function is an approximate optimizer of this auxiliary problem and establishing a form of strong convexity around $0$ in the limit, we obtain that $d_\eta \to 0$, proving the claim.

We begin by defining these auxiliary optimization problems.
\begin{prop}\label{prop:dual}
The function $d_\eta$ is the unique solution of
\begin{equation}\label{d_eq}
\min_{d \in L_1(\nu)\,:\, \E_\nu d = 0} \sum_{i=1}^n \int_{S_i} \log(1 + \sum_{j \neq i} e^{d(y_j) - d(y_i) - \eta \Delta_{ij}(x)}) \mu(x) dx\,.
\end{equation}
Moreover, if we denote by $\Phi(\eta)$ the value of~\eqref{d_eq}, then
\begin{equation}\label{eq:phi_value}
\Phi(\eta) = \eta  \left(\E_{\pi^*}[\|x - y\|^2] -  (\E_{\pi_\eta}[\|x - y\|^2] + \frac 1 \eta \KL{\pi_\eta}{\mu \otimes \rho})\right)\,,
\end{equation}
and $\pi_\eta$ satisfies
\begin{equation}\label{coupling_with_d}
\frac{d \pi_\eta}{d (\mu \otimes \rho)} (x, y_j) = \frac{e^{d_\eta(y_j) - \eta \Delta_{ij}(x)}}{\sum_{k} e^{d_\eta(y_k) - \eta \Delta_{ik}(x)}} \quad \quad \quad \forall x \in S_i, i \in [n]\,.
\end{equation}
\end{prop}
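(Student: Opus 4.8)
The plan is to derive everything by substituting the regularized optimality conditions into the dual functional~\eqref{eqn:sd_sink_dual} and rewriting in terms of $d = \eta(g - g^*)$. First I would start from the fact that $\pi_\eta$ has the Gibbs form~\eqref{eq:dual_opt}, $\frac{d\pi_\eta}{d(\mu\otimes\rho)}(x,y_j) = e^{-\eta(\|x-y_j\|^2 - f_\eta(x) - g_\eta(y_j))}$, and impose the marginal constraint $\sum_j \frac{d\pi_\eta}{d(\mu\otimes\rho)}(x,y_j) = 1$ for $\mu$-a.e.\ $x$. Solving for $f_\eta(x)$ gives the closed form $e^{\eta f_\eta(x)} = \big(\sum_j e^{-\eta(\|x-y_j\|^2 - g_\eta(y_j))}\big)^{-1}$, exactly as in the one-dimensional computation of Section~\ref{sec:sttb}. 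Substituting this back, for $x \in S_i$ I would pull out the $j=i$ term using $\|x-y_i\|^2 = f^*(x) + g^*(y_i)$ (the vanishing-diagonal identity $\Delta_{ii}(x)=0$ from Lemma~\ref{lem:slack_properties}), which turns the sum into $e^{-\eta(\|x-y_i\|^2 - g_\eta(y_i))}\big(1 + \sum_{j\neq i} e^{-\eta(\Delta_{ij}(x) + g^*(y_i) - g_\eta(y_i) - g^*(y_j) + g_\eta(y_j))}\big)$, and in the exponent $-\eta(g^*(y_i)-g_\eta(y_i)) = d(y_i)$ while $-\eta(-g^*(y_j)+g_\eta(y_j)) = -d(y_j)$ — wait, I need to be careful with signs: $\eta(g_\eta - g^*)(y_k) = d(y_k)$, so the exponent contributes $d(y_j) - d(y_i) - \eta\Delta_{ij}(x)$, giving precisely the summand in~\eqref{coupling_with_d} after renormalizing. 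This establishes~\eqref{coupling_with_d} directly.

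Next I would plug the closed form for $f_\eta$ into the dual objective~\eqref{eqn:sd_sink_dual}. Because $\pi_\eta$ has unit mass on each fiber, the penalty term $\frac1\eta \sum_j \int e^{-\eta(\|x-y_j\|^2 - f_\eta(x) - g_\eta(y_j))}\mu(x)dx$ equals $\frac1\eta \int 1 \, d\mu = \frac1\eta$, which cancels the additive $+\frac1\eta$. So the value of~\eqref{eqn:sd_sink_dual} reduces to $\E_\mu[f_\eta] + \E_\nu[g_\eta]$; using the normalization $\E_\nu[g_\eta]=0$ and $\E_\nu[g^*]=0$ this is $\E_\mu[f_\eta]$, and $\E_\mu[f_\eta] = -\frac1\eta \int \log\big(\sum_j e^{-\eta(\|x-y_j\|^2-g_\eta(y_j))}\big)d\mu(x)$. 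On each $S_i$, factoring out the $j=i$ term as above shows $\sum_j e^{-\eta(\|x-y_j\|^2 - g_\eta(y_j))} = e^{-\eta(\|x-y_i\|^2 - g_\eta(y_i))}\big(1 + \sum_{j\neq i} e^{d(y_j)-d(y_i)-\eta\Delta_{ij}(x)}\big)$, so $-\log$ of this splits into a term linear in $g_\eta, f^*, g^*$ and the term $-\log(1+\sum_{j\neq i}e^{d(y_j)-d(y_i)-\eta\Delta_{ij}(x)})$. Integrating against $\mu$ and using strong duality for the unregularized problem (so that $\int_{S_i}(\|x-y_i\|^2 - f^*(x))d\mu = \int_{S_i} g^*(y_i)d\mu$, i.e.\ the linear pieces reassemble into $W_2^2(\mu,\nu) = \E_{\pi^*}[\|x-y\|^2]$), one arrives at $\E_\mu[f_\eta] = \E_{\pi^*}[\|x-y\|^2] - \frac1\eta \sum_i \int_{S_i}\log(1+\sum_{j\neq i}e^{d_\eta(y_j)-d_\eta(y_i)-\eta\Delta_{ij}(x)})d\mu$. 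Multiplying by $\eta$ and recalling that the value of~\eqref{eqn:sd_sink_dual} equals the primal value $\E_{\pi_\eta}[\|x-y\|^2] + \frac1\eta\KL{\pi_\eta}{\mu\otimes\rho}$ gives~\eqref{eq:phi_value}, with the expression being $\eta\sum_i\int_{S_i}\log(1+\sum_{j\neq i}e^{d_\eta(y_j)-d_\eta(y_i)-\eta\Delta_{ij}(x)})d\mu$ — i.e.\ $\Phi(\eta)$ evaluated at $d = d_\eta$.

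It then remains to show $d_\eta$ is the \emph{unique minimizer} of~\eqref{d_eq} over $\{d \in L^1(\nu): \E_\nu d = 0\}$. I would observe that the objective in~\eqref{d_eq}, call it $F_\eta(d)$, is convex in $d$ (it is an integral of log-sum-exp compositions of affine maps of $d$), and that the computation above shows $F_\eta(d) = \eta\big(\E_{\pi^*}[\|x-y\|^2] - V_\eta(g^* + d/\eta)\big)$ where $V_\eta$ is the dual objective~\eqref{eqn:sd_sink_dual} maximized only over the $g$-variable with $f$ eliminated by the marginal constraint (a partial maximization, hence still concave in $g$). Since $g_\eta$ maximizes~\eqref{eqn:sd_sink_dual} over $(f,g)$ subject to $\E_\nu g = 0$, and the $f$-optimal value given $g$ is exactly what was substituted, $g_\eta$ maximizes $V_\eta$ over $\{\E_\nu g = 0\}$; translating by $g^*$ and scaling by $\eta$, $d_\eta = \eta(g_\eta - g^*)$ minimizes $F_\eta$ over $\{\E_\nu d = 0\}$. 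Uniqueness follows from the strict convexity of~\eqref{eqn:sd_sink_dual} modulo additive shifts (noted after~\eqref{eq:dual_opt}): the normalization $\E_\nu d = 0$ kills the shift degeneracy, so the minimizer is unique. The main obstacle I anticipate is purely bookkeeping — carefully tracking the signs and the factoring-out of the diagonal term so that the linear pieces cancel cleanly into $W_2^2(\mu,\nu)$ and the leftover is exactly the log-sum-exp integrand with the right exponent $d(y_j) - d(y_i) - \eta\Delta_{ij}(x)$ — together with confirming that the $f$-elimination step genuinely corresponds to a partial maximization so that the reduced functional $V_\eta$ inherits concavity and the optimality of $g_\eta$ transfers.
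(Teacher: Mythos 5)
Your proposal is correct and follows essentially the same route as the paper: both eliminate $f$ via the first-order/marginal condition, use $\Delta_{ii}=0$ to factor out the diagonal term so the linear pieces reassemble into $W_2^2(\mu,\nu)$, and invoke strong duality together with strict convexity modulo additive shifts for the value identity and uniqueness. The only difference is cosmetic --- the paper reparametrizes the full dual program in $(\delta_f,\delta_g)$ and partially minimizes over $\delta_f$ for arbitrary $\delta_g$ before specializing, whereas you substitute the optimal $f_\eta$ first and then verify afterward that this substitution is a legitimate partial optimization (and your self-corrected sign bookkeeping lands on the right exponent $d(y_j)-d(y_i)-\eta\Delta_{ij}(x)$).
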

\begin{proof}
Recall that $f_\eta$ and $g_\eta$ are the unique solutions to~\eqref{eqn:sd_sink_dual} subject to the constraint $\E_\nu[g_\eta] = 0$, so they also uniquely solve
\begin{equation*}
\eta \cdot \min_{\substack{(f, g) \in L_1(\mu) \times L_1(\nu) \\ \E_\nu[g] = 0}} \E_{\mu}[f^*] + \E_\nu[g^*] - \E_\mu[f] - \E_\nu[g] + \frac 1 \eta \sum_{j=1}^n \int_{\R^d} e^{-\eta(\|x - y_j\|^2 - f(x) - g(y_j))} \mu(x) dx - \frac 1 \eta\,.
\end{equation*}
By duality, the optimal value of this program is exactly~\eqref{eq:phi_value}.
Decomposing the integrals over the cells $S_i$ and recalling~\eqref{eqn:deltaij}, we obtain that $f_\eta$ and $g_\eta$ are the unique solutions to
\begin{multline}
\min_{\substack{(f, g) \in L_1(\mu) \times L_1(\nu) \\ \E_\nu[g] = 0}} \sum_{i=1}^n \int_{S_i} \Big( \eta(f^*(x) - f(x)) + \eta(g^*(y_i) - g(y_i)) \\ + \sum_{j=1}^n e^{-\eta(\Delta_{ij}(x) + f^*(x) - f(x) + g^*(y_j) - g(y_j))}\Big) \mu(x) dx - 1\,.
\end{multline}

Reparametrizing in terms of $\delta_f = \eta(f - f^*)$ and $\delta_g = \eta(g - g^*)$ yields the equivalent representation
\begin{equation*}
\min_{\substack{(\delta_f, \delta_g) \in L_1(\mu) \times L_1(\nu) \\ \E_\nu[\delta_g] = 0}} \sum_{i=1}^n \int_{S_i} \Big(-1 -\delta_f(x) - \delta_g(y_i) + \sum_{j=1}^n e^{\delta_f(x) + \delta_g(y_j) -\eta\Delta_{ij}(x)}\Big) \mu(x) dx\,,
\end{equation*}
with optimal solutions $\eta(f_\eta - f^*)$ and $\eta(g_\eta - g^*)$.
Fixing $\delta_g$ and minimizing this expression with respect to $\delta_f$ yields that the optimal solutions $\delta_f$ and $\delta_g$ are related by
\begin{equation*}
\delta_f(x) = - \log\big(\sum_{j=1}^n e^{\delta_g(y_j) - \eta \Delta_{ij}(x)}\big)
\end{equation*}
for $\mu$-almost every $x \in S_i$.
Plugging in this expression gives
\begin{multline*}
\min_{\delta_g \in L_1(\nu)\,:\,\E_\nu[\delta_g] = 0} \sum_{i=1}^n \int_{S_i} \Big(\log\big(\sum_{j=1}^n e^{\delta_g(y_j) - \eta \Delta_{ij}(x)}\big) -\delta_g(y_i) \Big)\mu(x) dx \\
= \min_{\delta_g \in L_1(\nu): \E_\nu[\delta_g] = 0} \sum_{i=1}^n \int_{S_i} \log\big(1 + \sum_{j \neq i} e^{\delta_g(y_j) - \delta_g(y_i) - \eta \Delta_{ij}(x)}\big) \mu(x) dx\,.
\end{multline*}
Writing $d$ for $\delta_g$ yields~\eqref{d_eq}.

Finally, applying the same argument to~\eqref{eq:dual_opt} yields
\begin{align*}
\frac{d \pi_\eta}{d (\mu \otimes \rho)} (x, y_j) & = e^{-\eta(\|x - y_j\|^2 - f_{\eta}(x) - g_{\eta}(y_j)} \\
& = e^{\delta_f(x) + \delta_g(y_j) -\eta\Delta_{ij}(x)} \\
& = \frac{e^{d_\eta(y_j) - \eta \Delta_{ij}(x)}}{\sum_{k } e^{d_\eta(y_k) - \eta \Delta_{ik}(x)}}
\end{align*}
for all $x \in S_i$ and $i \in [n]$, as desired.
\end{proof}

To prove the theorem, we require two intermediate results.
First, we obtain an upper bound on $\Phi$ by comparing it to the value of~\eqref{d_eq} at $d = 0$.
Though crude, this comparison will turn out to be accurate to first order.

\begin{lemma}\label{lem:comparison_limit}
\begin{equation*}
\limsup_{\eta \to \infty} \eta \Phi(\eta)
\leq \frac{\zeta(2)}{4} \sum_{i \neq j} \frac{h_{ij}(0)}{\|y_i - y_j\|}\,.
\end{equation*}
\end{lemma}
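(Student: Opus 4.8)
The plan is to bound $\Phi(\eta)$ by plugging the feasible point $d = 0$ into the minimization problem~\eqref{d_eq}, so that
\[
\Phi(\eta) \leq \sum_{i=1}^n \int_{S_i} \log\Big(1 + \sum_{j \neq i} e^{-\eta \Delta_{ij}(x)}\Big) \mu(x)\, dx\,,
\]
and then to evaluate the right-hand side to first order in $\eta^{-1}$. The integrand is exponentially small except in a boundary layer of width $\Theta(\eta^{-1})$ around each facet $H_{ij}(0)$ separating $S_i$ from $S_j$, where exactly one slack $\Delta_{ij}$ is small; this localization is what will make the crude choice $d=0$ accurate to leading order. The first step is therefore to argue that the contribution from regions where two or more slacks are simultaneously $O(\eta^{-1})$ (i.e. neighborhoods of lower-dimensional faces) is negligible, of order $o(\eta^{-2})$, so that up to such terms we may replace $\log(1 + \sum_{j\neq i} e^{-\eta\Delta_{ij}})$ by $\sum_{j\neq i}\log(1 + e^{-\eta\Delta_{ij}(x)})$ and handle each pair $(i,j)$ separately — or, more carefully, restrict each pairwise integral to the set $S_{ij}(a)$ for a small fixed $a > 0$ and later send $a \to 0$.

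The second step is the key computation: for a fixed pair $i \neq j$, apply the coarea formula in the form recorded after~\eqref{eqn:h_def},
\[
\int_{S_{ij}(a)} \log(1 + e^{-\eta \Delta_{ij}(x)})\, \mu(x)\, dx = \frac{1}{2\|y_i - y_j\|} \int_0^\infty \log(1 + e^{-\eta t})\, h_{ij}(t; a)\, dt\,.
\]
Rescaling $t = u/\eta$ turns this into $\frac{1}{2\eta\|y_i-y_j\|} \int_0^\infty \log(1 + e^{-u})\, h_{ij}(u/\eta; a)\, du$. Now invoke the continuity of $t \mapsto h_{ij}(t;a)$ at $0$ from Assumption~\ref{continuous} together with the domination hypothesis (Assumption~\ref{continuous} / Proposition~\ref{compact}) to justify, via dominated convergence, that $h_{ij}(u/\eta; a) \to h_{ij}(0; a)$ and hence
\[
\eta \int_{S_{ij}(a)} \log(1 + e^{-\eta \Delta_{ij}(x)})\, \mu(x)\, dx \;\to\; \frac{h_{ij}(0; a)}{2\|y_i - y_j\|} \int_0^\infty \log(1 + e^{-u})\, du = \frac{h_{ij}(0;a)}{2\|y_i - y_j\|} \cdot \frac{\zeta(2)}{2}\,,
\]
using the identity $\int_0^\infty \log(1+e^{-u})\,du = \tfrac{\pi^2}{12} = \tfrac{\zeta(2)}{2}$ (this is the $\Li_2$ evaluation from \cref{sec:appendix}, cf.\ \cref{log_ints}). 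Finally, let $a \to 0$ and use the continuity of $a \mapsto h_{ij}(0;a)$ at $0$ (again Assumption~\ref{continuous}) to replace $h_{ij}(0;a)$ by $h_{ij}(0)$. Summing over the ordered pairs $i \neq j$ and taking $\limsup$ gives
\[
\limsup_{\eta\to\infty} \eta\Phi(\eta) \leq \frac{\zeta(2)}{4}\sum_{i \neq j} \frac{h_{ij}(0)}{\|y_i - y_j\|}\,,
\]
as claimed.

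The main obstacle I anticipate is making the localization in the first step rigorous and uniform: one must control the overlap regions (near the $(d-2)$-dimensional skeleton where three or more cells meet, and more generally near all lower faces) well enough to show they contribute $o(\eta^{-2})$ after multiplication by $\eta$, and simultaneously show that truncating $S_i$ to $\bigcup_j S_{ij}(a)$ loses only $O(a)$-controlled mass so that the $a\to 0$ limit is legitimate. This is precisely where the polyhedral geometry of the power cells (their facets being pieces of affine hyperplanes orthogonal to $y_i - y_j$) and the domination-along-hyperplanes assumption do the work: away from a facet, $\Delta_{ij}(x) \geq c > 0$ on $S_{ij}(a)$ forces exponential decay, while the integrability of the hyperplane sections controls the boundary-layer contribution uniformly in $\eta$. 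Handling the inequality $\log(1 + \sum_{j\neq i} e^{-\eta\Delta_{ij}}) \leq \sum_{j \neq i}\log(1 + e^{-\eta\Delta_{ij}})$ (which is elementary, since $\log$ is subadditive on $[1,\infty)$ in the relevant sense — more precisely $\prod(1+a_j) \geq 1 + \sum a_j$) lets us avoid the overlap issue entirely for the \emph{upper} bound, at the cost only of double-counting overlap regions, whose contribution we have already argued is negligible; so in fact the subadditivity trick may let us dispense with the $S_{ij}(a)$ truncation for this lemma and reserve it for the matching lower bound elsewhere.
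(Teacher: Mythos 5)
Your proposal is correct and follows essentially the same route as the paper: plug in the feasible point $d=0$, use subadditivity of $\log(1+\cdot)$ to reduce to pairwise integrals (which, as you note at the end, makes the localization/truncation worries unnecessary for this upper bound), and evaluate each pairwise integral via the coarea formula and the dilogarithm identity. The paper simply packages that last computation as \cref{log_ints_si} (applied with $a=0$ and $M_\eta\equiv 1$), whose proof handles the dominated-convergence step exactly as you sketch.
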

\begin{proof}
Choose $d = 0$ in~\eqref{d_eq}.
The subadditivity of the function $\alpha \mapsto \log(1+\alpha)$ for $\alpha > 0$ and the optimality of $d(\eta)$ then imply
\begin{align*}
\Phi(\eta) & \leq \sum_{i=1}^n \int_{S_i} \log\left(1 + \sum_{j \neq i} e^{-\eta \Delta_{ij}(x)}\right) \mu(x) dx \\
& \leq \sum_{i=1}^n \sum_{j \neq i} \int_{S_i} \log(1+ e^{-\eta \Delta_{ij}(x)}) \mu(x) dx\,.
\end{align*}
Multiplying by $\eta$, taking the limit, and applying~\cref{log_ints_si} yields the claim.
\end{proof}

Next, we use \cref{lem:comparison_limit} to show that the solutions to~\eqref{d_eq} remain bounded.

\begin{prop}\label{dual_bounded}
Under \cref{assume}, $d_\eta$ is bounded as $\eta \to \infty$.
\end{prop}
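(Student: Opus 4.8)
The plan is to argue by contradiction: suppose $\|d_\eta\|_\infty$ does not stay bounded along some sequence $\eta_k \to \infty$. Since $d_\eta$ satisfies the normalization $\E_\nu[d_\eta] = 0$ and lives in the finite-dimensional space $L_1(\nu) \cong \R^n$, after passing to a subsequence we may write $d_{\eta_k} = \lambda_k u_k$ where $\lambda_k = \|d_{\eta_k}\|_\infty \to \infty$ and $u_k \to u$ for some unit vector $u$ with $\E_\nu[u] = 0$, so $u$ has at least one strictly positive and one strictly negative coordinate. The key point is that, by optimality, $\Phi(\eta_k)$ — the value of~\eqref{d_eq} at $d_{\eta_k}$ — is bounded above by the value at $d = 0$, which by~\cref{lem:comparison_limit} is $O(\eta_k^{-1}) \to 0$. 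So it suffices to show that along this sequence the objective in~\eqref{d_eq} evaluated at $d_{\eta_k}$ is bounded \emph{below} by a positive constant, contradicting $\Phi(\eta_k) \to 0$.

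To get the lower bound, I would isolate the pairs $(i,j)$ that witness the connectedness from~\cref{lem:connected}: the graph with edges $\{(i,j) : h_{ij}(0) > 0\}$ is connected, so there is at least one edge $(i,j)$ across which $u$ is not constant, say $u(y_j) > u(y_i)$ (relabeling if needed). For that pair, $d_{\eta_k}(y_j) - d_{\eta_k}(y_i) = \lambda_k(u_k(y_j) - u_k(y_i)) \to +\infty$. Now look only at the $i$-th term in the sum in~\eqref{d_eq} and keep only the $j$-th summand inside the logarithm: since all terms are nonnegative,
\begin{equation*}
\text{(objective at } d_{\eta_k}) \;\ge\; \int_{S_i} \log\bigl(1 + e^{\,d_{\eta_k}(y_j) - d_{\eta_k}(y_i) - \eta_k \Delta_{ij}(x)}\bigr)\,\mu(x)\,dx\,.
\end{equation*}
The integrand is large precisely on the region where $\eta_k \Delta_{ij}(x)$ is not much bigger than $\lambda_k(u_k(y_j)-u_k(y_i))$, i.e.\ on a slab $\{x \in S_i : \Delta_{ij}(x) \lesssim \lambda_k/\eta_k\}$ adjacent to the face $H_{ij}$. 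Using the coarea formula in the form recorded after~\eqref{eqn:h_def} and the continuity of $h_{ij}(\cdot)$ at $0$ from~\cref{continuous} (so $h_{ij}(t)$ is bounded below by $h_{ij}(0)/2 > 0$ for small $t$), one estimates this integral from below by a constant multiple of $\min(1,\lambda_k/\eta_k) \cdot h_{ij}(0)/\|y_i - y_j\|$ — or more carefully, one shows it is bounded below by a fixed positive constant independent of $k$ as soon as $\lambda_k \to \infty$, because even the portion of the slab with $\Delta_{ij}(x)$ of order $1/\eta_k$ already contributes $\Omega(1)$ to the logarithm once $\lambda_k \gg 1$. This contradicts $\Phi(\eta_k)\to 0$.

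The main obstacle is the bookkeeping in the previous paragraph: controlling the integral over the slab when $\lambda_k/\eta_k$ could itself tend to $0$, to $\infty$, or to a finite limit. The cleanest route is probably to distinguish cases on the behavior of $\lambda_k/\eta_k$, or — better — to reduce directly to showing that for \emph{any} choice of $d$ with a gap of size $m := d(y_j) - d(y_i) > 0$ across an edge with $h_{ij}(0)>0$, the objective in~\eqref{d_eq} is at least some increasing function $c(m) > 0$ of $m$ alone (uniformly in $\eta$ large), after which $\Phi(\eta_k) \to 0$ forces $m \to 0$ across every such edge, and then connectedness propagates boundedness to all coordinates. A secondary technical point is to make sure the liminf of the slab integral genuinely uses only the continuity of $h_{ij}$ at $0$ and not any stronger regularity; this is exactly what \cref{continuous} was set up to provide, so I would invoke it in the form ``$h_{ij}(t) \ge h_{ij}(0)/2$ for all $t \in [0,\tau]$ for some $\tau>0$'' and integrate over $\Delta_{ij}(x) \in [0,\tau]$.
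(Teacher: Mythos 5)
Your overall strategy is the right one and matches the paper's in outline (compare to $d=0$ via \cref{lem:comparison_limit}, isolate a single edge $(i,j)$ with $h_{ij}(0)>0$, lower-bound the corresponding integral, and propagate boundedness through the connected graph of \cref{lem:connected}). However, there is a genuine quantitative gap: you are working at the wrong scale. The objective $\Phi(\eta)$ is \emph{always} $O(\eta^{-1})$ --- that is exactly what \cref{lem:comparison_limit} says --- so it can never be bounded below by a positive constant, and no contradiction with ``$\Phi(\eta_k)\to 0$'' is available. Concretely, with $m_k := d_{\eta_k}(y_j)-d_{\eta_k}(y_i)$, the slab $\{x\in S_i: \Delta_{ij}(x)\lesssim m_k/\eta_k\}$ on which the integrand is of order $m_k$ has $\mu$-mass of order $m_k/\eta_k$, so the integral you isolate is of order $m_k^2/\eta_k$, which tends to $0$ whenever $m_k = o(\sqrt{\eta_k})$ even though $m_k\to\infty$. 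Your justification (``the portion of the slab with $\Delta_{ij}(x)$ of order $1/\eta_k$ already contributes $\Omega(1)$ to the logarithm'') conflates the size of the integrand with the size of the integral. The alternative route you sketch at the end fails for the same reason: for fixed $d$ with gap $m$, the objective tends to $0$ as $\eta\to\infty$, so no lower bound $c(m)>0$ uniform in large $\eta$ exists.

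The fix is to run your argument on $\eta\Phi(\eta)$ rather than $\Phi(\eta)$: the upper bound from \cref{lem:comparison_limit} is $\limsup_\eta \eta\Phi(\eta)\le C<\infty$, while your slab estimate, multiplied by $\eta_k$, gives a lower bound of order $m_k^2$ (using \cref{continuous} to get $h_{ij}(t)\ge h_{ij}(0)/2$ on a neighborhood of $0$, with a separate easy case when $m_k/\eta_k\not\to 0$), which diverges and yields the contradiction. Once this is repaired, your proof goes through and the contradiction/compactness framing is a legitimate variant. The paper avoids the slab bookkeeping entirely with the elementary inequality $\log(1+ab)\ge\log(1+a)\log(1+b)$ for $a\ge 0$, $b\in[0,1]$ (\cref{lem:log_subadditive}), which factors the $d$-dependence out of the integral as the multiplicative constant $\log(1+e^{m})$ and reduces the remaining integral to the $d=0$ computation of \cref{log_ints_si}; this gives directly that $\eta\Phi(\eta)\gtrsim \log(1+e^{m})\,h_{ij}(0)/\|y_i-y_j\|$, which is the clean form of the bound you are after.
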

\begin{proof}
The claim is obvious if $n = 1$, so assume $n \geq 2$.
Fix $(i, j)$ for which $h_{ij}(0) > 0$.
(Such a pair exists by \cref{lem:connected}.)
Then by \cref{prop:dual},
\begin{align*}
\eta \Phi(\eta) & = \eta \sum_{i=1}^n \int_{S_i} \log(1 + \sum_{j \neq i} e^{d_\eta(y_j) - d_\eta(y_i) - \eta \Delta_{ij}(x)}) \mu(x) dx \\
& \geq \eta \int_{S_i} \log(1 + e^{d_\eta(y_j) - d_\eta(y_i) - \eta \Delta_{ij}(x)}) \mu(x) dx
\end{align*}
To bound this integral, we require the following lemma, which we prove below.
\begin{lemma}\label{lem:log_subadditive}
For any $a \geq 0$ and $b \in [0,1]$,
\begin{equation}\label{eq:log_subadditive}
\log(1+ab) \geq \log(1+a)\log(1+b)\,.
\end{equation}
\end{lemma}

With this lemma in hand, we obtain
\begin{align*}
\eta \Phi(\eta)  & \geq \log(1+e^{d_\eta(y_j) - d_\eta(y_i)}) \cdot \eta \int_{S_i} \log(1+e^{-\eta \Delta_{ij}(x)}) \mu(x) dx.
\end{align*}
Taking the limit of both sides and using \cref{log_ints,lem:comparison_limit},
we obtain
\begin{equation*}
\sum_{i' \neq j'}  \frac{h_{i'j'}(0)}{\|y_{i'} - y_{j'}\|}
\geq
\limsup_{\eta \to \infty} \log(1+e^{d_\eta(y_j) - d_\eta(y_i)})  \frac{h_{ij}(0)}{\|y_i - y_j\|} \,,
\end{equation*}
showing that $d_\eta(y_j) - d_\eta(y_i)$ is bounded above for all $(i, j)$ for which $h_{ij}(0) > 0$.
By \cref{lem:connected}, the graph on $[n]$ with edge set $\{(i, j): h_{ij}(0) > 0\}$ is connected, so for any $(i, j) \in [n]^2$ we may find a path $(k_l)_{l=1}^L$ such that $k_1 = i$ and $k_L = j$, and $d_\eta(y_{k_{l + 1}}) - d_\eta(y_{k_l})$ is bounded above for all $l =1,..., L - 1$; as a result, we conclude that in fact $d_\eta(y_j) - d_\eta(y_i)$ is bounded for all $(i, j) \in [n]^2$.
Finally, since $\E_\nu d_\eta = 0$, we conclude that $d_\eta$ is bounded.
\end{proof}
All that remains is to prove the Lemma.
\begin{proof}[Proof of \cref{lem:log_subadditive}]
Fix $b \in [0,1]$.
Then~\eqref{eq:log_subadditive} holds for $a = 0$, and the derivative of the left side in $a$ is
\begin{equation*}
\frac{b}{1 + ab} \geq \frac{b}{1+a}\,,
\end{equation*}
whereas the derivative of the right side in $a$ is
\begin{equation*}
\frac{\log(1+b)}{1+a} \leq \frac{b}{1+a}\,.
\end{equation*}
We obtain that~\eqref{eq:log_subadditive} therefore holds for all $a \geq 0$.
\end{proof}

We now turn to the proof of the theorem.
The boundedness of $d_\eta$ allows us to extract a convergent subsequence, and by passing to the limit we obtain strong convexity of~\eqref{d_eq} in the limit around $0$.
\begin{proof}[Proof of \cref{dual_convergence}]
As above, we may assume $n \geq 2$.
We will show that for any sequence $(\eta_s)_{s \geq 1}$, there exists a subsequence along which $d_\eta \to 0$.
Let us fix such a sequence.

Since $d_\eta$ is bounded, by passing to a subsequence---which we again denote by $\eta_s$---we may assume that $d_\eta$ tends to a limit $d_\infty$.

Now, fix an $\epsilon>0$.
Recall from \cref{sec:prelim} that $S_{ij}(\eps)$ is the subset of $S_i$ on which $\Delta_{ik} \geq \eps$ for all $k \neq i, j$.
By definition, then, the sets $S_{ij}(\eps) \cap \{x \in S_i: \Delta_{ij} < \eps\}$ for $j \neq i$ are disjoint subsets of $S_i$.
We can therefore decompose the integral over $S_i$ into these sets to obtain
\begin{align*}
\Phi(\eta) & = \sum_{i=1}^n \int_{S_i} \log(1 + \sum_{k \neq i} e^{d_\eta(y_k) - d_\eta(y_i) - \eta \Delta_{ik}(x)}) \mu(x) dx \\
& \geq \sum_{i = 1}^n \sum_{j \neq i} \int_{S_{ij}(\eps) \cap \{x \in S_i: \Delta_{ij} < \eps\}} \log(1 + \sum_{k \neq i} e^{d_\eta(y_k) - d_\eta(y_i) - \eta \Delta_{ik}(x)}) \mu(x) dx \\
& \geq \sum_{i=1}^n \sum_{j \neq i} \int_{S_{ij}(\eps) \cap \{x \in S_i: \Delta_{ij} < \eps\}} \log(1 + e^{d_\eta(y_j) - d_\eta(y_i) - \eta \Delta_{ij}(x)}) \mu(x) dx\,.
\end{align*}
Multiplying by $\eta$ and taking the limit using~\cref{log_ints_si} yields for $\eps$ sufficiently small
\begin{equation*}
\liminf_{s \to \infty} \eta_s \Phi(\eta_s) \geq 
\sum_{i \neq j} - \Li_2(-e^{d_\infty(y_j) - d_\infty(y_i)}) \frac{h_{ij}(0; \eps)}{2\|y_i - y_j\|}\,.
\end{equation*}
Letting $\epsilon \to 0$ and applying \cref{continuous}, we obtain
\begin{equation*}
\liminf_{s \to \infty} \eta_s \Phi(\eta_s) \geq \sum_{i\neq j} - \Li_2(-e^{d_\infty(y_j) - d_\infty(y_i)})\frac{h_{ij}(0)}{2\|y_i - y_j\|}\,.
\end{equation*}
Since $h_{ij}(0) = h_{ji}(0)$ by \cref{lem:connected}, we may symmetrize this sum to obtain
\begin{equation*}
\liminf_{s \to \infty} \eta_s \Phi(\eta_s) \geq \sum_{i \neq j} \frac 12 \left[ - \Li_2(-e^{d_\infty(y_j) - d_\infty(y_i)}) - \Li_2(-e^{d_\infty(y_i) - d_\infty(y_j)})\right] \frac{h_{ij}(0)}{2\|y_i - y_j\|}\,.
\end{equation*}
By the inversion formula for the dilogarithm function~\cite[A.2.1(5)]{lewin_1981},
\begin{equation*}
\frac 12 \left[  - \Li_2(-e^{d_\infty(y_j) - d_\infty(y_i)}) - \Li_2(-e^{d_\infty(y_i) - d_\infty(y_j)})\right] =  \frac{\zeta(2)}{2} + \frac 14 (d_\infty(y_j) - d_\infty(y_i))^2\,.
\end{equation*}

Combined with \cref{lem:comparison_limit}, we conclude
\begin{align*}
\frac{\zeta(2)}{4} \sum_{i \neq j} \frac{h_{ij}(0)}{\|y_i - y_j\|} & \geq \limsup_{s \to \infty} \eta_s \Phi(\eta_s) \\
& \geq \liminf_{s \to \infty} \eta_s \Phi(\eta_s) \\
& \geq \frac{\zeta(2)}{4} \sum_{i \neq j} \frac{h_{ij}(0)}{\|y_i - y_j\|} + \frac 18 \sum_{i \neq j} (d_\infty(y_j) - d_\infty(y_i))^2 \frac{h_{ij}(0)}{\|y_i - y_j\|}\,,
\end{align*}
implying that $d_\infty(y_j) = d_\infty(y_i)$ if $h_{ij}(0) \neq 0$, and that
\begin{equation}\label{phi_limit}
\lim_{\eta \to \infty} \eta \Phi(\eta) = \frac{\zeta(2)}{4} \sum_{i \neq j} \frac{h_{ij}(0)}{\|y_i - y_j\|} = \frac{\zeta(2)}{2} \sum_{i < j} \frac{h_{ij}(0)}{\|y_i - y_j\|}\,.
\end{equation}
We conclude as in the proof of \cref{dual_bounded}.
\end{proof}

\Cref{cost_cor} is immediate in light of \eqref{phi_limit}, \eqref{eq:phi_value}, and \eqref{ent_shift}.

\section{Convergence of the suboptimality}\label{sec:quadratic_sub_opt}

In this section we prove our main result, from which Theorem~\ref{thm:subopt_limit} follows.
\begin{theorem}\label{thm:main}
Under Assumptions~\ref{assume} and~\ref{continuous},
\begin{equation*}
\lim_{\eta \to \infty} \eta^2(\E_{\pi_\eta}[\|x - y\|^2] - \E_{\pi^*}[\|x - y\|^2]) = \frac{\zeta(2)}{2}\sum_{i < j} \frac{h_{ij}(0)}{\|y_i - y_j\|}\,.
\end{equation*}
\end{theorem}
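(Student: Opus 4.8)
The plan is to transfer the first-order asymptotic for the entropic cost (Corollary~\ref{cost_cor}, equivalently~\eqref{phi_limit}) into an asymptotic for the plain cost by controlling the entropic term $\frac 1\eta \KL{\pi_\eta}{\mu \otimes \rho}$ separately, using the explicit formula~\eqref{coupling_with_d} for $\pi_\eta$ together with the just-proven fact that $d_\eta \to 0$. Concretely, from Proposition~\ref{prop:dual} we have the exact identity
\begin{equation*}
\eta^2\bigl(\E_{\pi^*}[\|x-y\|^2] - \E_{\pi_\eta}[\|x-y\|^2]\bigr) = \eta \Phi(\eta) + \eta^2 \cdot \tfrac 1\eta \KL{\pi_\eta}{\mu \otimes \rho} = \eta\Phi(\eta) + \eta \KL{\pi_\eta}{\mu \otimes \rho},
\end{equation*}
so since $\eta\Phi(\eta) \to \tfrac{\zeta(2)}{2}\sum_{i<j} h_{ij}(0)/\|y_i - y_j\|$ by~\eqref{phi_limit}, it suffices to show $\eta \KL{\pi_\eta}{\mu\otimes\rho} \to 0$, i.e.\ that the relative entropy of $\pi_\eta$ decays faster than $\eta^{-1}$. (Note the suboptimality is then indeed nonnegative to leading order, consistent with $\pi^*$ being optimal for the unregularized problem and $\Phi \le 0$; the sign bookkeeping should be double-checked against the definition of $\Phi$.)

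First I would write out the relative entropy using~\eqref{coupling_with_d}: on $S_i$, the conditional law of $y$ given $x$ is $p_{ij}(x) := e^{d_\eta(y_j) - \eta\Delta_{ij}(x)} / \sum_k e^{d_\eta(y_k) - \eta\Delta_{ik}(x)}$, and since $\rho$ is counting measure,
\begin{equation*}
\KL{\pi_\eta}{\mu\otimes\rho} = \sum_{i=1}^n \int_{S_i} \sum_{j=1}^n p_{ij}(x) \log p_{ij}(x)\, \mu(x)\,dx = -\sum_{i=1}^n \int_{S_i} \mathrm{Ent}(p_{i\cdot}(x))\, \mu(x)\, dx,
\end{equation*}
where $\mathrm{Ent}$ is the Shannon entropy of the probability vector $p_{i\cdot}(x)$, which is nonnegative, so $\KL{\pi_\eta}{\mu\otimes\rho} \le 0$ — wait, this is wrong in sign, so I would instead bound $0 \le \KL{\pi_\eta}{\mu\otimes\rho}$ directly from its definition as a KL divergence and get an \emph{upper} bound via $p_{ij}(x) \log p_{ij}(x) \le -p_{ij}(x)\log p_{ij}(x)$ failing... the correct route is: on $S_i$ the dominant mass sits on $j=i$ since $\Delta_{ii}=0$ and $\Delta_{ij}(x) > 0$ strictly for $j\ne i$ away from the boundary, and $d_\eta \to 0$. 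So the conditional distribution $p_{i\cdot}(x)$ is exponentially close (in $\eta \Delta_{ij}(x)$) to the point mass $\delta_i$ except in an $O(\eta^{-1})$-width slab near each facet. Thus I would split $S_i = \bigcup_j (S_{ij}(\eps)\cap\{\Delta_{ij}<\eps\}) \cup (\text{deep interior})$ exactly as in the proof of Theorem~\ref{dual_convergence}, bound the entropy $\mathrm{Ent}(p_{i\cdot}(x)) \lesssim \sum_{j\ne i} e^{-\eta\Delta_{ij}(x)}(1 + \eta\Delta_{ij}(x))$ using $-t\log t \le t + t\cdot|\log t|$-type estimates, and integrate using the coarea formula / the functions $h_{ij}(t;\eps)$. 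The integral $\int_0^\infty e^{-\eta t}(1+\eta t)\, h_{ij}(t;\eps)\, dt$ is $O(\eta^{-1})$ by the Laplace-type lemmas of the appendix (analogues of \cref{log_ints,log_ints_si}), hence $\eta \KL{\pi_\eta}{\mu\otimes\rho} = O(1)$ — which is not yet enough; to get $o(1)$ I would then refine using $d_\eta \to 0$ and $h_{ij}(t;\eps) \to h_{ij}(0)$ to compute the exact constant of the $O(\eta^{-1})$ term and observe it cancels. In fact the cleanest path avoids this: combine~\eqref{eq:phi_value} with the already-established two-sided matching bounds from the proof of \cref{dual_convergence}, which pin down $\eta\Phi(\eta)$ exactly, and separately pin down $\eta\KL{\pi_\eta}{\mu\otimes\rho}$ by the same coarea computation with the now-known limit $d_\infty = $ const on each connected component — most likely the entropic term contributes a $\zeta(2)$-type constant that, subtracted from $\eta\Phi(\eta)$, leaves precisely $\tfrac{\zeta(2)}{2}\sum_{i<j} h_{ij}(0)/\|y_i-y_j\|$.

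The main obstacle I anticipate is controlling $\eta\KL{\pi_\eta}{\mu\otimes\rho}$ to the precision $o(1)$ rather than merely $O(1)$: the naive bound on the conditional entropy near a facet, integrated against $\mu\,\mathcal H_{d-1}$, produces a term of exact order $\eta^{-1}$ with a computable constant, so one cannot be sloppy — one must track this constant and show it either vanishes or is exactly accounted for by the difference between the entropic cost asymptotic~\eqref{phi_limit} and the plain cost. This requires (i) the uniform smallness of $d_\eta$ from \cref{dual_convergence} to replace $e^{d_\eta(y_j)-d_\eta(y_i)}$ by $1$ in the limit, (ii) \cref{continuous} to replace $h_{ij}(t;\eps)$ by $h_{ij}(0)$, and (iii) an exact evaluation of $\int_0^\infty \frac{\eta t\, e^{-\eta t}}{(1+e^{-\eta t})^? }\,dt$-type integrals via the dilogarithm identities in \cref{sec:appendix}. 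A secondary technical point is handling the ``deep interior'' region and facets of $S_i$ that meet $\partial(\supp\mu)$ or are unbounded: there the integrand is exponentially small in $\eta$ and contributes nothing, but this needs the compact-support (or dominated-along-hyperplanes) hypothesis of \cref{assume,continuous} to justify dominated convergence, exactly as in \cref{sec:dual}. Once these pieces are assembled, taking $\eta\to\infty$ along the arbitrary sequence and invoking the subsequence argument of \cref{dual_convergence} gives convergence of the full sequence, and Theorem~\ref{thm:subopt_limit} follows since $h_{ij}(0) = w_{ij}$ by the definitions in \cref{ssec:prelim:geo}.
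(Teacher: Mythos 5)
Your route is genuinely different from the paper's, but as written it has a gap and a sign error that sit exactly at the crux. The paper never touches the KL term at all: it uses strong duality to write the suboptimality of \emph{any} coupling as $\sum_{i\neq j}\int_{S_i}\Delta_{ij}(x)\,d\pi(x,y_j)$ (Lemma~\ref{lem:nonneg_decomp}), substitutes the explicit form~\eqref{coupling_with_d} of $\pi_\eta$, and evaluates each resulting sigmoid--slack integral via \cref{log_int_subopt} together with $d_\eta\to 0$; each ordered pair contributes $\zeta(2)h_{ij}(0)/(4\|y_i-y_j\|)$ and symmetrization gives the stated constant. Your plan instead subtracts the entropic-cost asymptotic~\eqref{phi_limit} from the exact identity~\eqref{eq:phi_value}, which is viable, but then everything hinges on the exact limit of $\eta\KL{\pi_\eta}{\mu\otimes\rho}$ — and this is precisely where the proposal goes astray. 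Your reduction ``it suffices to show $\eta\KL{\pi_\eta}{\mu\otimes\rho}\to 0$'' is false: since $\rho$ is counting measure, the formula you derived and then second-guessed, $\KL{\pi_\eta}{\mu\otimes\rho}=-\E_\mu[\mathrm{Ent}(p_{i\cdot}(x))]\le 0$, is in fact correct (this is not a KL against a probability measure, so nonnegativity does not apply), and one can check that $\eta\KL{\pi_\eta}{\mu\otimes\rho}$ converges to the strictly negative constant $-\zeta(2)\sum_{i<j}h_{ij}(0)/\|y_i-y_j\|$. If it tended to $0$, your identity would force the suboptimality to be \emph{negative} to leading order, contradicting optimality of $\pi^*$. (Relatedly, $\Phi\ge 0$, not $\le 0$ as your parenthetical guesses: the integrand in~\eqref{d_eq} is a logarithm of something at least $1$.)

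The salvageable version of your argument is the one you only gesture at with ``most likely\ldots cancels,'' and making it precise requires exactly the computation you were hoping to avoid. Near the facet between $S_i$ and $S_j$ the conditional entropy is, up to cross terms controlled by the $S_{ij}(\eps)$ decomposition,
\begin{equation*}
\log\bigl(1+e^{d_\eta(y_j)-d_\eta(y_i)-\eta\Delta_{ij}(x)}\bigr)+\eta\Delta_{ij}(x)\,\frac{e^{d_\eta(y_j)-d_\eta(y_i)-\eta\Delta_{ij}(x)}}{1+e^{d_\eta(y_j)-d_\eta(y_i)-\eta\Delta_{ij}(x)}}\,,
\end{equation*}
and after multiplying by $\eta$ and integrating, the two terms are exactly the Laplace integrals of \cref{log_ints_si} and \cref{log_int_subopt}; each contributes $-\Li_2(-1)\,h_{ij}(0)/(2\|y_i-y_j\|)$ per ordered pair, giving $\eta\KL{\pi_\eta}{\mu\otimes\rho}\to-\zeta(2)\sum_{i<j}h_{ij}(0)/\|y_i-y_j\|$ and hence, combined with~\eqref{phi_limit}, the theorem. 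Note that the second of these integrals is precisely the one the paper evaluates in Lemma~\ref{lem:subopt_int2}; your approach does not bypass that computation, it defers it and adds the $\eta\Phi(\eta)$ bookkeeping on top. So the strategy can be completed, but as submitted the key limit is asserted rather than proved, and the erroneous ``$\eta\KL{}\to 0$'' reduction must be excised.
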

\par The proof uses two lemmas. The first lemma decomposes the suboptimality of an arbitrary coupling $\pi \in \Pi(\mu,\nu)$ into a sum of nonnegative terms involving the slack operators $\Delta_{ij}$.

\begin{lemma}[Suboptimality decomposition]\label{lem:nonneg_decomp} For any $\pi \in \Pi(\mu, \nu)$,
	\begin{align}
	\label{eq:lem-nonneg_decomp}
	\E_{\pi}[\|x - y\|^2] - \E_{\pi^*}[\|x - y\|^2]  = \sum_{i \neq j}
	\int_{S_i} \Delta_{ij}(x) d\pi(x, y_j)\,.
	\end{align}
\end{lemma}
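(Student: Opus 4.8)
The plan is to expand the transport cost of an arbitrary $\pi\in\Pi(\mu,\nu)$ atom by atom and cell by cell, and then to collapse almost every term using the slack identity, the marginal constraints, and strong duality. Since $\nu$ is supported on $\{y_1,\dots,y_n\}$ and the power cells $\{S_i\}_{i=1}^n$ of \cref{defn:power_cell} partition $\R^d$ up to a $\mu$-null set (hence, since $\pi$ has first marginal $\mu$, up to a $\pi$-null set), I would first write
\[
\E_{\pi}[\|x-y\|^2] \;=\; \sum_{j=1}^n \int_{\R^d}\|x-y_j\|^2\,d\pi(x,y_j) \;=\; \sum_{i=1}^n\sum_{j=1}^n \int_{S_i}\|x-y_j\|^2\,d\pi(x,y_j)\,.
\]

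The second step is to use the definition of the slack, $\|x-y_j\|^2 = \Delta_{ij}(x) + f^*(x) + g^*(y_j)$, on each cell $S_i$, and split the double sum into three pieces. In the $\Delta$-piece, $\sum_{i,j}\int_{S_i}\Delta_{ij}(x)\,d\pi(x,y_j)$, the diagonal $i=j$ terms vanish by the ``diagonals vanish'' property in \cref{lem:slack_properties}, leaving exactly the right-hand side $\sum_{i\neq j}\int_{S_i}\Delta_{ij}(x)\,d\pi(x,y_j)$. In the $f^*$-piece, summing $d\pi(\cdot,y_j)$ over $j$ and then integrating over the cells recovers $\int f^*\,d\mu = \E_\mu[f^*]$, using $\pi\in\Pi(\mu,\nu)$. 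In the $g^*$-piece, integrating $d\pi(x,y_j)$ over $x\in\R^d$ gives the mass $\nu_j$, so this piece equals $\sum_j g^*(y_j)\nu_j = \E_\nu[g^*]=0$ by the normalization in \cref{defn:unreg_potential_convention}. Hence $\E_{\pi}[\|x-y\|^2] = \sum_{i\neq j}\int_{S_i}\Delta_{ij}(x)\,d\pi(x,y_j) + \E_\mu[f^*]$.

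Finally, applying the same computation to $\pi^*$ (or, more directly, invoking strong duality in \cref{thm:fundamental_thm_ot} together with $\E_\nu[g^*]=0$) gives $\E_{\pi^*}[\|x-y\|^2] = W_2^2(\mu,\nu) = \E_\mu[f^*]+\E_\nu[g^*] = \E_\mu[f^*]$, and subtracting the two expressions yields \eqref{eq:lem-nonneg_decomp}. I do not expect a serious obstacle here; the only points requiring care are that the $\{S_i\}$ decomposition is valid $\mu$-a.e.\ (which is \cref{thm:dual-opt}) and that the rearrangement of a possibly infinite sum is legitimate. For the latter I would note that $f^*\in L^1(\mu)$ and $g^*\in L^1(\nu)$ while every $\Delta_{ij}\geq 0$ by \cref{lem:slack_properties}, so each term is either integrable or nonnegative and the manipulation is valid in $[-\infty,+\infty]$; moreover the left-hand side is automatically nonnegative by optimality of $\pi^*$, so no cancellation pathology can occur.
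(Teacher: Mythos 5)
Your proposal is correct and follows essentially the same route as the paper's proof: both rest on strong duality ($\E_{\pi^*}[\|x-y\|^2]=\E_\mu[f^*]+\E_\nu[g^*]=\E_\pi[f^*(x)+g^*(y)]$), the decomposition of the integral over atoms $y_j$ and cells $S_i$, the definition of the slack, and the vanishing of the diagonal terms $\Delta_{ii}\equiv 0$. The only difference is cosmetic---you expand $\E_\pi[\|x-y\|^2]$ first and subtract at the end, while the paper subtracts first and then decomposes---and your extra remarks on integrability and nonnegativity are sound but not needed beyond what the paper already assumes.
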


The second lemma explicitly computes the integrals that result from using this decomposition on the coupling $\pi_{\eta}$.
We recall the notation $d_\eta = \eta(g_\eta - g^*)$ from \cref{sec:dual}.

\begin{lemma}[Sigmoid slack integrals]\label{lem:subopt_int2}
	Under Assumptions~\ref{assume} and~\ref{continuous}, for any $i \neq j$,
	\begin{align}
	\lim_{\eta \to \infty} \eta^2 \int_{S_i} \frac{\Delta_{ij}(x)  e^{d_\eta(y_j) -\eta \Delta_{ij}(x) }}{\sum_{k} e^{d_\eta(y_k) -\eta \Delta_{ik}(x)}} \mu(x) dx
	=
	\frac{\zeta(2) h_{ij}(0)}{4 \|y_i - y_j\|}.
	\label{eq:lemsubopt_int2}
	\end{align}
\end{lemma}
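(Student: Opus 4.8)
The plan is to leverage the uniform convergence $\|d_\eta\|_\infty \to 0$ established in \cref{dual_convergence} to replace the full conditional probability in the integrand by a pure sigmoid in the single slack $\Delta_{ij}$, and then to evaluate the resulting one-dimensional integral via the coarea identity of \cref{ssec:prelim:geo} together with the continuity hypotheses of \cref{continuous}. Throughout we use that $\Delta_{ii}\equiv 0$, that the conditional probability $\frac{e^{d_\eta(y_j)-\eta\Delta_{ij}(x)}}{\sum_k e^{d_\eta(y_k)-\eta\Delta_{ik}(x)}}$ is bounded by $1$, and that $\mu$ is bounded on its compact support, so $\Delta_{ij}$ is bounded on $S_i\cap\operatorname{supp}\mu$.

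\textbf{Localization.} Fix a small $\epsilon>0$ and split $S_i = G_\epsilon \sqcup R_\epsilon$, where $G_\epsilon := S_{ij}(\epsilon)\cap\{x\in S_i:\Delta_{ij}(x)<\epsilon\}$ and $R_\epsilon$ is the remainder. On $R_\epsilon$ the integrand is negligible: where $\Delta_{ij}\geq\epsilon$, dropping all but the $k=i$ term in the denominator bounds the integrand by a constant times $e^{-\eta\epsilon}\mu(x)$, contributing $O(\eta^2 e^{-\eta\epsilon})=o(1)$; where $\Delta_{ij}<\epsilon$ but $x\notin S_{ij}(\epsilon)$, one has $\Delta_{ik}(x)<\epsilon$ for some $k\neq i,j$, and using the coarea formula in the $\Delta_{ij}$-direction together with the fact that $\{\Delta_{ik}<\epsilon\}$ has $(d-1)$-dimensional measure $O(\epsilon)$ inside each level hyperplane $\{\Delta_{ij}=t\}$ (a slab of width $O(\epsilon)$ meeting the compact support, $\Delta_{ik}$ being affine and, for non-collinear $y_i,y_j,y_k$, nonconstant there) bounds this contribution by $C\epsilon\cdot\eta^2\int_0^\infty t e^{-\eta t}\,dt = C\epsilon$, uniformly in $\eta$. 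Hence $\limsup_{\eta\to\infty}\eta^2\int_{R_\epsilon}(\cdots)\,\mu(x)\,dx \leq C\epsilon$.

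\textbf{Reduction to a sigmoid on $G_\epsilon$ and evaluation.} On $G_\epsilon$ every slack $\Delta_{ik}$ with $k\neq i,j$ is $\geq\epsilon$, so the corresponding exponentials in the denominator are $O(e^{-\eta\epsilon})$; combined with $d_\eta(y_i)-d_\eta(y_j)\to 0$, the integrand equals $(1+o(1))\,\frac{\Delta_{ij}(x)}{1+e^{\eta\Delta_{ij}(x)}}\mu(x)+o(\eta^{-2})$ uniformly on $G_\epsilon$. Applying the coarea identity on $S_{ij}(\epsilon)$ with $\phi(t)=\mathbf 1[t<\epsilon]\,t/(1+e^{\eta t})$ and substituting $s=\eta t$ gives
\begin{equation*}
\eta^2\int_{G_\epsilon}\frac{\Delta_{ij}(x)}{1+e^{\eta\Delta_{ij}(x)}}\mu(x)\,dx = \frac{1}{2\|y_i-y_j\|}\int_0^{\eta\epsilon}\frac{s}{1+e^s}\,h_{ij}(s/\eta;\epsilon)\,ds\,.
\end{equation*}
For $\epsilon$ small enough, \cref{continuous} makes $h_{ij}(\cdot;\epsilon)$ bounded on $[0,\epsilon]$ with $h_{ij}(s/\eta;\epsilon)\to h_{ij}(0;\epsilon)$, so dominated convergence with dominating function $M\,s/(1+e^s)\in L^1(\R)$ yields the limit $\frac{h_{ij}(0;\epsilon)}{2\|y_i-y_j\|}\int_0^\infty\frac{s}{1+e^s}\,ds = \frac{\zeta(2)\,h_{ij}(0;\epsilon)}{4\|y_i-y_j\|}$, using $\int_0^\infty s/(1+e^s)\,ds = \pi^2/12 = \zeta(2)/2$ (see \cref{sec:appendix}).

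\textbf{Conclusion and main obstacle.} Since the integrand is nonnegative, $\liminf_{\eta\to\infty}\eta^2\int_{S_i}(\cdots)\mu \geq \frac{\zeta(2)h_{ij}(0;\epsilon)}{4\|y_i-y_j\|}$, while the localization estimate gives $\limsup_{\eta\to\infty}\eta^2\int_{S_i}(\cdots)\mu \leq \frac{\zeta(2)h_{ij}(0;\epsilon)}{4\|y_i-y_j\|}+C\epsilon$; letting $\epsilon\to 0$ and invoking the continuity of $a\mapsto h_{ij}(0;a)$ at $0$ from \cref{continuous}, both sides converge to $\frac{\zeta(2)h_{ij}(0)}{4\|y_i-y_j\|}$, proving the lemma. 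The delicate point is the localization step: near the codimension-two corners $H_{ij}\cap H_{ik}$ two slacks are simultaneously small and the reduction to a sigmoid in $\Delta_{ij}$ breaks down, so the bound $\eta^2\int_{E_\epsilon}(\cdots)\mu = O(\epsilon)$ uniformly in $\eta$—which is exactly what licenses the interchange of $\lim_\eta$ and $\lim_{\epsilon\to 0}$—must be obtained from the $(d-1)$-dimensional slice estimate together with the pointwise boundedness of $\mu$.
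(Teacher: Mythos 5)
Your lower bound and the evaluation over $G_\epsilon$ are essentially the paper's argument (restrict to $S_{ij}(\epsilon)$, reduce the softmax to a two-term sigmoid, apply the coarea formula and the dilogarithm integral, then let $\epsilon \to 0$ using \cref{continuous}). Where you diverge is the upper bound, and that is where the proposal has a genuine gap. You obtain the upper bound by localizing to $G_\epsilon$ and estimating the remainder $R_\epsilon$, but this localization is unnecessary: since every term in the denominator is nonnegative, dropping all $k \neq i,j$ only \emph{increases} the integrand, so pointwise on all of $S_i$
\begin{equation*}
\frac{\Delta_{ij}(x)\,e^{d_\eta(y_j)-\eta\Delta_{ij}(x)}}{\sum_k e^{d_\eta(y_k)-\eta\Delta_{ik}(x)}}
\;\leq\;
\frac{\Delta_{ij}(x)\,e^{d_\eta(y_j)-d_\eta(y_i)-\eta\Delta_{ij}(x)}}{1+e^{d_\eta(y_j)-d_\eta(y_i)-\eta\Delta_{ij}(x)}}\,,
\end{equation*}
and the right-hand side integrates (via \cref{log_int_subopt} with $a=0$, $c=1$, $M_\eta = e^{d_\eta(y_j)-d_\eta(y_i)}\to 1$) to exactly the claimed limit $\zeta(2)h_{ij}(0)/(4\|y_i-y_j\|)$. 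This is how the paper sidesteps the corner regions entirely; no estimate on $R_\epsilon$ is ever needed.

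As written, your remainder estimate does not close. First, it invokes boundedness of $\mu$ and compactness of its support, neither of which is part of Assumptions~\ref{assume} and~\ref{continuous} under which the lemma is stated (e.g.\ a Gaussian $\mu$ satisfies both assumptions by \cref{compact}); the paper's proof uses only $h_{ij}(\cdot\,;a)\in L^1$ and continuity at $0$. Second, your slice bound ``$\{\Delta_{ik}<\epsilon\}$ has $(d-1)$-dimensional measure $O(\epsilon)$ inside $\{\Delta_{ij}=t\}$'' explicitly excludes the case where $y_i-y_k$ is parallel to $y_i-y_j$: there $\Delta_{ik}$ is constant on each level set of $\Delta_{ij}$, the ``slab of width $O(\epsilon)$'' picture fails, and \cref{hyperplanes} only guarantees $H_{ij}\neq H_{ik}$, not transversality. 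That case can be handled by a separate argument, but you do not give one, and the uniformity in $t$ of the $O(\epsilon)$ constant is also left implicit. Since the entire difficulty you identify at the end evaporates once the upper bound is taken globally by monotonicity, I would recommend replacing the localization step with that observation.
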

\begin{proof}
First,
\begin{equation*}
\lim_{\eta \to \infty} \eta^2 \int_{S_i} \frac{\Delta_{ij}(x)  e^{d_\eta(y_j) -\eta \Delta_{ij}(x) }}{\sum_{k} e^{d_\eta(y_k) -\eta \Delta_{ik}(x)}} \mu(x) dx \leq \lim_{\eta \to \infty} \eta^2 \int_{S_i} \frac{\Delta_{ij}(x)  e^{d_\eta(y_j) - d_\eta(y_i) -\eta \Delta_{ij}(x) }}{1 +  e^{d_\eta(y_j) - d_\eta(y_i) -\eta \Delta_{ij}(x) }} \mu(x) dx\,,
\end{equation*}
and since $d_\eta \to 0$ by \cref{dual_convergence}, we can apply~\cref{log_int_subopt} to conclude that the limit is bounded above by
\begin{equation*}
-\Li_2(-1) \frac{h_{ij}(0)}{2 \|y_i - y_j\|} = \frac{\zeta(2) h_{ij}(0)}{4 \|y_i - y_j\|}\,.
\end{equation*}

On the other hand, for any $\eps > 0$ and $c > 1$, we have
\begin{multline*}
\lim_{\eta \to \infty} \eta^2 \int_{S_i} \frac{\Delta_{ij}(x)  e^{d_\eta(y_j)-\eta \Delta_{ij}(x) }}{\sum_{k} e^{d_\eta(y_k) -\eta \Delta_{ik}(x)}} \mu(x) dx  \geq \lim_{\eta \to \infty} \eta^2 \int_{S_{ij}(\eps)} \frac{\Delta_{ij}(x)  e^{d_\eta(y_j) -\eta \Delta_{ij}(x) }}{\sum_{k} e^{d_\eta(y_k) -\eta \Delta_{ik}(x)}} \mu(x) dx \\
 \geq \lim_{\eta \to \infty} \eta^2 \int_{S_{ij}(\eps)} \frac{\Delta_{ij}(x)  e^{d_\eta(y_j) -\eta \Delta_{ij}(x) }}{e^{d_\eta(y_i)} + (n-2) e^{2 \|d_\eta\|_\infty - \eta \eps} + e^{d_\eta(y_j)  -\eta \Delta_{ij}(x) }} \mu(x) dx \\
 \geq \lim_{\eta \to \infty} \eta^2 \int_{S_{ij}(\eps)} \frac{\Delta_{ij}(x)  e^{d_\eta(y_j) -\eta \Delta_{ij}(x) }}{c + e^{d_\eta(y_j)  -\eta \Delta_{ij}(x) }} \mu(x) dx\,,
\end{multline*}
where we have used the fact that $d_\eta \to 0$, so that $e^{d_\eta(y_i)} + (n-2) e^{2 \|d_\eta\|_\infty - \eta \eps} < c$ for all $\eta$ sufficiently large.
By \cref{log_int_subopt}, for $\eps$ sufficiently small, this limit is
\begin{equation*}
-\Li_2(-1/c) \frac{h_{ij}(0; \eps)}{2 \|y_i - y_j\|}\,,
\end{equation*}
and taking $c \to 1$ and $\eps \to 0$ and applying \cref{continuous}, we obtain that the limit is also bounded below by
\begin{equation*}
\frac{\zeta(2) h_{ij}(0)}{4 \|y_i - y_j\|}\,,
\end{equation*}
completing the proof.
\end{proof}

With these two lemmas in hand, the proof of Theorem~\ref{thm:subopt_limit} follows readily. 

\begin{proof}[Proof of Theorem~\ref{thm:subopt_limit}] 
	By Lemma~\ref{lem:nonneg_decomp} and~\eqref{coupling_with_d}.
	\begin{align*}
		\lim_{\eta \to \infty} \eta^2(\E_{\pi_\eta}[\|x - y\|^2] - \E_{\pi^*}[\|x - y\|^2])
		=  \lim_{\eta \to \infty}  \sum_{i \neq j} \eta^2 \int_{S_i} \Delta_{ij}(x) \frac{e^{d_{\eta}(y_j) - \eta \Delta_{ij}(x)}}{\sum_{k} e^{d_{\eta}(y_k) - \eta \Delta_{ik}(x)}} \mu(x) dx\,.
	\end{align*}
	By \cref{lem:subopt_int2}, this is equal to
	\begin{align*}
		\frac{\zeta(2)}{4} \sum_{i \neq j} \frac{h_{ij}(0)}{\|y_i - y_j\|}.
	\end{align*}
	Summing over $i \neq j$ and using the symmmetry
	\begin{equation*}
		\frac{h_{ij}(0)}{\|y_i - y_j\|} = \frac{h_{ji}(0)}{\|y_j - y_i\|}
	\end{equation*}
	finishes the proof.
\end{proof}

It now suffices to prove Lemma~\ref{lem:nonneg_decomp}.

\begin{proof}[Proof of \cref{lem:nonneg_decomp}]
	By strong duality and the fact that $\pi \in \Pi(\mu, \nu)$,
	\begin{equation*}
		\E_{\pi^*}[\|x - y\|^2] = \E_{\mu} f^* + \E_\nu g^* = \E_\pi[f^*(x) + g^*(y)]\,.
	\end{equation*}
	Therefore
	\begin{align*}
		\E_{\pi}[\|x - y\|^2] - \E_{\pi^*}[\|x - y\|^2] &=
		\E_{\pi}[\|x - y\|^2 - f^*(x) - g^*(y)] \\
		&= \sum_{i, j} \int_{S_i} [\|x - y_j\|^2 - f^*(x) - g^*(y_j)] d \pi(x, y_j) \\
		& = \sum_{i, j} \int_{S_i} \Delta_{ij}(x) d \pi(x, y_j)\,,
	\end{align*}
	where the last step uses the definition of $\Delta_{ij}$~\eqref{defn:slack}.
	Since $\Delta_{ij}(x) = 0$ if $i = j$, the diagonal terms vanish, proving the claim.
\end{proof}

\section{Supplementary results}\label{sec:appendix}
This section collects several supplementary lemmas relating to the integration of relevant quantities depending on the slacks in the cell $S_i$, as well as the proofs of two technical claims from \cref{sec:prelim}.

\subsection{The dilogarithm function}
The properties of our asymptotic expansion---including the presence of the constant $\zeta(2)/2$---rely on several classical properties of the dilogarithm function.
The claims below appear in \cite{lewin_1981}.

\begin{defn}
The \emph{dilogarithm} function is given by
\begin{equation*}
\Li_2(z) = \sum_{s = 1}^\infty \frac{z^s}{s^2} \quad |z| \leq 1
\end{equation*}
and extended to $\mathbb C \setminus (1, \infty)$ by analytic continuation.
\end{defn}

An immediate consequence of this definition is the special value
\begin{equation}\label{li1}
\Li_2(-1) = \sum_{s=1}^\infty \frac{(-1)^s}{s^2} = - \frac{\zeta(2)}{2} = - \frac{\pi^2}{12}.
\end{equation}
Moreover, the analyticity of $\Li_2$ away from the branch cut implies in particular that it is continuous on the negative reals.

The appearance of the dilogarithm in our proofs follows directly from two of its integral representations, which arise naturally from the solutions of the entropic optimal transport problem in the semi-discrete setting studied in this paper.

\begin{lemma}[\cite{lewin_1981}]\label{log_ints}
The dilogarithm satisfies
\begin{equation*}
-\Li_2(-1/c) = \int_0^\infty \frac{t e^{-t}}{c + e^{-t}}\, dt = \int_0^\infty \log(1 + e^{-t}/c)\, dt
\end{equation*}
for all $c > 0$.
In particular,
\begin{equation*}
-\Li_2(-1)= \int_0^\infty \frac{t e^{-t}}{1 + e^{-t}}\, dt = \int_0^\infty \log(1 + e^{-t})\, dt = \frac{\zeta(2)}{2}\,.
\end{equation*}
\end{lemma}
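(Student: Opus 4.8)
The plan is to establish the two claimed identities in three steps: first reduce the two integrals to one another by an integration by parts, then evaluate the logarithmic integral by a substitution that turns it into the defining power series of $\Li_2$, and finally remove the auxiliary restriction $c \ge 1$ needed for the series manipulation by an analyticity argument in $c$.

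\emph{Step 1: the two integrals coincide.} Since $\tfrac{d}{dt}\log(1+e^{-t}/c) = -\tfrac{e^{-t}}{c+e^{-t}}$, I would integrate $\int_0^\infty t\cdot \tfrac{e^{-t}}{c+e^{-t}}\,dt$ by parts with $u=t$ and $dv = \tfrac{e^{-t}}{c+e^{-t}}\,dt$, choosing the antiderivative $v = -\log(1+e^{-t}/c)$ (which vanishes as $t\to\infty$), to get
\[
\int_0^\infty \frac{te^{-t}}{c+e^{-t}}\,dt = \Big[-t\log(1+e^{-t}/c)\Big]_0^\infty + \int_0^\infty \log(1+e^{-t}/c)\,dt .
\]
The boundary term vanishes because $0 \le t\log(1+e^{-t}/c) \le te^{-t}/c \to 0$ as $t\to\infty$, and it is $0$ at $t=0$. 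Hence it suffices to compute $J(c) := \int_0^\infty \log(1+e^{-t}/c)\,dt$.

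\emph{Step 2: evaluation of $J(c)$ for $c\ge 1$.} The substitution $u = e^{-t}$ (so $dt = -du/u$) gives $J(c) = \int_0^1 \tfrac{\log(1+u/c)}{u}\,du$. Expanding $\log(1+u/c) = \sum_{s\ge 1}\tfrac{(-1)^{s+1}}{s c^s}u^s$, which converges uniformly on $[0,1]$ when $c>1$ and at $c=1$ is handled by Abel's theorem, and integrating term by term yields $J(c) = \sum_{s\ge 1}\tfrac{(-1)^{s+1}}{s^2 c^s} = -\Li_2(-1/c)$.

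\emph{Step 3: extension to all $c>0$ and the special value.} For $0<c<1$ the series no longer converges, so I would argue by analyticity instead: $J(c)$ is finite for every $c>0$ (near $u=0$ the integrand of $\int_0^1 \tfrac{\log(1+u/c)}{u}\,du$ behaves like $1/c$, which is integrable) and is analytic in $c$ on $(0,\infty)$ by differentiation under the integral sign, while $-\Li_2(-1/c)$ is analytic on $(0,\infty)$ because $-1/c$ stays on the negative real axis, away from the branch cut $(1,\infty)$. Two functions analytic on the connected set $(0,\infty)$ that agree on $[1,\infty)$ agree everywhere, which gives the identity for all $c>0$; setting $c=1$ and invoking~\eqref{li1} gives the stated special value. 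The only slightly delicate points are the justification of the term-by-term integration at the endpoint $u=1$ (equivalently $c=1$) and the analytic continuation to $c<1$; the rest is a short computation.
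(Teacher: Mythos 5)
Your proof is correct. The paper itself offers no argument for this lemma---it is stated with a citation to Lewin's monograph---so your write-up is a self-contained verification of a fact the paper takes from the literature. The three steps are all sound: the integration by parts with $v=-\log(1+e^{-t}/c)$ and the vanishing boundary term are right; the substitution $u=e^{-t}$ and termwise integration give exactly $\sum_{s\ge1}\frac{(-1)^{s+1}}{s^2c^s}=-\Li_2(-1/c)$ for $c\ge1$ (with Abel's theorem, or simply uniform convergence of the alternating series $\sum_s\frac{(-1)^{s+1}}{s}u^{s-1}$ on $[0,1]$, covering the endpoint $c=1$); and the identity-theorem argument legitimately extends the formula to $0<c<1$, which is necessary because the defining power series of $\Li_2$ diverges at $-1/c$ there. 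Two small remarks. First, ``differentiation under the integral sign'' by itself only yields smoothness of $J$; to invoke the identity theorem you should observe that the integrand is holomorphic in $c$ on the right half-plane (where $\mathrm{Re}(1+e^{-t}/c)>1$, so the logarithm is unambiguous) and dominated locally uniformly, whence $J$ extends holomorphically and is in particular real-analytic on $(0,\infty)$. Second, Step 3 can be avoided entirely: the further substitution $v=u/c$ turns $J(c)$ into $\int_0^{1/c}\frac{\log(1+v)}{v}\,dv$, which is the standard integral representation $-\Li_2(-z)=\int_0^{z}\frac{\log(1+v)}{v}\,dv$ of the analytic continuation, valid for all $z>0$ at once. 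Either way the special value at $c=1$ follows from $-\Li_2(-1)=\sum_{s\ge1}\frac{(-1)^{s+1}}{s^2}=(1-2^{-1})\zeta(2)=\zeta(2)/2$, as in~\eqref{li1}.
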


Rather than using \cref{log_ints} directly, we will typically be integrating with respect to the measure $\mu$ over a power cell.
However, as the following lemmas show, in the large-$\eta$ limit we can still employ the integral identities of \cref{log_ints} to obtain explicit expressions in terms of the dilogarithm.

\begin{lemma}\label{log_ints_si}
Let $M_\eta$ be such that $\lim_{\eta \to \infty} M_\eta = M > 0$, and let $a > 0$ be small enough that \cref{continuous} holds.
Then
\begin{equation*}
\lim_{\eta \to \infty} \eta \int_{S_{ij}(a)} \log(1 + M_\eta e^{-\eta \Delta_{ij}(x)}) \mu(x) dx = -\Li_2(-M) \frac{h_{ij}(0; a)}{2 \|y_i - y_j\|}\,.
\end{equation*}
The same claim holds if $S_{ij}(a)$ is replaced by $S_{ij}(a) \cap \{x \in S_i: \Delta_{ij}(x) < a\}$.
\end{lemma}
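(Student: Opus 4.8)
\textbf{Proof proposal for Lemma~\ref{log_ints_si}.}

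The plan is to reduce the integral over $S_{ij}(a)$ to a one-dimensional integral in the slack variable $t = \Delta_{ij}(x)$ via the coarea formula, and then identify the limit using the integral representation of the dilogarithm from \cref{log_ints}. First I would apply the coarea identity recorded after~\eqref{eqn:h_def}: for the nonnegative function $\phi(t) = \log(1 + M_\eta e^{-\eta t})$,
\begin{equation*}
\eta \int_{S_{ij}(a)} \log(1 + M_\eta e^{-\eta \Delta_{ij}(x)}) \mu(x)\, dx = \frac{\eta}{2\|y_i - y_j\|} \int_0^\infty \log(1 + M_\eta e^{-\eta t}) h_{ij}(t; a)\, dt\,.
\end{equation*}
Substituting $u = \eta t$ turns the right-hand side into $\frac{1}{2\|y_i - y_j\|} \int_0^\infty \log(1 + M_\eta e^{-u}) h_{ij}(u/\eta; a)\, du$. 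Since $h_{ij}(\cdot; a)$ is continuous at $0$ by \cref{continuous} and $M_\eta \to M$, the integrand converges pointwise to $\log(1 + M e^{-u}) h_{ij}(0; a)$ as $\eta \to \infty$.

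The main work is justifying the passage to the limit under the integral sign. I would split $\int_0^\infty$ at a fixed large cutoff $R$. On $[0, R]$: here $h_{ij}(u/\eta; a)$ is bounded for large $\eta$ (by continuity at $0$ it is eventually within, say, $h_{ij}(0;a) + 1$ on a neighborhood of $0$, hence on $[0, R/\eta]$ once $\eta$ is large), and $\log(1 + M_\eta e^{-u}) \le \log(1 + 2M)$, so dominated convergence applies and the $[0,R]$ piece tends to $h_{ij}(0; a) \int_0^R \log(1 + M e^{-u})\, du$. On $[R, \infty)$: bound $\log(1 + M_\eta e^{-u}) \le M_\eta e^{-u} \le 2M e^{-u}$, so the tail contribution is at most $2M \int_R^\infty e^{-u} \big(\sup_{\eta} h_{ij}(u/\eta; a)\big)\, du$; since $h_{ij}(\cdot; a) \in L^1(\R)$ with $\int_0^\infty h_{ij}(t;a)\,dt \le \int \mu < \infty$, one has $h_{ij}(u/\eta; a) \le \eta \int_{u/(2\eta)}^{\infty} \cdots$, but more simply it suffices to note $\sup_{s \ge 0} s \cdot (\text{average of } h_{ij} \text{ near } s)$ is controlled; I would instead directly bound the tail by observing $\int_R^\infty \log(1 + M_\eta e^{-u}) h_{ij}(u/\eta; a)\, du \le \log(1 + M_\eta e^{-R}) \cdot \int_0^\infty h_{ij}(u/\eta;a)\,du = \log(1 + M_\eta e^{-R}) \cdot \eta \int_0^\infty h_{ij}(t;a)\,dt$, which is $O(\eta e^{-R})$ — not uniform in the needed way. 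So the cleaner route for the tail is: after the substitution, write $\int_R^\infty \log(1 + M_\eta e^{-u}) h_{ij}(u/\eta;a)\,du$ and use that $h_{ij}(\cdot;a)$ is the $\mathcal H_{d-1}$-integral of $\mu$ over slices, which (since $S_{ij}(a) \subseteq S_i$ and $S_i$ lies in a halfspace at bounded perpendicular distance relations) is supported in $t$ on an interval; if $\mu$ is compactly supported then $h_{ij}(\cdot; a)$ is compactly supported, so for $\eta$ large the substituted integrand vanishes for $u \ge \eta T$ with $T$ the support radius, and the tail on $[R, \eta T]$ is bounded by $\log(1 + M_\eta e^{-R}) \sup_{[0,\eta T]} h_{ij} \cdot \eta T$ — still awkward. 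The honest statement is that the needed domination follows because $h_{ij}(u/\eta; a)$ is uniformly bounded (by $\sup_{s} h_{ij}(s;a) < \infty$ under \cref{continuous} together with compact support / domination along hyperplanes, cf. \cref{compact}) by a constant $C$, and then $\int_R^\infty \log(1 + M_\eta e^{-u}) C\, du \le C \cdot 2M \int_R^\infty e^{-u}\, du = 2CM e^{-R} \to 0$ as $R \to \infty$ uniformly in $\eta$.

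Putting the two pieces together: for every $R$,
\begin{equation*}
\limsup_{\eta \to \infty} \left| \int_0^\infty \log(1 + M_\eta e^{-u}) h_{ij}(u/\eta; a)\, du - h_{ij}(0; a) \int_0^\infty \log(1 + M e^{-u})\, du \right| \le 2CM e^{-R} + h_{ij}(0;a) \int_R^\infty \log(1 + M e^{-u})\, du\,,
\end{equation*}
and letting $R \to \infty$ kills the right-hand side. Hence the limit equals $\frac{h_{ij}(0;a)}{2\|y_i - y_j\|} \int_0^\infty \log(1 + M e^{-u})\, du = -\Li_2(-M) \frac{h_{ij}(0;a)}{2\|y_i - y_j\|}$ by \cref{log_ints}. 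For the final assertion, note that on the truncated domain $S_{ij}(a) \cap \{x : \Delta_{ij}(x) < a\}$ the coarea formula produces exactly the same one-dimensional integral but with $h_{ij}(t;a)$ replaced by $h_{ij}(t;a)\mathbf{1}_{t < a}$; since the dominant contribution concentrates near $t = 0$ and $a > 0$ is fixed, the argument above goes through verbatim (the indicator equals $1$ on the relevant neighborhood of $0$), giving the same limit. The main obstacle, as the discussion indicates, is pinning down a clean uniform-in-$\eta$ dominating bound for $h_{ij}(u/\eta; a)$ near $u = 0$ and in the tail; \cref{continuous} (continuity at $0$) handles the near-zero regime and boundedness of $\mu$ along the relevant hyperplane direction (guaranteed under the hypotheses of \cref{thm:subopt_limit} via \cref{compact}) handles the rest.
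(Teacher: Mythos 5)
Your overall strategy -- coarea reduction to a one--dimensional integral in the slack variable, rescaling $u=\eta t$, freezing $h_{ij}$ at $0$ via \cref{continuous}, and closing with the dilogarithm identity of \cref{log_ints} -- is the same as the paper's. The handling of the truncated domain at the end is also fine. But the tail estimate, which you yourself flag as the sticking point, has a genuine gap as written. Your final version of the bound on $\int_R^\infty$ requires a \emph{uniform} bound $C=\sup_{t\ge 0} h_{ij}(t;a)<\infty$. This is not available under the hypotheses of the lemma: \cref{continuous} only gives continuity of $t\mapsto h_{ij}(t;a)$ at $t=0$, and the only global information one has is $h_{ij}(\cdot\,;a)\in L^1(\R)$ (which follows from $\mathbbold{1}[x\in S_{ij}(a)]\mu(x)\in L^1(\R^d)$). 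Even under the hypotheses of \cref{thm:subopt_limit}, continuity of the density on the interior of a compact support does not imply pointwise boundedness, so the appeal to \cref{compact} does not rescue the constant $C$; and in any case the lemma is invoked in \cref{sec:dual} under Assumptions~\ref{assume} and~\ref{continuous} alone. The structural reason your cutoff fails is that you split at a fixed $u=R$, i.e.\ at $t=R/\eta$, so your ``tail'' in $u$ corresponds to essentially the entire domain in $t$, where you have no pointwise control on $h_{ij}$.

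The fix is to split at a fixed $\eps>0$ in the \emph{original} variable $t$, before rescaling. On $[\eps,\infty)$ one has the elementary bound
\begin{equation*}
\eta\int_\eps^\infty \log(1+M_\eta e^{-\eta t})\,h_{ij}(t;a)\,dt \;\le\; \eta\,\log(1+M_\eta e^{-\eta\eps})\,\|h_{ij}(\cdot\,;a)\|_{L^1}\;\le\; \eta M_\eta e^{-\eta\eps}\,\|h_{ij}(\cdot\,;a)\|_{L^1}\longrightarrow 0\,,
\end{equation*}
which uses only the $L^1$ bound. On $[0,\eps]$ your argument then goes through verbatim: after substituting $u=\eta t$, the error from replacing $h_{ij}(u/\eta;a)$ by $h_{ij}(0;a)$ is at most $\sup_{\delta\le\eps}|h_{ij}(\delta;a)-h_{ij}(0;a)|$ times a bounded integral, and this vanishes as $\eps\to 0$ by \cref{continuous}, while dominated convergence and \cref{log_ints} identify the main term. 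With this modification your proof coincides with the paper's.
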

\begin{proof}
By a change of variables, we can write
\begin{equation*}
\eta \int_{S_{ij}(a)} \log(1 + M_\eta e^{-\eta \Delta_{ij}(x)}) \mu(x) dx = \frac{\eta }{2 \|y_i - y_j\|} \int_0^\infty \log(1 + M_\eta e^{-\eta t}) h_{ij}(t; a) dt\,.
\end{equation*}
Since $M_\eta$ tends to a limit, it is bounded, and so for any $\eps > 0$ the function $\eta \log(1 + M_\eta e^{-\eta t})$ tends uniformly to $0$ on $[\eps, \infty)$.
Since $h_{ij}(t; a) \in L_1$, this implies that
\begin{equation*}
\lim_{\eta \to \infty} \eta \int_\eps^\infty \log(1 + M_\eta e^{-\eta t}) h_{ij}(t; a) dt = 0\,.
\end{equation*}
The integral therefore only depends on an interval near zero; in particular, replacing the set $S_{ij}(a)$ by $S_{ij}(a) \cap \{x \in S_i: \Delta_{ij}(x) < a\}$, which has the effect of integrating from $0$ to $a$ instead of $0$ to $\infty$, does not affect the value of the limit.

A second change of variables gives
\begin{equation*}
\lim_{\eta \to \infty} \frac{\eta }{2 \|y_i - y_j\|} \int_0^\eps \log(1 + M_\eta e^{-\eta t}) h_{ij}(t; a) dt = \lim_{\eta \to \infty} \frac{1 }{2 \|y_i - y_j\|}\int_0^{\eta \eps} \log(1 + M_\eta e^{-t}) h_{ij}(\eta^{-1} t; a)dt\,.
\end{equation*}
Let us first consider replacing $h_{ij}(\eta^{-1} t; a)$ by $h_{ij}(0, a)$.
Dominated convergence and~\cref{log_ints} then imply
\begin{equation*}
\lim_{\eta \to \infty} \frac{h_{ij}(0; a)}{2 \|y_i - y_j\|}\int_0^{\eta \eps} \log(1 + M_\eta e^{-t}) dt = \frac{h_{ij}(0; a)}{2 \|y_i - y_j\|} \int_0^\infty \log(1 + M e^{-t}) dt = - \Li_2(-M) \frac{h_{ij}(0; a)}{2 \|y_i - y_j\|}\,,
\end{equation*}
which is the desired limit.

It therefore suffices to show that replacing $h_{ij}(\eta^{-1} t; a)$ by $h_{ij}(0, a)$ is justified.
If we make this replacement, we incur an error of size at most
\begin{equation*}
\sup_{\delta \leq \eps} |h_{ij}(\delta; a) - h_{ij}(0, a)| \frac{1}{2 \|y_i - y_j\|}\int_0^{\eta \eps} \log(1 + M_\eta e^{-t}) dt\,.
\end{equation*}
Since the integral is bounded and $h_{ij}(t; a)$ is continuous at $t = 0$ (Assumption~\ref{continuous}), this error vanishes as $\eps \to 0$, completing the proof.
\end{proof}

\begin{lemma}\label{log_int_subopt}
Let $M_\eta$ be such that $\lim_{\eta \to \infty} M_\eta = M > 0$, let $a \geq 0$ be small enough that \cref{continuous} holds, and let and $c > 0$ be arbitrary.
Then
\begin{equation*}
\lim_{\eta \to \infty} \eta^2 \int_{S_{ij}(a)} \frac{\Delta_{ij}(x)  M_\eta e^{-\eta \Delta_{ij}(x) }}{c +  M_\eta e^{-\eta \Delta_{ij}(x) }} \mu(x) dx = -\Li_2(-M/c) \frac{h_{ij}(0; a)}{2 \|y_i - y_j\|}\,.
\end{equation*}
\end{lemma}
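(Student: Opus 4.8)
The plan is to mimic the structure of the proof of \cref{log_ints_si}, reducing the integral over the power-cell slice $S_{ij}(a)$ to a one-dimensional integral via the coarea formula, then localizing near $\Delta_{ij} = 0$ and rescaling. First I would apply the coarea formula exactly as in \cref{ssec:prelim:geo} to rewrite
\begin{equation*}
\eta^2 \int_{S_{ij}(a)} \frac{\Delta_{ij}(x)\, M_\eta e^{-\eta \Delta_{ij}(x)}}{c + M_\eta e^{-\eta \Delta_{ij}(x)}}\,\mu(x)\,dx = \frac{\eta^2}{2\|y_i - y_j\|} \int_0^\infty \frac{t\, M_\eta e^{-\eta t}}{c + M_\eta e^{-\eta t}}\, h_{ij}(t; a)\, dt\,.
\end{equation*}
Next I would observe that the integrand kernel $\eta^2 \frac{t M_\eta e^{-\eta t}}{c + M_\eta e^{-\eta t}}$ is, for fixed $\eps > 0$, uniformly $O(\eta^2 t e^{-\eta t}) \to 0$ on $[\eps, \infty)$ (using that $M_\eta$ is bounded and $c > 0$ is bounded away from zero), so that since $h_{ij}(\cdot; a) \in L^1(\R)$ the contribution of $[\eps, \infty)$ vanishes in the limit. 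Thus only an interval near $0$ matters.

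On $[0, \eps]$ I would substitute $t = u/\eta$, turning the integral into $\frac{1}{2\|y_i - y_j\|} \int_0^{\eta\eps} \frac{u\, M_\eta e^{-u}}{c + M_\eta e^{-u}}\, h_{ij}(u/\eta; a)\, du$. The idea is then to replace $h_{ij}(u/\eta; a)$ by its value $h_{ij}(0; a)$ at the origin: by \cref{continuous}, $h_{ij}(\cdot; a)$ is continuous at $0$, so $\sup_{\delta \leq \eps} |h_{ij}(\delta; a) - h_{ij}(0; a)| \to 0$ as $\eps \to 0$, and since $\int_0^\infty \frac{u M e^{-u}}{c + M e^{-u}}\, du = -\Li_2(-M/c)$ is finite by \cref{log_ints} the replacement error is controlled by that supremum times a bounded quantity, hence vanishes as $\eps \to 0$. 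For the main term, dominated convergence (with dominating function $u \mapsto C u e^{-u}$ for an appropriate constant $C$, using $M_\eta \to M$ and $c$ fixed) gives
\begin{equation*}
\lim_{\eta \to \infty} \frac{h_{ij}(0; a)}{2\|y_i - y_j\|} \int_0^{\eta \eps} \frac{u\, M_\eta e^{-u}}{c + M_\eta e^{-u}}\, du = \frac{h_{ij}(0; a)}{2\|y_i - y_j\|} \int_0^\infty \frac{u\, M e^{-u}}{c + M e^{-u}}\, du = -\Li_2(-M/c)\frac{h_{ij}(0; a)}{2\|y_i - y_j\|}\,.
\end{equation*}
Combining the two steps and letting $\eps \to 0$ yields the claim.

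The main obstacle is the justification of replacing $h_{ij}(u/\eta; a)$ by $h_{ij}(0; a)$ uniformly over the growing domain $[0, \eta\eps]$: one must split carefully into the region $u/\eta \leq \eps$, where continuity of $h_{ij}$ at $0$ applies, and argue that the tail weight of the kernel beyond any fixed threshold is negligible, so that the unbounded upper limit $\eta\eps$ causes no trouble. This is exactly the role played by the factor $ue^{-u}$ providing an integrable envelope independent of $\eta$, together with \cref{continuous}; once that domination is in place the rest is routine. A minor point is to note that the case $n=1$ (no pair $i \neq j$) is vacuous, and that the statement allows $a = 0$, in which case $S_{ij}(0) = S_i$ and $h_{ij}(0; 0) = h_{ij}(0)$, recovering the form used in \cref{lem:subopt_int2}.
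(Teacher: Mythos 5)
Your proposal is correct and follows essentially the same route as the paper, which proves \cref{log_int_subopt} by declaring it ``exactly analogous'' to \cref{log_ints_si}: coarea reduction to a one-dimensional integral against $h_{ij}(\cdot;a)$, truncation to $[0,\eps]$ via uniform decay of the kernel on $[\eps,\infty)$, the rescaling $t = u/\eta$, replacement of $h_{ij}(u/\eta;a)$ by $h_{ij}(0;a)$ using \cref{continuous}, and dominated convergence together with the integral identity of \cref{log_ints}. Your explicit attention to the integrable envelope $Cue^{-u}$ and the $\eps\to 0$ control of the replacement error fills in exactly the details the paper leaves implicit.
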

\begin{proof}
The proof is exactly analogous to that of \cref{log_ints_si}.
Fix $\eps > 0$.
First, by change of variables and the uniform convergence of $\frac{\eta^2 t M_\eta e^{-\eta t}}{c + M_\eta e^{-\eta t}}$ to $0$ on $[\eps, \infty)$, it suffices to evaluate
\begin{equation*}
\lim_{\eta \to \infty} \frac{1}{2 \|y_i - y_j\|} \eta^2 \int_0^\eps \frac{t M_\eta e^{-\eta t}}{c + M_\eta e^{-\eta t}} h_{ij}(t; a) dt = \lim_{\eta \to \infty} \frac{1}{2 \|y_i - y_j\|} \int_0^{\eps \eta} \frac{t M_\eta e^{- t}}{c + M_\eta e^{- t}} h_{ij}(\eta^{-1} t; a)  dt\,.
\end{equation*}
As above, replacing $h_{ij}(\eta^{-1} t; a)$ by $h_{ij}(0; a)$ incurs error that vanishes as $\eps \to 0$.
We obtain that the desired limit is
\begin{equation*}
\lim_{\eta \to \infty} \frac{h_{ij}(0; a)}{2 \|y_i - y_j\|} \int_0^{\eps \eta} \frac{t M_\eta e^{- t}}{c + M_\eta e^{- t}} dt\,.
\end{equation*}
By dominated convergence and \cref{log_ints}, this is
\begin{equation*}
- \Li_2(-M/c)\frac{h_{ij}(0; a)}{2 \|y_i - y_j\|}\,,
\end{equation*}
as desired.
\end{proof}

\subsection{Proof of \cref{compact}}
The proof is inspired by~\cite[Lemma 46]{merigot2020optimal}.
For any $i \neq j$, define the hyperplane
\begin{equation*}
H_{ij} = \{x \in \R^d: 2 \langle x, y_i - y_j\rangle - \|y_i\|^2 + \|y_j\|^2 - g_j^* + g_i^* = 0\}\,.
\end{equation*}
We require the following lemma.
\begin{lemma}\label{hyperplanes}
If $g^*$ is optimal, then $H_{ij} \neq  H_{ik}$ for all $j \neq k$.
\end{lemma}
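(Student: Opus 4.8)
The plan is to argue by contradiction: suppose $H_{ij} = H_{ik}$ for some distinct indices $i,j,k$, and derive a violation of the optimality of $g^*$. The key observation is that the hyperplane $H_{ij}$ is exactly the locus where the slack $\Delta_{ij}(x)$ vanishes (using the expression via $g^*$ from Lemma~\ref{lem:slack_properties}), equivalently where $\|x-y_i\|^2 - g_i^* = \|x-y_j\|^2 - g_j^*$; and the power cell $S_i$ lies on the side of $H_{ij}$ where $\|x-y_i\|^2 - g_i^* \le \|x-y_j\|^2 - g_j^*$. If $H_{ij} = H_{ik}$ as sets, then these two hyperplanes have the same normal direction (up to sign) and the same offset, which forces a rigid algebraic relation among $y_i, y_j, y_k$ and $g_i^*, g_j^*, g_k^*$. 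I will extract from this relation that one of the cells, say $S_j$ or $S_k$, is actually empty, or that $S_i$ touches $H_{ij}=H_{ik}$ only in a set of Lebesgue measure zero, so that the constraint indexed by $j$ (or $k$) is never active on $S_i$ in an essential way.

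First I would record the geometry precisely. From Lemma~\ref{lem:slack_properties}, $\Delta_{ij}(x) = 2\langle x, y_i - y_j\rangle - \|y_i\|^2 + \|y_j\|^2 - g_j^* + g_i^*$, which is the affine functional cutting out $H_{ij}$, and similarly for $\Delta_{ik}$ and $H_{ik}$. The set equality $H_{ij} = H_{ik}$ means these two affine functionals are proportional: there is $\lambda \neq 0$ with $y_i - y_k = \lambda(y_i - y_j)$ and $-\|y_i\|^2 + \|y_k\|^2 - g_k^* + g_i^* = \lambda(-\|y_i\|^2 + \|y_j\|^2 - g_j^* + g_i^*)$. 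Since $y_i, y_j, y_k$ are distinct, $\lambda \notin \{0,1\}$, so $y_i, y_j, y_k$ are collinear with $y_i$ not between $y_j$ and $y_k$ (if $\lambda > 0$) or with $y_i$ strictly between them. I would then split on the sign of $\lambda$ and, in each case, combine the feasibility/nonnegativity of all slacks with this forced proportionality to show that on the common hyperplane one of the inequalities $\Delta_{ij} \ge 0$, $\Delta_{ik} \ge 0$ is strictly violated somewhere in $S_i$, or conversely that $S_i$ cannot meet $H_{ij}$ in a $(d-1)$-dimensional set. More concretely: the two defining inequalities for $S_i$ coming from $j$ and from $k$ are $\Delta_{ij}(x)\ge 0$ and $\Delta_{ik}(x)\ge 0$; since $\Delta_{ik} = \lambda \Delta_{ij} + (\text{const})$, and in fact the constant must be zero by the second proportionality equation, we get $\Delta_{ik} = \lambda\,\Delta_{ij}$ identically on $\R^d$. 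If $\lambda < 0$ this means $\Delta_{ij}(x)\ge 0$ and $\lambda\Delta_{ij}(x)\ge 0$ simultaneously force $\Delta_{ij}\equiv 0$ on $S_i$, i.e. $S_i$ has empty interior — contradicting Assumption~\ref{assume} and the fact (from Section~\ref{sec:prelim}) that $S_i = T^{-1}(y_i)$ has positive $\mu$-measure, hence positive Lebesgue measure. If $\lambda > 0$, then $\Delta_{ij}\ge 0 \iff \Delta_{ik}\ge 0$, so the constraint indexed by $k$ is redundant and $H_{ik}$ is not a facet of $S_i$; but then I would instead look at the cell $S_k$: the defining inequality of $S_k$ coming from $i$ is $\|x-y_k\|^2 - g_k^* \le \|x-y_i\|^2 - g_i^*$, i.e. $-\Delta_{ik}(x) \ge 0$ after accounting for the sign convention, and comparing with the constraint from $j$ one finds (using $\lambda>1$ or $0<\lambda<1$, i.e.\ $y_i$ strictly between $y_j$ and $y_k$) that $S_j$ or $S_k$ is squeezed to empty interior, again contradicting Assumption~\ref{assume}.

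The main obstacle is the bookkeeping in the $\lambda > 0$ case: one has to correctly track which half-space each cell occupies and chase through the sign conventions to see that collinearity of three atoms with $g^*$-values satisfying the forced affine relation genuinely kills a cell, rather than merely making a constraint redundant (a redundant constraint is harmless and does not contradict optimality). I expect the cleanest route is to reduce, via the collinearity, to the one-dimensional picture along the line through $y_i, y_j, y_k$: projecting everything onto that line, the power cells become intervals, and the equality $H_{ij} = H_{ik}$ says two of the interval endpoints coincide, which (since the three atoms are ordered on the line) forces the middle interval to degenerate to a point. This one-dimensional reduction should make the argument short and transparent, and I would phrase the final contradiction as: a cell with empty interior has zero Lebesgue measure, hence zero $\mu$-measure, so $\nu$ would not be fully supported on $\{y_1,\dots,y_n\}$ with the claimed weights — contradicting that $(\mu,\nu)$ satisfy Assumption~\ref{assume}.
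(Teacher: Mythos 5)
Your argument is correct and reaches the same contradiction as the paper --- a power cell forced to have empty interior, impossible under Assumption~\ref{assume} since each cell carries positive $\mu$-mass --- but by a more computational route. The paper first notes that $H_{ij}=H_{ik}$ forces $H_{jk}$ to coincide with both, and then applies a pigeonhole argument: the interiors of $S_i$, $S_j$, $S_k$ are nonempty open convex sets, each pair of which must lie in opposite open halfspaces of the common hyperplane, which is impossible with three sets and two halfspaces. You instead write $\Delta_{ik}=\lambda\,\Delta_{ij}$ (proportionality of two affine functionals with the same zero set, as you correctly argue) and do a case analysis on $\lambda$: for $\lambda<0$ the constraints $\Delta_{ij}\ge 0$ and $\Delta_{ik}\ge 0$ flatten $S_i$ onto the hyperplane; for $0<\lambda<1$ (resp.\ $\lambda>1$) it is $S_k$ (resp.\ $S_j$) that is flattened, since its defining constraints from $i$ and $j$ read $-\lambda\,\Delta_{ij}\ge 0$ and $(1-\lambda)\,\Delta_{ij}\ge 0$, which have opposite signs. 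This works, and has the minor bonus of identifying exactly which cell degenerates, at the cost of the bookkeeping you anticipated; the paper's pigeonhole avoids the cases entirely. One small correction: in the $\lambda>0$ case you assert that $y_i$ is strictly between $y_j$ and $y_k$, which contradicts your own earlier (correct) observation that for $\lambda>0$ the point $y_i$ is \emph{not} between them --- the middle atom is $y_k$ when $0<\lambda<1$ and $y_j$ when $\lambda>1$, and it is precisely the middle cell that degenerates. Since the conclusion you then draw ($S_j$ or $S_k$ has empty interior) is the right one, this is a mislabeling rather than a gap.
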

\begin{proof}
Suppose that $H_{ik}$ and $H_{ij}$ coincide for some $j \neq k$.
Then the definition of $H_{jk}$ implies that it coincides with $H_{ik}$ and $H_{ij}$ as well.
The cells $S_i$, $S_j$, and $S_k$ are convex sets with positive $\mu$ (and hence positive Lebesgue) measure; therefore, they have non-empty interiors.
If we consider the two open halfspaces defined by the hyperplane $H_{ij} = H_{ik} = H_{jk}$, then there exist two of the cells---say, $S_i$ and $S_j$---whose interiors lie in the same open halfspace.
But this contradicts the fact that $2 \langle x, y_i - y_j\rangle - \|y_i\|^2 + \|y_j\|^2 - g_j^* + g_i^* > 0$ for all $x \in \mathrm{int}(S_i)$, and $2 \langle x, y_i - y_j\rangle - \|y_i\|^2 + \|y_j\|^2 - g_j^* + g_i^* < 0$ for all $x \in \mathrm{int}(S_j)$.
So $H_{ik}$ and $H_{ij}$ cannot coincide, as claimed.
\end{proof}

Let us fix an $a \geq 0$ sufficiently small and prove the continuity of $t \mapsto h_{ij}(t; a)$.
Given a nonnegative sequence $t_n \to 0$, consider
\begin{align*}
h_{ij}(t_n; a) - h_{ij}(0; a) & = \int_{H_{ij}(t_n; a)} \mu(x) d\mathcal H_{d-1}(x) - \int_{H_{ij}(0; a)} \mu(x) d\mathcal H_{d-1}(x) \\
& = \int_{ H_{ij}} (\mathbbold{1}[x + t_n v \in S_{ij}(a)] \mu(x + t_n v) - \mathbbold{1}[x \in S_{ij}(a)] \mu(x))d\mathcal H_{d-1}(x).
\end{align*}
Continuity of $\mu$ implies that $\mu(x + t_n) \to \mu(x)$ pointwise.
We will now show that $\mathbbold{1}[x + t_n v \in S_{ij}(a)] \to \mathbbold{1}[x \in S_{ij}(a)]$ for $\mathcal H_{d-1}$-almost every $x$.
First, since $S_{ij}(a)$ is closed, if $x \notin S_{ij}(a)$ then $x \notin S_{ij}(a) - t_n v$ for all $t_n$ sufficiently close to $0$.
Thus, $\limsup_{n \to \infty} \mathbbold{1}[x + t_n v \in S_{ij}(a)] \leq \mathbbold{1}[x \in S_{ij}(a)]$.

On the other hand, the set $S_{ij}(a)$ is a convex set defined by the constraints
\begin{align*}
2 \langle x, y_i - y_j\rangle - \|y_i\|^2 + \|y_j\|^2 - g_j^* + g_i^* & \geq 0\\
2 \langle x, y_i - y_k\rangle - \|y_i\|^2 + \|y_k\|^2 - g_k^* + g_i^* & \geq a \quad \quad \forall k \neq i ,j\,.
\end{align*}
By \cref{hyperplanes}, for all $a$ sufficiently small and all $k \neq i, j$, the intersection of $H_{ij}$ and $\{x \in \R^d: 2 \langle x, y_i - y_k\rangle - \|y_i\|^2 + \|y_k\|^2 - g_k^* + g_i^* = a\}$ has codimension at least $2$.
Therefore, for $\mathcal H_{d-1}$-almost every $x \in S_{ij}(a) \cap H_{ij}$,
\begin{align*}
2 \langle x, y_i - y_j\rangle - \|y_i\|^2 + \|y_j\|^2 - g_j^* + g_i^* & = 0 \\
2 \langle x, y_i - y_k\rangle - \|y_i\|^2 + \|y_k\|^2 - g_k^* + g_i^* & > a \quad \quad \forall k \neq i ,j\,.
\end{align*}
For such $x$, we therefore have that $x + t_n v \in S_{ij}(a)$ for $t_n$ sufficiently close to $0$, and $\liminf_{n \to \infty} \mathbbold{1}[x + t_n v \in S_{ij}(a)] \geq \mathbbold{1}[x \in S_{ij}(a)]$.
Therefore, $\mathbbold{1}[x + t_n v \in S_{ij}(a)] \to \mathbbold{1}[x \in S_{ij}(a)]$ for $\mathcal H_{d-1}$-almost every $x$.

Since $\mu$ is dominated along hyperplanes, $\mathbbold{1}[x + t_n v \in S_{ij}(a)] \mu(x + t_n v) - \mathbbold{1}[x \in S_{ij}(a)] \mu(x)$ is dominated by an integrable function on $H_{ij}$, and the claim follows.

The second argument is simpler: given a sequence $a_n \to 0$, we have
\begin{equation*}
h_{ij}(0; a_n) - h_{ij}(0; 0) = \int_{H_{ij}} (\mathbbold{1}[x \in S_{ij}(a_n)] - \mathbbold{1}[x \in S_{i}]) \mu(x) d \mathcal H_{d-1}(x)\,.
\end{equation*}
Since $S_{ij}(a_n) \subseteq S_i$, it is clear that $\limsup_{n \to \infty}\mathbbold{1}[x \in S_{ij}(a_n)] \leq \mathbbold{1}[x \in S_{i}]$.
And as above, $\mathcal H_{d-1}$-almost every $x \in S_{ij} \cap H_{ij}$ satisfies
\begin{align*}
2 \langle x, y_i - y_j\rangle - \|y_i\|^2 + \|y_j\|^2 - g_j^* + g_i^* & = 0 \\
2 \langle x, y_i - y_k\rangle - \|y_i\|^2 + \|y_k\|^2 - g_k^* + g_i^* & > 0 \quad \quad \forall k \neq i ,j\,.
\end{align*}
and for these $x$, $\liminf_{n \to \infty}\mathbbold{1}[x \in S_{ij}(a_n)] \geq \mathbbold{1}[x \in S_{i}]$.
This proves the claim.
\qed

\subsection{Proof of \cref{lem:connected}}
That $h_{ij}(0) = h_{ji}(0)$ follows from the fact that $H_{ij}(0) = H_{ji}(0) = S_i \cap S_j$.

Now, we show that the graph with edge set $\{(i, j): h_{ij}(0) > 0\}$ is connected.
Since $\mu$ is positive on the interior of its support, if $h_{ij}(0) = 0$, then $\mathrm{int}(\mathrm{supp}(\mu)) \cap (S_i \cap S_j)$ has zero $\mathcal H_{d-1}$ measure.
By~\cite[Lemma 49]{merigot2020optimal}, this implies that the set
\begin{equation*}
Z := \mathrm{int}(\mathrm{supp}(\mu)) \setminus \left( \bigcup_{ij \colon h_{ij}(0) = 0} S_i \cap S_j\right)
\end{equation*}
is path connected.

Now, suppose that the graph has $K$ connected components.
For each component $C_k \subseteq [n]$, let
\begin{equation*}
Z_k = \bigcup_{i \in C_k} (Z \cap S_i)\,.
\end{equation*}
Since each cell $S_i$ is closed and has positive $\mu$ mass, each $Z_k$ is nonempty and closed in the subspace topology on $Z$.
Moreover, they are disjoint by the definition of $Z$.
Therefore the $Z_k$ form a non-empty, closed partition of the connected set $Z$, so $K = 1$.
\qed

\footnotesize
\bibliographystyle{bib_stuff/IEEEannot}
\bibliography{bib_stuff/annot}

\normalsize

\end{document}